\newtheorem{theorem}{Theorem}
\newtheorem{lemma}[theorem]{Lemma}
\newtheorem{proposition}[theorem]{Proposition}
\newtheorem{corollary}[theorem]{Corollary}
\newtheorem{remark}[theorem]{Remark}
\begin{document}

\title{Classification of simple $\mathfrak{q}_{2}$-supermodules}
\author{Volodymyr Mazorchuk}
\date{\today}
\begin{abstract}
We classify all simple supermodules over the queer Lie superalgebra
$\mathfrak{q}_{2}$ up to classification of equivalence classes of
irreducible elements in a certain Euclidean ring.   
\end{abstract}

\maketitle

\section{Introduction and description of the results}\label{s1} 

The Lie algebra $\mathfrak{sl}_2(\mathbb{C})$ is the only complex
semisimple finite dimensional Lie algebra for which all simple 
(not necessarily  finite dimensional) modules are classified
(by Block, see \cite{Bl}). This classification was later extended 
to all generalized Weyl algebras by Bavula, see \cite{Ba}. The approach
via generalized Weyl algebra further allowed to obtain a classification  
of all supermodules over the Lie superalgebra $\mathfrak{osp}(1,2)$, 
see \cite{BO}. All the above classifications are given up to classification 
of  equivalence classes of irreducible elements in a certain Euclidean ring
(see \cite{Ba} or \cite[Chapter~6]{Ma} for details).

The queer Lie superalgebra $\mathfrak{q}_{2}$ is another Lie superalgebra,
which is closely related to the Lie algebra $\mathfrak{sl}_2(\mathbb{C})$.
It would be more correct to say that $\mathfrak{q}_{2}$ is closely related 
to the Lie algebra $\mathfrak{gl}_2(\mathbb{C})$. Namely, the superalgebra
$\mathfrak{q}_{2}$ can be regarded a a kind of a ``super-doubling'' of
the algebra $\mathfrak{gl}_2(\mathbb{C})$ in the sense that 
$\mathfrak{q}_{2}$ is a direct sum of one even and one
odd copy of $\mathfrak{gl}_2(\mathbb{C})$. Although all queer Lie
superalgebras  $\mathfrak{q}_{n}$ are classical, their properties 
are rather  different from the properties of other classical 
Lie superalgebras. For example, the Cartan subsuperalgebra of 
$\mathfrak{q}_{n}$ (in particular, of $\mathfrak{q}_{2}$) is not commutative,
which makes the corresponding theory of weight supermodules more complicated, 
but also more interesting. In comparison to Lie algebras, there are 
several types of degenerations in the representation theory of 
Lie superalgebras, for example there are atypical, typical and 
strongly typical types of supermodules, which are also subdivided into
regular and singular subtypes. All these degenerations make representation
theory of Lie superalgebras much more complicated, but again, more interesting. 
Various classes of representations of 
$\mathfrak{q}_{n}$ were studied in \cite{Pe2,PS,BK,Br,Go2,Fr,FM,Or}.

As $\mathfrak{gl}_2(\mathbb{C})$ is a direct sum of 
$\mathfrak{sl}_2(\mathbb{C})$ and a one-dimensional central subalgebra,
the classification of simple $\mathfrak{sl}_2(\mathbb{C})$-modules
extends to a classification of simple $\mathfrak{gl}_2(\mathbb{C})$-modules
in the natural way. This naturally raises the question whether the 
classification of simple $\mathfrak{gl}_2(\mathbb{C})$-modules can be
extended to a classification of simple $\mathfrak{q}_{2}$-supermodules.
A strong evidence for a close connection between simple (super)modules over 
(any classical) Lie superalgebra and its even Lie subalgebra was obtained
by Penkov in \cite{Pe}. In the present paper we address this question
and prove the following main result, which, in particular, 
reduces the classification of  all simple $\mathfrak{q}_{2}$-supermodules 
to that of all simple $\mathfrak{gl}_2$-modules
(see Section~\ref{s2} for the setup):

\begin{theorem}\label{mthm1}
\begin{enumerate}[(i)]
\item\label{mthm1.1} Every simple  $\mathfrak{q}_{2}$-supermodule
has finite length as a $\mathfrak{gl}_2$-module.
\item\label{mthm1.2} For every primitive ideal $\mathcal{I}$ of
$U(\mathfrak{q}_{2})$ there is an explicitly given  primitive 
ideal $\mathtt{I}$ of $U(\mathfrak{gl}_2)$ and an explicitly given 
$U(\mathfrak{q}_{2})\text{-}U(\mathfrak{gl}_2)$-bimodule $B$ such 
that the functor $B\otimes_{U(\mathfrak{gl}_2)}{}_-$ induces a bijection
from the set of isomorphism classes of simple 
$\mathfrak{gl}_2$-modules annihilated by $\mathtt{I}$ to the set of
isomorphism classes (up to parity change) of simple 
$\mathfrak{q}_2$-supermodules annihilated by $\mathcal{I}$. 
\item\label{mthm1.3} For every strongly typical or atypical simple 
$\mathfrak{q}_{2}$-supermodule $N$ we have $N\not\cong \Pi N$, where 
$\Pi$ denotes the parity change functor. For every typical simple 
$\mathfrak{q}_{2}$-supermodule $N$, which is not strongly typical,
we have $N\cong \Pi N$.
\end{enumerate}
\end{theorem}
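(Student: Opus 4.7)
The plan is to reduce the classification to a bimodule picture: for each primitive ideal $\mathcal{I}$ of $U(\mathfrak{q}_{2})$, from its central character $\chi$ I would construct a matching primitive ideal $\mathtt{I}$ of $U(\mathfrak{gl}_{2})$ and a $U(\mathfrak{q}_{2})$-$U(\mathfrak{gl}_{2})$-bimodule $B$ realizing a graded Morita-type equivalence between $U(\mathfrak{q}_{2})/\mathcal{I}$ and $U(\mathfrak{gl}_{2})/\mathtt{I}$. The two key ingredients are the PBW decomposition $U(\mathfrak{q}_{2})\cong U(\mathfrak{gl}_{2})\otimes\Lambda(\mathfrak{q}_{2}^{\bar{1}})$ (free of rank $16$ over $U(\mathfrak{gl}_{2})$), together with Sergeev's description of $Z(U(\mathfrak{q}_{2}))$: an even ``Casimir'' $C$ and an odd ``Casimir'' $T$, with $T^{2}$ expressible as a polynomial in $C$ and the $\mathfrak{gl}_{2}$-central generator.

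For part (ii), I would read off $\mathtt{I}$ from $\chi$ via the Harish-Chandra isomorphism, using $\chi(T^{2})$ to pin down the correct Weyl-orbit representative. The bimodule $B$ is then defined as the subbimodule of $U(\mathfrak{q}_{2})/\mathcal{I}$ on which the right $U(\mathfrak{gl}_{2})$-action factors through $U(\mathfrak{gl}_{2})/\mathtt{I}$, equivalently the image of the appropriate central idempotent acting from the right. By PBW, $B$ is free of finite rank as a right $U(\mathfrak{gl}_{2})/\mathtt{I}$-module; moreover it is a Harish-Chandra bimodule for $\mathfrak{gl}_{2}$ because the adjoint $\mathfrak{gl}_{2}$-action on $U(\mathfrak{q}_{2})/\mathcal{I}$ is locally finite. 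The functor $B\otimes_{U(\mathfrak{gl}_{2})}{-}$ is exact; preservation of simplicity and bijectivity follow from the adjunction with $\mathrm{Hom}_{\mathfrak{q}_{2}}(B,-)$, whose unit and counit are checked to be isomorphisms by an explicit computation on a cyclic generator. Part (i) is then a consequence of the Harish-Chandra bimodule property of $B$: tensoring such a bimodule, finite over its central quotients, with a simple $\mathfrak{gl}_{2}$-module produces a $\mathfrak{gl}_{2}$-module of finite length bounded by the rank of $B$.

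For part (iii), by super-Schur's lemma $N\cong\Pi N$ iff $\mathrm{End}_{\mathfrak{q}_{2}}(N)_{\bar{1}}\neq 0$. Through the equivalence in (ii) this is transported to the question of whether $B$ admits an odd automorphism of right $U(\mathfrak{gl}_{2})/\mathtt{I}$-modules commuting with the left $U(\mathfrak{q}_{2})/\mathcal{I}$-action. The case analysis is driven by $\chi(T)$, $\chi(T^{2})$, and the atypicality of $\chi$: in the strongly typical case ($\chi(T^{2})\neq 0$ non-atypical) $T^{2}$ acts invertibly on $B$ and $B$ splits into two non-isomorphic graded halves, obstructing a grading-reversing symmetry and forcing $N\not\cong\Pi N$; in the typical-not-strongly-typical case $T^{2}$ vanishes on $B$ but a residual odd generator of $U(\mathfrak{q}_{2})/\mathcal{I}$ provides the required odd endomorphism, so $N\cong\Pi N$; in the atypical case one graded component of $B$ degenerates, again obstructing the odd symmetry and yielding $N\not\cong\Pi N$.

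I expect the main obstacle to be the construction of $B$ in the atypical case, where the rank-$16$ PBW bimodule does not descend freely to a right $U(\mathfrak{gl}_{2})/\mathtt{I}$-module of the expected rank, forcing a finer choice of subbimodule in order to retain freeness and the bijection in (ii). A secondary difficulty lies in the case analysis for (iii), specifically in identifying the residual odd generator that distinguishes the typical-not-strongly-typical case from the strongly typical case at the level of graded endomorphisms of $B$.
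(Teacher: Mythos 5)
Your proposal inverts the logical order of the paper in a way that creates a genuine circularity. You propose to establish (ii) first and then derive (i) from it, but the surjectivity half of the bijection in (ii) — that \emph{every} simple $\mathfrak{q}_2$-supermodule with annihilator $\mathcal{I}$ arises as $B\otimes_{U(\mathtt{g})}L$ — requires knowing that $\mathrm{Res}\,N$ has a simple $\mathtt{g}$-submodule $L$ to which the Harish-Chandra machinery can be anchored. This is exactly what (i) supplies (finite length implies artinian, hence a simple submodule exists). Your phrase ``unit and counit are checked to be isomorphisms by an explicit computation on a cyclic generator'' papers over precisely this point: the counit on an arbitrary simple $N$ cannot be computed cyclically without first controlling $\mathrm{Res}\,N$. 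The paper instead proves (i) \emph{directly and independently}: any simple $N$ (up to parity) embeds in some $\mathrm{Ind}\,L$ for a simple $\mathtt{g}$-module $L$ (Proposition~\ref{prop7}), and $\mathrm{Res}\circ\mathrm{Ind}\,L\cong\bigwedge\mathfrak{q}_{\overline{1}}\otimes L$ is holonomic because simple $\mathfrak{gl}_2$-modules are holonomic and tensoring with a finite-dimensional module preserves holonomicity; holonomic implies finite length.

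Your construction of $B$ as ``the image of the appropriate central idempotent acting on $U(\mathfrak{q}_2)/\mathcal{I}$ from the right'' does not match what is actually needed, and the freeness claim is false in general. The paper's bimodules are $\mathcal{L}_\lambda=\mathcal{L}(L^{\mathtt{g}}(\lambda),L(\mathcal{V}(\lambda)))$ and, for the singular typical and the $\lambda_1-\lambda_2=-1$ cases, $\mathcal{M}_\lambda=\mathcal{L}(L^{\mathtt{g}}(\lambda-\alpha),L(\mathcal{V}(\lambda)))$ — these are Harish-Chandra bimodules of locally finite homomorphisms, not images of central idempotents in $U(\mathfrak{q})/\mathcal{I}$. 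In the atypical case Lemma~\ref{lem11} shows $\mathcal{L}_\lambda\cong U(\mathfrak{q})/\mathcal{J}_\lambda$, where $\mathcal{J}_\lambda$ is only a $U(\mathfrak{q})$\text{-}$U(\mathtt{g})$-subbimodule, not a two-sided ideal; there is no central idempotent splitting here, and $\mathcal{L}_\lambda$ is not free over $U(\mathtt{g})/\mathtt{I}_\lambda$ of rank $16$. Moreover, proving that $B\otimes_{U(\mathtt{g})}{}_-$ preserves simplicity and is bijective is nontrivial homological work: the paper must compare $\mathcal{L}_\lambda$ with $\mathcal{L}'_\lambda=\mathcal{L}(M^{\mathtt{g}}(\lambda),L(\mathcal{V}(\lambda)))$ via Lemma~\ref{lem555} (showing the discrepancy is finite-dimensional and tensors to zero against an infinite-dimensional simple), and then do a genuine case-by-case analysis across atypical, typical regular nonintegral, typical regular integral, and typical singular weights (Propositions~\ref{prop10}, \ref{prop21}, \ref{prop31}, \ref{prop32}, \ref{prop41}), with the singular case only giving the rough $\mathfrak{gl}_2$-structure via translation through the wall. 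Your uniform bimodule picture cannot produce these distinctions. Your outline for (iii) is closer in spirit, though beware a sign/parity confusion: the distinguished element $T_{\mathfrak{q}}$ the paper uses in the strongly typical case lies in the \emph{even} part of the anticenter (commutes with even, anticommutes with odd), not an odd central element, and the atypical case uses the odd element $\overline{H}_1+\overline{H}_2$ (which squares to $H_1+H_2$ and is not central), while the non-strongly-typical case is handled via the $\mathfrak{h}$-module isomorphism $\mathcal{V}(\lambda)\cong\Pi\mathcal{V}(\lambda)$.
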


The $U(\mathfrak{q}_{2})\text{-}U(\mathfrak{gl}_2)$-bimodule $B$, 
which appears in the formulation of  Theorem~\ref{mthm1} is a 
Harish-Chandra $U(\mathfrak{q}_{2})\text{-}U(\mathfrak{gl}_2)$-bimodule.
Such bimodules are our main technical tool and a substantial part of
the paper is devoted to developing the corresponding techniques.
This also makes most of our arguments quite homological.
In addition to Theorem~\ref{mthm1} we also explicitly describe the
{\em rough structure} of all simple $\mathfrak{q}_{2}$-supermodules,
considered as $\mathfrak{gl}_2$-modules, in the sense of
\cite{KM,MS}. In the atypical and regular typical cases the rough 
structure coincides with the actual $\mathfrak{gl}_2$-module structure, 
whereas for singular typical supermodules the difference
between these two structures is a potential direct sum of finitely many
copies of one-dimensional $\mathfrak{gl}_2$-modules, which we are not
able to determine explicitly.

The structure of the paper is as follows: We collect all necessary
preliminaries, in particular on primitive ideals in 
$U(\mathfrak{q}_{2})$ and on Harish-Chandra bimodules,
in Section~\ref{s2}. In Section~\ref{s3} we prove
Theorem~\ref{mthm1} and describe the rough structure of 
simple $\mathfrak{q}_{2}$-supermodules. We also outline an
alternative approach to classification of simple weight supermodules
via a localized superalgebra. In Subsections~\ref{s4.10}--\ref{s4.12}
we extend Theorem~\ref{mthm1} to superalgebras $\mathfrak{pq}_2$,
$\mathfrak{sq}_2$ and $\mathfrak{psq}_2$. 
\vspace{2mm}

\noindent
{\bf Acknowledgments:} The work was partially supported by the
Swe\-dish Research Council. The author thanks Ken Brown for
useful discussions.
\vspace{2mm}

\section{The superalgebra $\mathfrak{q}_{2}$
and $\mathfrak{q}_{2}$-supermodules}\label{s2} 

\subsection{The superalgebra $\mathfrak{q}_{2}$}\label{s2.1} 

For all undefined notions we refere the reader to \cite{Fr}.
Let $\mathbb{Z}$, $\mathbb{N}$ and $\mathbb{N}_0$ denote the sets of 
all, positive  and nonnegative integers, respectively. Let $\Bbbk$ 
be an uncountable algebraically closed field of characteristic zero.
Denote by $\mathbf{i}\in \Bbbk$ a fixed square root of $-1$. Let
$\mathtt{g}=\mathfrak{gl}_2(\Bbbk)$ denote  the general linear Lie algebra 
of $2\times 2$  matrices over $\Bbbk$. The {\em queer Lie superalgebra} 
$\mathfrak{q}=\mathfrak{q}_2$ over $\Bbbk$ consists of all block matrices 
of the form 
\begin{displaymath}
\mathtt{M}(A,B)=\left(\begin{array}{cc}A&B\\B&A\end{array}\right),\quad
A,B\in \mathfrak{gl}_2.
\end{displaymath}
The even and the odd spaces $\mathfrak{q}_{\overline{0}}$ and
$\mathfrak{q}_{\overline{1}}$ consist of the  matrices $\mathtt{M}(A,0)$
and $\mathtt{M}(0,B)$, respectively, and we have 
$\mathfrak{q}=\mathfrak{q}_{\overline{0}}\oplus\mathfrak{q}_{\overline{1}}$.
For a homogeneous element $X\in \mathfrak{q}$ we denote by 
$\overline{X}\in\mathbb{Z}/2\mathbb{Z}$ the degree of $X$. 
Then the Lie superbracket in $\mathfrak{q}$ is given by  
$[X,Y]=XY-(-1)^{\overline{X}\overline{Y}}YX$, where $X,Y\in \mathfrak{q}$ 
are homogeneous.

For $i,j\in\{1,2\}$ let $E_{ij}\in \mathfrak{gl}_n$ denote the
corresponding matrix unit. Set 
\begin{gather*}
E=\mathtt{M}(E_{12},0), F=\mathtt{M}(E_{21},0), 
H_1=\mathtt{M}(E_{11},0), H_2=\mathtt{M}(E_{22},0);\\
\overline{E}=\mathtt{M}(0,E_{12}), \overline{F}=\mathtt{M}(0,E_{21}), \overline{H}_1=\mathtt{M}(0,E_{11}), \overline{H}_2=\mathtt{M}(0,E_{22}).
\end{gather*}
We have the {\em Cartan subsuperalgebra} $\mathfrak{h}$ of 
$\mathfrak{q}$, which is the linear span of  $H_1$, $H_2$,
$\overline{H}_1$ and $\overline{H}_2$.  The superalgebra 
$\mathfrak{h}$ inherits from  $\mathfrak{q}$ the  decomposition 
$\mathfrak{h}= \mathfrak{h}_{\overline{0}}\oplus\mathfrak{h}_{\overline{1}}$.
Similarly we define the subsuperalgebra
$\mathfrak{n}= \mathfrak{n}_{\overline{0}}\oplus\mathfrak{n}_{\overline{1}}$,
which is generated by $E$ (it spans $\mathfrak{n}_{\overline{0}}$) 
and $\overline{E}$ (it spans $\mathfrak{n}_{\overline{1}}$); and the 
subsuperalgebra
$\mathfrak{m}= \mathfrak{m}_{\overline{0}}\oplus\mathfrak{m}_{\overline{1}}$,
which is generated by $F$ (it spans $\mathfrak{m}_{\overline{0}}$) 
and $\overline{F}$ (it spans $\mathfrak{m}_{\overline{1}}$). This leads to 
the standard triangular decomposition 
$\mathfrak{q}=\mathfrak{m}\oplus\mathfrak{h}\oplus\mathfrak{n}$.
The even Lie subalgebra $\mathfrak{q}_{\overline{0}}$, which is the linear 
span of $E$, $F$, $H_1$ and $H_2$, is identified with the Lie 
algebra $\mathtt{g}$ in the obvious way.

Let $U(\mathfrak{q})$ and $U(\mathtt{g})$ denote the universal enveloping
(super)algebras of $\mathfrak{q}$ and $\mathtt{g}$, respectively.
Let $Z(\mathfrak{q})$ and $Z(\mathtt{g})$ denote the centers of 
the algebras $U(\mathfrak{q})$ and $U(\mathtt{g})$, respectively.
The PBW theorem for Lie superalgebras (see \cite{Ro}) asserts that
$U(\mathfrak{q})$ is free of finite rank over $U(\mathtt{g})$ both 
as a right and as a left module with the basis 
\begin{displaymath}
\big\{a^{\varepsilon_1}b^{\varepsilon_2}b^{\varepsilon_3}
d^{\varepsilon_4}:
\varepsilon_1,\varepsilon_2,\varepsilon_3,\varepsilon_4\in\{0,1\}\big\},
\end{displaymath}
where $\{a,b,c,d\}=\{\overline{E},\overline{F},\overline{H}_1,
\overline{H}_2\}$.

\subsection{Supermodules}\label{s2.2} 

If $\mathfrak{a}=\mathfrak{a}_{\overline{0}}\oplus
\mathfrak{a}_{\overline{1}}$ is a Lie superalgebra over $\Bbbk$, 
then an $\mathfrak{a}$ {\em supermodule} is a $\Bbbk$-vector  superspace
$V=V_{\overline{0}}\oplus V_{\overline{1}}$ and a Lie superalgebra
homomorphism from $\mathfrak{a}$ to the Lie superalgebra of 
all linear operators on $V$. 

A homomorphism $\varphi:V\to W$ of two $\mathfrak{a}$-supermodules
$V$ and $W$ is a homogeneous linear map {\em of degree zero} from
$V$ to $W$, which intertwines the actions of $\mathfrak{a}$ on 
$V$ and $W$. We denote by $\mathfrak{a}\text{-}\mathrm{sMod}$ the category
of all $\mathfrak{a}$-supermodules with the above defined morphisms.
We  will use the standard notation and denote morphisms in 
$\mathfrak{a}\text{-}\mathrm{sMod}$ by $\mathrm{Hom}_{\mathfrak{a}}$.

The category $\mathfrak{a}\text{-}\mathrm{sMod}$ is abelian with 
usual kernels and cokernels. Simple objects in
$\mathfrak{a}\text{-}\mathrm{sMod}$ are {\em simple} 
$\mathfrak{a}$-supermodules, that is $\mathfrak{a}$-\-su\-per\-mo\-du\-les,
which do not have proper subsupermodules. An example of a simple
$\mathfrak{a}$-supermodule is the {\em trivial} supermodule
$\Bbbk=\Bbbk_{\overline{0}}$, which is defined using the zero action
of $\mathfrak{a}$.

Let $\Pi$ denote the endofunctor of $\mathfrak{a}\text{-}\mathrm{sMod}$,
which changes the parity. For example, if $\Bbbk$ is the trivial 
$\mathfrak{a}$-supermodule from the previous paragraph (which is purely 
even), then $\Pi\Bbbk$ is purely odd. In particular, $\Bbbk$ and 
$\Pi\Bbbk$ are not isomorphic in $\mathfrak{a}\text{-}\mathrm{sMod}$.

\subsection{$\mathfrak{h}$-supermodules}\label{s2.3} 

Elements in $\mathfrak{h}^*_{\overline{0}}$ are called {\em weights}
and are written $\lambda=(\lambda_1,\lambda_2)$ with respect to 
the basis $(\varepsilon_1,\varepsilon_2)$ of $\mathfrak{h}^*_{\overline{0}}$,
which is dual to the basis $(H_1,H_2)$ of $\mathfrak{h}_{\overline{0}}$. 
We set $\alpha=(1,-1)\in\mathfrak{h}^*_{\overline{0}}$ 
(the positive root of $\mathfrak{q}$).

For every $z\in\Bbbk$ we fix some $\sqrt{z}$.
For $\lambda=(\lambda_1,\lambda_2)\in \mathfrak{h}^*_{\overline{0}}$
we define the $\mathfrak{h}$-supermodule $\mathcal{V}(\lambda)$ as
follows: The supermodule $\mathcal{V}(0)$ is the trivial supermodule $\Bbbk$.
If $\lambda\neq 0$, then the supermodule $\mathcal{V}(\lambda)$ has 
a one-dimensional even space, spanned by $v$, and a 
one-dimensional odd space, spanned by $\overline{v}$, such that the action
of the elements $H_1$, $H_2$, $\overline{H}_1$, $\overline{H}_1$ in the
basis $\{v,\overline{v}\}$ is given by the following formulae:
\begin{gather*}
H_1(v)=\lambda_1 v;\, H_2(v)=\lambda_2 v;\,
\overline{H}_1(v)=\sqrt{\lambda_1}\overline{v};\,
\overline{H}_2(v)=-\mathbf{i}\sqrt{\lambda_2}\overline{v};\\
H_1(\overline{v})=\lambda_1 \overline{v};\, 
H_2(\overline{v})=\lambda_2 \overline{v};\,
\overline{H}_1(\overline{v})=\sqrt{\lambda_1}v;\,
\overline{H}_2(\overline{v})=\mathbf{i}\sqrt{\lambda_2}v.
\end{gather*}
This is usually depicted as follows:
\begin{displaymath}
\xymatrix{ 
v \ar@/^1pc/[rrrrrr]^{\overline{H}_1=\sqrt{\lambda_1},\,\,\,
\overline{H}_2=-\mathbf{i}\sqrt{\lambda_2}}
\ar@(ld,lu)[]^{\begin{array}{c}H_1=\lambda_1\\H_2=\lambda_2\end{array}}
&&&&&& 
\overline{v}
\ar@/^1pc/[llllll]^{\overline{H}_1=\sqrt{\lambda_1},\,\,\,
\overline{H}_2=\mathbf{i}\sqrt{\lambda_2}}
\ar@(rd,ru)[]_{\begin{array}{c}H_1=\lambda_1\\H_2=\lambda_2\end{array}}
}
\end{displaymath}

\begin{lemma}\label{lem1} 
\begin{enumerate}[(i)]
\item \label{lem1.1} Every simple  $\mathfrak{h}$-supermodule
is isomorphic to either  $\mathcal{V}(\lambda)$ or
$\Pi \mathcal{V}(\lambda)$ for some 
$\lambda\in \mathfrak{h}^*_{\overline{0}}$.
\item \label{lem1.2} For $\lambda\in \mathfrak{h}^*_{\overline{0}}$
we have $\mathcal{V}(\lambda)\cong\Pi\mathcal{V}(\lambda)$
if and only if $\lambda_1\lambda_2=0$ and $\lambda\neq 0$.
\end{enumerate}
\end{lemma}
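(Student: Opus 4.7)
The plan for part~\eqref{lem1.1} is to reduce the action of the even Cartan generators to scalars and then exploit the resulting Clifford-like structure. A direct computation in $U(\mathfrak{h})$ yields $[H_i,H_j]=[H_i,\overline{H}_j]=0$, $\overline{H}_i^2=H_i$, and $\overline{H}_1\overline{H}_2+\overline{H}_2\overline{H}_1=0$. In particular, $H_1$ and $H_2$ are central in $U(\mathfrak{h})$ and act by even $\mathfrak{h}$-endomorphisms on any simple $\mathfrak{h}$-supermodule $V$; the super version of Schur's lemma (valid since $\Bbbk$ is uncountable algebraically closed and $U(\mathfrak{h})$ is countable-dimensional) then forces $H_i$ to act as a scalar $\lambda_i\in\Bbbk$. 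Consequently $\overline{H}_i^2=\lambda_i$ on $V$.

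Next I would split into cases on $\lambda$. When $\lambda=0$, both odd generators have square zero, so $\ker\overline{H}_1\cap\ker\overline{H}_2$ is a nonzero subsupermodule of $V$ and hence equals $V$ by simplicity; thus $V$ is one-dimensional trivial and coincides with $\mathcal{V}(0)$ or $\Pi\mathcal{V}(0)$. When $\lambda\neq 0$, I may assume $\lambda_1\neq 0$ (the other case being symmetric), so $\overline{H}_1$ is invertible and $V_{\overline{0}}\neq 0$. I then consider $\theta:=\overline{H}_1\overline{H}_2$ as an even endomorphism of $V_{\overline{0}}$: it satisfies $\theta^2=-\lambda_1\lambda_2$, so over the algebraically closed field $\Bbbk$ it admits an eigenvector $v$ with eigenvalue $\mu\in\{\pm\mathbf{i}\sqrt{\lambda_1\lambda_2}\}$. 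Setting $\overline{v}:=\overline{H}_1(v)/\sqrt{\lambda_1}$, the span of $\{v,\overline{v}\}$ is an $\mathfrak{h}$-stable subsupermodule on which the Cartan generators act by precisely the defining formulas of $\mathcal{V}(\lambda)$ or $\Pi\mathcal{V}(\lambda)$, depending on the sign of $\mu$; simplicity then identifies $V$ with this span.

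For part~\eqref{lem1.2}, I would analyze the existence of an isomorphism $\varphi\colon\mathcal{V}(\lambda)\to\Pi\mathcal{V}(\lambda)$ directly. Because $\varphi$ must be of degree zero, it is determined by two scalars $c,c'\in\Bbbk$ giving the images of the even and odd basis vectors. Intertwining $\varphi$ with $\overline{H}_1$ and $\overline{H}_2$ yields the two equations $\sqrt{\lambda_1}(c-c')=0$ and $\sqrt{\lambda_2}(c+c')=0$, which admit a nonzero solution precisely when $\lambda_1=0$ or $\lambda_2=0$. Combining this with the observation that $\mathcal{V}(0)$ is purely even while $\Pi\mathcal{V}(0)$ is purely odd (so the case $\lambda=0$ must be excluded) yields the stated criterion.

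The main technical obstacle is the consistent book-keeping of signs and of the chosen square roots $\sqrt{\lambda_i}$; once those conventions are fixed, all computations are routine. A minor subtlety is the invocation of the super Schur lemma, but this is standard given the uncountability of $\Bbbk$ assumed throughout the paper.
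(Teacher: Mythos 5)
Your proof is correct and, modulo exposition, follows the same route the paper takes: the paper does not give an argument for Lemma~\ref{lem1} at all, but instead declares it ``well-known'' from the theory of Clifford algebras and defers to~\cite[Appendix~A]{Go2} or~\cite{Or} for a ``direct computational approach''; what you have written is exactly such a direct computation. The key steps -- using Dixmier/Quillen (super) Schur to make $H_1,H_2$ scalars, observing the Clifford relations $\overline{H}_i^2=H_i$, $\overline{H}_1\overline{H}_2+\overline{H}_2\overline{H}_1=0$, splitting into $\lambda=0$ versus $\lambda\neq 0$, diagonalizing $\theta=\overline{H}_1\overline{H}_2$ (which satisfies $\theta^2=-\lambda_1\lambda_2$) on $V_{\overline{0}}$, and then reading off the two scalar equations from the intertwining conditions in part~(ii) -- are all sound; your case analysis correctly recovers both $\mathcal{V}(\lambda)$ and $\Pi\mathcal{V}(\lambda)$ depending on the sign of the $\theta$-eigenvalue. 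One small point worth spelling out if you were to formalize the argument: in the ``symmetric'' case $\lambda_1=0\neq\lambda_2$ one needs the extra observation that $\overline{H}_1v=0$ (since otherwise $\langle\overline{H}_1v\rangle$ would be a proper subsupermodule), but this follows from the same nilpotency trick you already use in the $\lambda=0$ case, so the gap is cosmetic rather than substantive.
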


\begin{proof}
This is well-known and follows directly from the theory of Clifford 
algebras. See for example \cite[Appendix~A]{Go2} or, alternatively,
\cite{Or} for full details and a direct computational approach.
\end{proof}

\subsection{$\mathfrak{q}_{2}$-supermodules}\label{s2.4} 

A $\mathfrak{q}$-supermodule $V$ is called {\em weight} provided
that the action of $\mathfrak{h}_{\overline{0}}$ on $V$ is 
diagonalizable. This means that 
\begin{displaymath}
V=\bigoplus_{\lambda\in\mathfrak{h}^*_{\overline{0}}}V_{\lambda},\quad
\text{ where }\quad
V_{\lambda}=\{v\in V: H_1(v)=\lambda_1 v,\,H_2(v)=\lambda_2 v \}.
\end{displaymath}
Each $V_{\lambda}$ is obviously an $\mathfrak{h}$-subsupermodule.
If $V_{\lambda}$ is finite dimensional, then from Lemma~\ref{lem1}
it follows that $V_{\lambda}$ has a finite composition series with 
subquotients isomorphic to either $\mathcal{V}(\lambda)$ or 
$\Pi \mathcal{V}(\lambda)$. The category of all weight 
$\mathfrak{q}$-supermodules is obvious closed with respect to 
taking any subquotients and direct sums.

For a simple $\mathfrak{h}$-supermodule $V$ set $\mathfrak{n}V=0$
and define the {\em Verma} $\mathfrak{q}$-supermodule
$M(V)$ as follows:
\begin{displaymath}
M(V)=U(\mathfrak{q})\bigotimes_{U(\mathfrak{h}\oplus\mathfrak{n})}V.
\end{displaymath}
The supermodule $M(V)$ is a weight supermodule, it has 
a unique simple top, which we will denote by $L(V)$. From the
definitions it follows that $\Pi M(V)\cong M(\Pi V)$ and
$\Pi L(V)\cong L(\Pi V)$.

A weight $\lambda$ is called 
\begin{itemize}
\item {\em integral} provided that $\lambda_1-\lambda_2\in\mathbb{Z}$;
\item {\em strongly integral} provided that $\lambda_1,\lambda_2\in\mathbb{Z}$;
\item {\em dominant} provided that $\lambda_1-\lambda_2\in\mathbb{N}$;
\item {\em regular} provided that $\lambda_1\neq \lambda_2$;
\item {\em typical} provided that $\lambda_1+\lambda_2\neq 0$;
\item {\em strongly typical} provided that 
it is typical and $\lambda_1,\lambda_2\neq 0$.
\end{itemize}

\begin{lemma}\label{lem2} 
The supermodules $L(\mathcal{V}(\lambda))$ and $\Pi L(\mathcal{V}(\lambda))$,
where  $\lambda\in \mathfrak{h}^*_{\overline{0}}$ is either zero or 
dominant, constitute an exhaustive list of all simple 
finite dimensional  $\mathfrak{h}$-supermodules.
\end{lemma}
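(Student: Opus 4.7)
\noindent\textbf{Proof plan for Lemma~\ref{lem2}.}

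The plan is to run standard highest-weight theory for the triangular decomposition $\mathfrak{q}=\mathfrak{m}\oplus\mathfrak{h}\oplus\mathfrak{n}$ and then match the answer against Lemma~\ref{lem1}. (I will read the statement as concerning simple finite-dimensional $\mathfrak{q}$-supermodules; as written, these are the only $L(V)$'s defined so far.) Let $V$ be a simple finite-dimensional $\mathfrak{q}$-supermodule. First I would show $V$ is a weight supermodule: because $\Bbbk$ is algebraically closed and $V$ is finite-dimensional, $H_1$ and $H_2$ decompose $V$ into joint generalized eigenspaces; the fact that $H_1,H_2$ are ad-diagonalizable on $\mathfrak{q}$ with integer eigenvalues, together with a standard Fitting argument, upgrades this to an honest weight decomposition $V=\bigoplus_\lambda V_\lambda$.

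Next I would pick a weight $\lambda$ of $V$ maximal with respect to $\mu\geq\nu\Leftrightarrow\mu-\nu\in\mathbb{N}_0\alpha$. By maximality $\mathfrak{n}_{\overline 0}V_\lambda=\mathfrak{n}_{\overline 1}V_\lambda=0$, so $V_\lambda$ is an $\mathfrak{h}$-subsupermodule; it therefore contains, by Lemma~\ref{lem1}, a simple $\mathfrak{h}$-subsupermodule $W$ isomorphic either to $\mathcal{V}(\lambda)$ or to $\Pi\mathcal{V}(\lambda)$. Frobenius reciprocity turns the inclusion $W\hookrightarrow V$ into a nonzero map $M(W)\twoheadrightarrow V$, and simplicity of $V$ forces $V\cong L(W)$. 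Using $\Pi M(\mathcal{V}(\lambda))\cong M(\Pi\mathcal{V}(\lambda))$ this gives $V\cong L(\mathcal{V}(\lambda))$ or $V\cong\Pi L(\mathcal{V}(\lambda))$.

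It remains to identify which $\lambda$'s occur. For one direction: restrict $L(\mathcal{V}(\lambda))$ to $\mathfrak{q}_{\overline 0}=\mathtt{g}$ and look at the vector $v\in\mathcal{V}(\lambda)$. It is annihilated by $E$ and has $\mathfrak{h}_{\overline 0}$-weight $(\lambda_1,\lambda_2)$, so $U(\mathtt{g})v$ is a highest-weight $\mathtt{g}$-module of highest weight $\lambda$; finite-dimensionality forces $\lambda_1-\lambda_2\in\mathbb{N}_0$, i.e.\ $\lambda=0$ or $\lambda$ dominant. For the converse I would show that when $\lambda$ is dominant, $L(\mathcal{V}(\lambda))$ is genuinely finite-dimensional. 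The PBW theorem quoted in \S\ref{s2.1} presents $U(\mathfrak{q})$ as a free $U(\mathtt{g})$-module of finite rank, so $M(\mathcal{V}(\lambda))$ is a finitely generated $\mathtt{g}$-module whose generators sit in finitely many $\mathtt{g}$-weight spaces; hence it has finite length as a $\mathtt{g}$-module, with composition factors that are $\mathtt{g}$-highest-weight modules of highest weight $\lambda$ or $\lambda-\alpha$. In the unique simple quotient $L(\mathcal{V}(\lambda))$ the image of each such factor is itself $\mathtt{g}$-simple, hence finite-dimensional by dominance of $\lambda$ (noting $\lambda-\alpha$ is still dominant whenever $\lambda_1-\lambda_2\geq 1$, and the factors of highest weight $\lambda-\alpha$ do not appear when $\lambda_1=\lambda_2$).

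The main obstacle is exactly this last step: upgrading "$L(\mathcal{V}(\lambda))$ is a quotient of a module with finite $\mathtt{g}$-length" to "each composition factor of that $\mathtt{g}$-filtration is dominant, hence finite-dimensional in the simple quotient". The cleanest way I see is to exhibit explicitly the maximal submodule of $M(\mathcal{V}(\lambda))$, using that $M(\mathcal{V}(\lambda))$ is generated over $U(\mathfrak{m})\cong\Bbbk[F]\otimes\Lambda(\overline F)$ by the two-dimensional $\mathcal{V}(\lambda)$, and checking that $F^{\lambda_1-\lambda_2+1}v$ and its $\overline F$-partner generate a submodule whose quotient is finite-dimensional. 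Alternatively one can cite the construction of finite-dimensional simple $\mathfrak{q}_n$-supermodules from the literature listed in the introduction, but the direct verification is short.
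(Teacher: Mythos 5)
You correctly read the statement as being about simple finite dimensional $\mathfrak{q}$-supermodules (the paper's ``$\mathfrak{h}$'' here is a typo, since $L(\mathcal{V}(\lambda))$ is only defined as a $\mathfrak{q}$-supermodule). The paper supplies no proof of its own---it cites \cite{Pe2} and \cite{Or}---so your direct highest-weight argument is a genuinely different route. The first half of your plan (weight decomposition, maximal weight $\lambda$, inclusion of a simple $\mathfrak{h}$-subsupermodule of $V_\lambda$ via Lemma~\ref{lem1}, Frobenius reciprocity, hence $V\cong L(W)$) is sound and standard.

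However, your necessity step is slightly off. The paper defines \emph{dominant} as $\lambda_1-\lambda_2\in\mathbb{N}=\{1,2,\dots\}$, so ``zero or dominant'' excludes the weights $\lambda=(a,a)$ with $a\neq 0$; for these Lemma~\ref{lem4}\eqref{lem4.5} gives $L(\mathcal{V}(\lambda))=M(\mathcal{V}(\lambda))$, which is infinite dimensional. Your restriction-to-$\mathtt{g}$ argument (looking at $U(\mathtt{g})v$) only yields $\lambda_1-\lambda_2\geq 0$; it does not rule out $\lambda_1=\lambda_2\neq 0$. To close this you need an extra observation, e.g. that for $\lambda=(a,a)$, $a\neq 0$, the vector $\overline F(v)$ has nonzero image in $L(\mathcal{V}(\lambda))$ of $\mathtt{g}$-weight $(a-1,a+1)$ (since $E\overline F(v)=(\overline H_1-\overline H_2)(v)=\sqrt{a}(1+\mathbf{i})\overline v\neq 0$); $W$-invariance of the $\mathtt{g}$-weight set of a finite-dimensional $\mathtt{g}$-module would then force $(a+1,a-1)$ to be a weight, contradicting maximality of $\lambda$.

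The larger gap, which you flag yourself, is in the sufficiency direction. The $\mathtt{g}$-composition factors of $\mathrm{Res}\,M(\mathcal{V}(\lambda))$ are not all highest-weight modules of highest weight $\lambda$ or $\lambda-\alpha$: the Verma filtration from Lemma~\ref{lem3} has layers $M^{\mathtt{g}}(\lambda)$ and $M^{\mathtt{g}}(\lambda-\alpha)$, and for dominant $\lambda$ each of these has an infinite-dimensional composition factor (the antidominant $L^{\mathtt{g}}(s\cdot\lambda)$ resp.\ $L^{\mathtt{g}}(s\cdot(\lambda-\alpha))$) besides the finite-dimensional one, and there is no a priori reason these do not survive in the simple quotient. (Also, $\lambda-\alpha$ has $(\lambda-\alpha)_1-(\lambda-\alpha)_2=\lambda_1-\lambda_2-2$, so it is dominant in the paper's sense only when $\lambda_1-\lambda_2\geq 3$, not ``whenever $\lambda_1-\lambda_2\geq 1$'' as you write; for $\lambda_1-\lambda_2=1$ the module $L^{\mathtt{g}}(\lambda-\alpha)$ is in fact infinite dimensional.) Your fallback---exhibiting the maximal submodule of $M(\mathcal{V}(\lambda))$ directly via $F^{\lambda_1-\lambda_2+1}v$ and its $\overline F$-partner, or invoking \cite{Pe2}---is the correct way to finish, but as stated the composition-factor argument does not deliver the conclusion.
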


\begin{proof}
See, for example, \cite{Pe2} or, alternatively,
\cite{Or} for full details.
\end{proof}

The inclusion $\mathtt{g}\subset\mathfrak{q}$ of superalgebras
(here $\mathtt{g}$ is considered as a purely even superalgebra)
gives rise to the usual restriction functor 
$\mathrm{Res}_{\mathtt{g}}^{\mathfrak{q}}:
\mathfrak{q}\text{-}\mathrm{sMod}\to
\mathtt{g}\text{-}\mathrm{sMod}$. We identify 
$\mathtt{g}\text{-}\mathrm{Mod}$ with the full subcategory of 
$\mathtt{g}\text{-}\mathrm{sMod}$, consisting of even supermodules.
Hence we can compose $\mathrm{Res}_{\mathtt{g}}^{\mathfrak{q}}$ 
with the projection on 
$\mathtt{g}\text{-}\mathrm{Mod}$ and get the restriction functor 
$\mathrm{Res}:\mathfrak{a}\text{-}\mathrm{sMod}\to
\mathtt{g}\text{-}\mathrm{Mod}$.
We denote by $M^{\mathtt{g}}(\lambda)$ and $L^{\mathtt{g}}(\lambda)$
the Verma $\mathtt{g}$-module corresponding to $\lambda$ and its
unique simple quotient, respectively (see for example 
\cite[Chapter~3]{Ma}). Note that from the definitions it follows directly 
that  $\mathrm{Res}_{\mathtt{g}}^{\mathfrak{q}}\, 
M(V)\cong \mathrm{Res}_{\mathtt{g}}^{\mathfrak{q}}\, M(\Pi V)$ 
and $\mathrm{Res}_{\mathtt{g}}^{\mathfrak{q}}\, L(V)\cong 
\mathrm{Res}_{\mathtt{g}}^{\mathfrak{q}}\, L(\Pi V)$.

\begin{lemma}\label{lempar}
For any simple  $\mathfrak{h}$-supermodule $V\not\cong \Bbbk, \Pi\Bbbk$
we have isomorphisms $\mathrm{Res}\, M(V)\cong \mathrm{Res}\, M(\Pi V)$ 
and $\mathrm{Res}\, L(V)\cong \mathrm{Res}\, L(\Pi V)$.
\end{lemma}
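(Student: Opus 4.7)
The plan is first to observe that both claims of the lemma are equivalent to the assertions $M(V)_{\overline{0}}\cong M(V)_{\overline{1}}$ and $L(V)_{\overline{0}}\cong L(V)_{\overline{1}}$ as $\mathtt{g}$-modules: indeed $M(\Pi V)\cong\Pi M(V)$ and $L(\Pi V)\cong\Pi L(V)$, so their even parts as $\mathtt{g}$-modules are the odd parts of $M(V)$ and $L(V)$ respectively. By Lemma~\ref{lem1}(\ref{lem1.2}), if $\lambda_1\lambda_2=0$ and $\lambda\neq 0$ then $V\cong\Pi V$ as $\mathfrak{h}$-supermodules, so $M(V)\cong M(\Pi V)$ and $L(V)\cong L(\Pi V)$ as $\mathfrak{q}$-supermodules and the lemma is immediate. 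I therefore assume $\lambda_1\lambda_2\neq 0$.

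The key tool is the odd element $z=\overline{H}_1+\overline{H}_2\in U(\mathfrak{q})$, which corresponds to the odd identity matrix $\mathtt{M}(0,I)$. A direct block-matrix computation gives $[X,z]=0$ for all $X\in\mathtt{g}$, so $z$ is $\mathtt{g}$-invariant, and left multiplication by $z$ is a $\mathtt{g}$-linear odd endomorphism of every $\mathfrak{q}$-supermodule, in particular of $M(V)$ and of $L(V)$. Using $\overline{H}_i^2=H_i$ and $\overline{H}_1\overline{H}_2+\overline{H}_2\overline{H}_1=0$ in $U(\mathfrak{q})$, I obtain $z^2=H_1+H_2$; moreover $H_1+H_2$ is $\mathfrak{q}$-central and acts on both $M(V)$ and $L(V)$ by the scalar $\lambda_1+\lambda_2$. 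In the typical case $\lambda_1+\lambda_2\neq 0$, left multiplication by $z$ is thus invertible and yields the required $\mathtt{g}$-isomorphism between even and odd parts, simultaneously for $M(V)$ and $L(V)$.

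The remaining and main obstacle is the atypical case $\lambda_1+\lambda_2=0$ with $\lambda_1\lambda_2\neq 0$, where $z^2=0$ and $z$ is nilpotent. I would handle it by working directly with the PBW basis $\{F^i\otimes v,\,F^i\otimes\overline{v},\,F^i\overline{F}\otimes v,\,F^i\overline{F}\otimes\overline{v}\}_{i\geq 0}$ of $M(V)=U(\mathfrak{m})\otimes V$ and constructing an explicit $\mathtt{g}$-isomorphism $\phi:M(V)_{\overline{0}}\to M(V)_{\overline{1}}$ by the ansatz $\phi(F^i\otimes v)=F^i\otimes\overline{v}$ and $\phi(F^i\overline{F}\otimes\overline{v})=cF^{i+1}\otimes\overline{v}+dF^i\overline{F}\otimes v$. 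Equivariance under $F$ and the even Cartan is automatic from $[F,\overline{F}]=0$ and weight preservation; imposing equivariance under $E$ gives a single linear relation between $c$ and $\sqrt{\lambda_1}$, leaving $d$ free, and any nonzero $d$ renders $\phi$ invertible on each weight space by a triangular change of basis. A more canonical alternative is to use the degree-three $\mathtt{g}$-invariant odd element of $U(\mathfrak{q})$ obtained by antisymmetrizing $\overline{E}\overline{F}(\overline{H}_1-\overline{H}_2)$, which spans the trivial summand of $\Lambda^3(\mathfrak{q}_{\overline{1}})$ and can be shown to act non-nilpotently on atypical Vermas. For the atypical $L(V)$ one must additionally check that $\phi$ (or the cubic invariant) descends through the maximal submodule of $M(V)$, which I expect to be the most delicate step of the argument.
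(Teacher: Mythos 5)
Your proof takes essentially the same route as the paper: reduce the claim to showing that the two parity components of $M(V)$ (resp.\ $L(V)$) are isomorphic as $\mathtt{g}$-modules, and exploit the $\mathtt{g}$-invariant odd element $z=\overline{H}_1+\overline{H}_2\in U(\mathfrak{q})$ with $z^2=H_1+H_2$. The typical case is identical to the paper's. For the atypical case ($\lambda_1\lambda_2\neq 0$, $\lambda_1+\lambda_2=0$) the paper only asserts, rather tersely, that the one-sided isomorphism $z$ induces on $\mathcal{V}(\lambda)$ ``lifts'' to $M(V)$; since $z$ itself is no longer invertible on $M(V)$ there, a genuinely new map must be produced, and your PBW ansatz supplies it concretely. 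I checked the details: $E$-equivariance yields the single relation $ch+a=db$, where $a,b$ are the eigenvalues of $z$ on $v,\overline{v}$ (so $ab=\lambda_1+\lambda_2=0$ but not both are zero) and $h=\lambda_1-\lambda_2$, which is nonzero because $\lambda\neq 0$ forces $\lambda_1\neq 0$ when $\lambda_2=-\lambda_1$; in both subcases ($a=0$ or $b=0$) one can choose $d\neq 0$, and $\phi$ is then block-triangular with nonzero determinant on every weight space. So for $M(V)$ your argument is correct and complete.

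The genuine gap, which you flag yourself, is the descent to $L(V)$: a $\mathtt{g}$-isomorphism between the parity components of $M(V)$ need not carry the even part of the maximal proper $\mathfrak{q}$-subsupermodule to its odd part, because that submodule is singled out by the $\mathfrak{q}$-action, not by the $\mathtt{g}$-action. However, this step does not actually require $\phi$. For atypical $\lambda\neq 0$, Lemma~\ref{lem4}\eqref{lem4.2} gives $\mathrm{Res}\,L(\mathcal{V}(\lambda))\cong L^{\mathtt{g}}(\lambda)$; applying the same statement to $\Pi\mathcal{V}(\lambda)$, which is again a simple $\mathfrak{h}$-supermodule of weight $\lambda$, gives $\mathrm{Res}\,L(\Pi\mathcal{V}(\lambda))\cong L^{\mathtt{g}}(\lambda)$ as well, and the two restrictions agree. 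As for your proposed ``canonical alternative'' via the cubic $\mathtt{g}$-invariant in $\Lambda^3(\mathfrak{q}_{\overline{1}})$: the claim that it acts non-nilpotently on atypical Vermas is stated without any verification and is not at all obvious, so that route is not complete either.
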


\begin{proof}
Assume that $V$ is a weight $\lambda\neq 0$. We consider the element
$\overline{H}_1+\overline{H}_1\in U(\mathfrak{q})$. This element
commutes with every element in $U(\mathtt{g})$ and squares to
$H_1+H_2$. Hence for typical $\lambda$ multiplication with
$\overline{H}_1+\overline{H}_1$ defines a
$\mathtt{g}$-isomorphism between $M(V)_{\overline{0}}$
and $M(V)_{\overline{1}}$ (resp. $L(V)_{\overline{0}}$
and $L(V)_{\overline{1}}$) in both direction, and the claim follows.

For atypical $\lambda\neq 0$ from the definition of 
$V(\lambda)$ it follows that multiplication with
$\overline{H}_1+\overline{H}_1$ defines a
$\mathtt{g}$-isomorphism from either $V(\lambda)_{\overline{0}}$
to $V(\lambda)_{\overline{1}}$ or vice versa. Using the definition
of the Verma supermodule this isomorphism lifts to an isomorphism from
$M(V)_{\overline{0}}$ to $M(V)_{\overline{1}}$ (or vice versa, respectively)
and then induces an isomorphism from $L(V)_{\overline{0}}$
to  $L(V)_{\overline{1}}$ (or vice versa, respectively). Hence the 
claim follows in this case as well.
\end{proof}

Later on we will need the following explicit description of the
restrictions of the supermodules $M(V)$ and $L(V)$.

\begin{lemma}\label{lem3} 
\begin{enumerate}[(i)]
\item\label{lem3.1}  
Let $\lambda\in \mathfrak{h}^*_{\overline{0}}$, $\lambda\neq 0$, and $V\in
\{\mathcal{V}(\lambda),\Pi \mathcal{V}(\lambda)\}$. Then we have
an isomorphism $\mathrm{Res}\, M(V)\cong X$, where for the 
$\mathtt{g}$-module $X$ there is a short exact sequence
\begin{displaymath}
0\to M^{\mathtt{g}}(\lambda)\to X\to M^{\mathtt{g}}(\lambda-\alpha)\to 0.
\end{displaymath}
\item\label{lem3.2}
We have $\mathrm{Res}\, M(\Bbbk)\cong M^{\mathtt{g}}(0)$
and $\mathrm{Res}\, \Pi M(\Bbbk)\cong M^{\mathtt{g}}(-\alpha)$.
\end{enumerate}
\end{lemma}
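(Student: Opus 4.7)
The plan is a direct PBW computation: identify the even part of $M(V)$ explicitly, extract the $\mathtt{g}$-submodule generated by the even part of $V$, and analyse the quotient via a single commutator calculation.

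For part~(\ref{lem3.1}), Lemma~\ref{lempar} reduces matters to $V=\mathcal{V}(\lambda)$. Since $\mathtt{M}(0,E_{21})^2 = \mathtt{M}(E_{21}^2,0) = 0$, we have $\overline{F}^2=0$ in $U(\mathfrak{q})$, so the PBW theorem applied to $\mathfrak{m}=\Bbbk F\oplus\Bbbk\overline{F}$ gives the $\Bbbk$-basis
$$\{F^k\otimes v,\ F^k\otimes\overline{v},\ F^k\overline{F}\otimes v,\ F^k\overline{F}\otimes\overline{v}\}_{k\geq 0}$$
of $M(V)$, and tracking parities shows $M(V)_{\overline{0}} = \mathrm{span}\{F^k\otimes v,\ F^k\overline{F}\otimes\overline{v}\}_{k\geq 0}$. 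The vector $1\otimes v$ has weight $\lambda$ and is annihilated by $E\in\mathfrak{n}$ because $\mathfrak{n}V=0$, and its $\mathtt{g}$-orbit $\{F^k\otimes v\}$ is $\Bbbk$-free; hence $U(\mathtt{g})(1\otimes v)\cong M^{\mathtt{g}}(\lambda)$. For the quotient, using $[E,\overline{F}]=\overline{H}_1-\overline{H}_2$, I compute
$$E\cdot(\overline{F}\otimes\overline{v}) = \overline{F}E\otimes\overline{v} + 1\otimes(\overline{H}_1-\overline{H}_2)\overline{v},$$
which by the defining formulas of $\mathcal{V}(\lambda)$ is a scalar multiple of $1\otimes v$ and hence lies in the submodule. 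Thus the image of $\overline{F}\otimes\overline{v}$ in the quotient is a highest-weight vector of weight $\lambda-\alpha$, and a weight-space dimension count identifies the quotient with $M^{\mathtt{g}}(\lambda-\alpha)$.

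For part~(\ref{lem3.2}), the same PBW analysis applied to $V=\Bbbk$ yields a basis $\{F^k\otimes v,\ F^k\overline{F}\otimes v\}_{k\geq 0}$ of $M(\Bbbk)$ whose even and odd parts are the first and second families, respectively. The even part is visibly $M^{\mathtt{g}}(0)$. The generator $\overline{F}\otimes v$ of the odd part has weight $-\alpha$, and the same commutator computation gives $E\cdot(\overline{F}\otimes v) = 1\otimes(\overline{H}_1-\overline{H}_2)v = 0$ because $\mathfrak{h}$ acts trivially on $\Bbbk$. Hence $M(\Bbbk)_{\overline{1}}\cong M^{\mathtt{g}}(-\alpha)$, and $\mathrm{Res}\,\Pi M(\Bbbk) = (\Pi M(\Bbbk))_{\overline{0}} = M(\Bbbk)_{\overline{1}}$ gives the second isomorphism.

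The argument is routine once the relations $\overline{F}^2=0$ and $[E,\overline{F}]=\overline{H}_1-\overline{H}_2$ are in hand, and no genuine obstacle arises. The only mildly delicate point is that the scalar $\sqrt{\lambda_1}-\mathbf{i}\sqrt{\lambda_2}$ controlling $E\cdot(\overline{F}\otimes\overline{v})$ can vanish in the atypical case $\lambda_1+\lambda_2=0$; this is immaterial, because the result always lies in the submodule $M^{\mathtt{g}}(\lambda)$ irrespective of whether the scalar is zero.
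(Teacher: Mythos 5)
Your argument is correct and is exactly the fleshed-out version of what the paper's one-line proof ("This follows from the definitions and the PBW theorem.") leaves implicit. The PBW basis $\{F^k\otimes v,\ F^k\otimes\overline{v},\ F^k\overline{F}\otimes v,\ F^k\overline{F}\otimes\overline{v}\}$, the identification of $M(V)_{\overline{0}}$, the relation $[E,\overline{F}]=\overline{H}_1-\overline{H}_2$, and the computation $E\cdot(\overline{F}\otimes\overline{v})=(\sqrt{\lambda_1}-\mathbf{i}\sqrt{\lambda_2})(1\otimes v)$ are all correct, and you rightly observe that the vanishing of this scalar in the atypical case is harmless since the conclusion only needs membership in $U(\mathtt{g})(1\otimes v)$; the invocation of Lemma~\ref{lempar} to reduce to $V=\mathcal{V}(\lambda)$ and the treatment of $M(\Bbbk)$ via the one-dimensional $V$ are likewise fine.
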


\begin{proof}
This follows from the definitions and the PBW theorem.
\end{proof}

\begin{lemma}\label{lem4} 
Let $\lambda\in \mathfrak{h}^*_{\overline{0}}$ and $V\in
\{\mathcal{V}(\lambda),\Pi \mathcal{V}(\lambda)\}$. 
\begin{enumerate}[(i)]
\item\label{lem4.1} 
$L(\mathcal{V}(0))\cong \Bbbk$, $L(\Pi\mathcal{V}(0))\cong \Pi\Bbbk$.
\item\label{lem4.2} 
If $\lambda\neq 0$ is atypical, then $\mathrm{Res}\, L(V)\cong 
L^{\mathtt{g}}(\lambda)$.
\item\label{lem4.3} 
If $\lambda\neq 0$ is typical and $\lambda_1-\lambda_2=1$, 
then $\mathrm{Res}\, L(V)\cong 
L^{\mathtt{g}}(\lambda)$.
\item\label{lem4.4} 
If $\lambda\neq 0$ is typical, dominant
and $\lambda_1-\lambda_2\neq 1$, then we have
$\mathrm{Res}\, L(V)\cong 
L^{\mathtt{g}}(\lambda)\oplus  L^{\mathtt{g}}(\lambda-\alpha)$.
\item\label{lem4.5} 
In all other cases we have $L(V)\cong M(V)$.
\end{enumerate}
\end{lemma}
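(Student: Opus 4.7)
The plan is to determine, in each case, the maximal proper $\mathfrak{q}$-subsupermodule $K(V)$ of $M(V)$ and then read off $\mathrm{Res}\,L(V) = \mathrm{Res}\,M(V)/\mathrm{Res}\,K(V)$ using Lemma~\ref{lem3} and the classical structure of $\mathfrak{gl}_2$-Verma modules. By Lemma~\ref{lempar} I may fix $V = \mathcal{V}(\lambda)$. Part~(\ref{lem4.1}) is immediate: $\mathcal{V}(0) = \Bbbk$ carries the zero action of $\mathfrak{h} \oplus \mathfrak{n}$, which extends to the trivial $\mathfrak{q}$-action on $\Bbbk$, giving a nonzero map $M(\mathcal{V}(0)) \twoheadrightarrow \Bbbk$; by uniqueness of the simple top this identifies $L(\mathcal{V}(0))$ with $\Bbbk$, and applying $\Pi$ handles the parity shift.

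For $\lambda \neq 0$, I would fix the PBW basis $\{F^a v,\, F^a\overline{F} v,\, F^a \overline{v},\, F^a\overline{F}\,\overline{v} : a \geq 0\}$ of $M(V)$, making use of $[\overline{F},F]=0$ and $\overline{F}^{\,2}=0$ in $U(\mathfrak{q})$. Direct matrix calculation yields the supercommutators $[E,\overline{F}] = \overline{H}_1 - \overline{H}_2$ and $[\overline{E},\overline{F}] = H_1 + H_2$, from which one extracts, at weight $\lambda-\alpha$, the unique (up to scalar) $\mathtt{g}$-highest weight vector in $M(V)_{\overline{0}}$,
\[
u = (\sqrt{\lambda_1}-\mathbf{i}\sqrt{\lambda_2})\,Fv \;-\; (\lambda_1-\lambda_2)\,\overline{F}\,\overline{v},
\]
together with the action $\overline{E}\,u = (\lambda_1+\lambda_2)(1-\lambda_1+\lambda_2)\,\overline{v}$ and a parallel odd companion $u' \in M(V)_{\overline{1}}$. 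Therefore $u$ generates a proper subsupermodule precisely when $\lambda$ is atypical or $\lambda_1-\lambda_2 = 1$; otherwise it generates $\overline{v}$ and hence all of $M(V)$.

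Under the hypothesis of part~(\ref{lem4.5}) (typical, $\lambda_1-\lambda_2 \notin \mathbb{N}$) both $M^{\mathtt{g}}(\lambda)$ and $M^{\mathtt{g}}(\lambda-\alpha)$ are simple, so by Lemma~\ref{lem3} the only $\mathfrak{h}$-highest weight vectors of $M(V)$ live at weights $\lambda$ and $\lambda-\alpha$; since $\overline{E}\,u \neq 0$ (and analogously for $u'$), the trivial subsupermodule is the only proper one and $L(V) = M(V)$. Under the hypothesis of part~(\ref{lem4.3}) we have $\overline{E}\,u = 0$, so $u, u'$ span a two-dimensional $\mathfrak{h}$-subsupermodule isomorphic to $\mathcal{V}(\lambda-\alpha)$; it generates a subsupermodule isomorphic to the Verma $M(\mathcal{V}(\lambda-\alpha))$, which is simple by part~(\ref{lem4.5}) since $\lambda-\alpha$ is typical non-dominant. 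This is therefore $K(V)$, and Lemma~\ref{lem3} identifies the $\mathtt{g}$-restriction of $M(V)/K(V)$ with the two-dimensional $L^{\mathtt{g}}(\lambda)$.

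For parts~(\ref{lem4.2}) and (\ref{lem4.4}) I would locate the additional $\mathfrak{h}$-highest weight vectors coming from reducibility of the $\mathfrak{gl}_2$-Vermas at the Shapovalov weights $s_\alpha\cdot\lambda$ and $s_\alpha\cdot(\lambda-\alpha)$. These generate further $\mathfrak{q}$-subsupermodules whose combination (together with the subsupermodule generated by $u, u'$ in the atypical case) constitutes $K(V)$; Lemma~\ref{lem3} then yields $L^{\mathtt{g}}(\lambda)\oplus L^{\mathtt{g}}(\lambda-\alpha)$ in case (\ref{lem4.4}) and $L^{\mathtt{g}}(\lambda)$ in case (\ref{lem4.2}). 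The main obstacle is the atypical case (\ref{lem4.2}), where one must verify that the sum of these candidate subsupermodules indeed exhausts $K(V)$, so that the quotient $M(V)/K(V)$ has $\mathtt{g}$-restriction exactly $L^{\mathtt{g}}(\lambda)$ without leftover composition factors of weight $\lambda-\alpha$.
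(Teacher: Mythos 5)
Your plan --- compute the $\mathfrak{q}$-highest-weight vectors in $M(V)$ explicitly, determine the maximal proper subsupermodule $K(V)$, and read off $\mathrm{Res}\,L(V)$ from Lemma~\ref{lem3} --- is a legitimate route. The paper itself does not prove Lemma~\ref{lem4} but defers to \cite{Pe2,Or}, so any correct direct argument would be a welcome addition. Your computation of $[E,\overline{F}]=\overline{H}_1-\overline{H}_2$, $[\overline{E},\overline{F}]=H_1+H_2$, $\overline{F}^2=0$, the singular vector $u$ at weight $\lambda-\alpha$, and the key formula $\overline{E}u=(\lambda_1+\lambda_2)(1-\lambda_1+\lambda_2)\overline{v}$ are all correct, and parts~(\ref{lem4.1}) and (\ref{lem4.3}) are essentially complete (for~(\ref{lem4.3}) the non-circular appeal to~(\ref{lem4.5}) applied to $\lambda-\alpha$ is fine).

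There is, however, a genuine and partly acknowledged gap. First, a local error in your treatment of~(\ref{lem4.5}): when $\lambda_1-\lambda_2=0$ (typical singular, which is a case-(\ref{lem4.5}) weight since $0\notin\mathbb{N}$), the module $M^{\mathtt{g}}(\lambda)$ is \emph{not} simple --- it has a singular vector at $\lambda-\alpha$ --- so your assertion that ``both $M^{\mathtt{g}}(\lambda)$ and $M^{\mathtt{g}}(\lambda-\alpha)$ are simple'' fails there. The conclusion you want (that all $\mathtt{g}$-highest-weight vectors of $\mathrm{Res}\,M(V)$ live at $\lambda$ or $\lambda-\alpha$) is still true, but it needs the extension argument (any $\mathtt{g}$-hw vector below $\lambda-\alpha$ would project to a $\mathtt{g}$-hw vector of the quotient $M^{\mathtt{g}}(\lambda-\alpha)$, which has none below its top, so it lies in the submodule $M^{\mathtt{g}}(\lambda)$, whose lowest singular weight is $\lambda-\alpha$). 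Second, and more seriously, cases~(\ref{lem4.2}) and~(\ref{lem4.4}) are left at the level of a declaration. For~(\ref{lem4.2}) one must actually establish that the subsupermodule $N$ generated by $u$ (which, in the atypical case, is a scalar multiple of one of $Fv$, $\overline{F}\overline{v}$ --- since one of $\sqrt{\lambda_1}\pm\mathbf{i}\sqrt{\lambda_2}$ vanishes) has $\mathrm{Res}$-character exactly $\mathrm{ch}\,M^{\mathtt{g}}(\lambda-\alpha)$ and that $M(V)/N$ is simple; this requires checking, e.g., that $\overline{F}u=0$, that $\overline{H}_1u$ is proportional to the odd companion $u'$, and (in the dominant atypical subcase) locating and handling the additional Shapovalov singular vectors. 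For~(\ref{lem4.4}) one likewise has to verify that the subsupermodule generated by the classical singular vectors at weights $s_\alpha\cdot\lambda$ and $s_\alpha\cdot(\lambda-\alpha)$ has precisely the complementary character and is all of $K(V)$. These verifications are not hard given the tools you set up, but ``Lemma~\ref{lem3} then yields...'' is not yet a proof of them; until they are carried out, the argument does not close.
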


\begin{proof}
See, for example, \cite{Pe2} or, alternatively,
\cite{Or} for full details.
\end{proof}

\subsection{Induction and coinduction}\label{s2.5} 

The restriction functor $\mathrm{Res}$ defined in the previous subsection
is obviously exact and hence admits
both a left and a right adjoint. As usual, the left adjoint of
$\mathrm{Res}$ is the induction functor 
\begin{displaymath}
\mathrm{Ind}=U(\mathfrak{q})\otimes_{U(\mathtt{g})}{}_- :
\mathtt{g}\text{-}\mathrm{Mod}\to \mathfrak{a}\text{-}\mathrm{sMod}.
\end{displaymath}
As usual, the right adjoint of $\mathrm{Res}$ is the coinduction functor,
however, by \cite[Proposition~22]{Fr}, it is isomorphic to
the induction functor $\mathrm{Ind}$. In particular, $\mathrm{Ind}$ is exact
(which also follows from the PBW theorem). From the PBW theorem it follows 
that the composition $\mathrm{Res}
\circ\mathrm{Ind}$ is isomorphic to the
endofunctor $\left(\displaystyle\bigwedge \mathfrak{q}_{\overline{1}}
\otimes_{\Bbbk}{}_-\right)_{\overline{0}}$ of 
$\mathtt{g}\text{-}\mathrm{Mod}$ (here 
$\mathfrak{q}_{\overline{1}}$ is considered as a purely odd 
$\mathtt{g}$-supermodule in the natural way).

\subsection{Localization of $U(\mathfrak{q})$}\label{s2.55} 

Consider the multiplicative subset $X=\{1,F,F^2,F^3,\dots\}$ of
$U(\mathfrak{q})$. The adjoint action of $F$ on $U(\mathfrak{q})$
is obviously locally nilpotent. Hence $X$ is an Ore subset of 
$U(\mathfrak{q})$ (see \cite[Lemma~4.2]{Mat}) and we can denote by $U'$ the
Ore localization of $U(\mathfrak{q})$ with respect to $X$. According to
\cite[Lemma~4.3]{Mat}, there exists a unique family $\theta_{z}$, 
$z\in\Bbbk$, of automorphisms of $U'$, which are polynomial in 
$z$ and satisfy the condition $\theta_{z}(u)=F^zuF^{-z}$,
$u\in U'$, provided that $z\in\mathbb{Z}$. 
From the PBW theorem we have that $F$ is not 
a zero divisor in $U(\mathfrak{q})$ and hence the canonical map
$U(\mathfrak{q})\to U'$ is injective. For $z\in\Bbbk$ denote by 
$\Theta_z$ the endofunctor of $\mathfrak{q}\text{-}\mathrm{sMod}$ defined
for $M\in \mathfrak{q}\text{-}\mathrm{sMod}$ as follows:
\begin{displaymath}
\Theta_z(M)=\left\{
v\in U'\otimes_{U(\mathfrak{q})} M \,:\,
E^k(v)=0\text{ for some }k\in\mathbb{N}
\right\},
\end{displaymath}
where the left action of $U(\mathfrak{q})$ on $U'$ is given by
multiplication with $\theta_{z}(u)$, $u\in U(\mathfrak{q})$. 
The functor $\Theta_z$ is defined on morphisms in the natural way.

\subsection{Primitive ideals}\label{s2.6} 

An important rough invariant of a simple $\mathfrak{q}$-supermodule
$L$ is its annihilator $\mathrm{Ann}_{U(\mathfrak{q})}(L)$, which 
is a graded primitive ideal in $U(\mathfrak{q})$. Hence it is 
important to know all primitive ideals in $U(\mathfrak{q})$. 
For $\lambda\in\mathfrak{h}^*_{\overline{0}}$ we set
\begin{gather*}
\mathcal{I}_{\lambda}=\mathrm{Ann}_{U(\mathfrak{q})}
(L(\mathcal{V}(\lambda)))=\mathrm{Ann}_{U(\mathfrak{q})}
(L(\Pi\mathcal{V}(\lambda))),\\
\mathcal{J}_{\lambda}=\mathrm{Ann}_{U(\mathfrak{q})}
(L(\mathcal{V}(\lambda))_{\overline{0}})=\mathrm{Ann}_{U(\mathfrak{q})}
(L(\Pi\mathcal{V}(\lambda))_{\overline{1}}),\\
\mathcal{J}'_{\lambda}=\mathrm{Ann}_{U(\mathfrak{q})}
(L(\mathcal{V}(\lambda))_{\overline{1}})=\mathrm{Ann}_{U(\mathfrak{q})}
(L(\Pi\mathcal{V}(\lambda))_{\overline{0}}).
\end{gather*}
By definition, $\mathcal{I}_{\lambda}$ is an ideal of $U(\mathfrak{q})$,
while $\mathcal{J}_{\lambda}$ and $\mathcal{J}'_{\lambda}$ are  
$U(\mathfrak{q})\text{-}U(\mathtt{g})$-sub\-bi\-mo\-du\-les
of $U(\mathfrak{q})$. Obviously, we have 
$\mathcal{I}_{\lambda}\subset \mathcal{J}_{\lambda}$,
$\mathcal{I}_{\lambda}\subset \mathcal{J}'_{\lambda}$, and
$\mathcal{I}_{\lambda}=\mathcal{J}_{\lambda}\cap \mathcal{J}'_{\lambda}$.

\begin{lemma}[\cite{Mu}]\label{lem5}
The map $\lambda\mapsto \mathcal{I}_{\lambda}$
is a surjection from $\mathfrak{h}^*_{\overline{0}}$ onto the set
of graded primitive ideals of $U(\mathfrak{q})$.
\end{lemma}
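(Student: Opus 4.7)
The inclusion $\{\mathcal{I}_{\lambda}\}\subset\{\text{graded primitive ideals of }U(\mathfrak{q})\}$ is immediate, since by definition each $\mathcal{I}_{\lambda}$ is the annihilator of the simple supermodule $L(\mathcal{V}(\lambda))$. The substance of the lemma is therefore surjectivity. Given a graded primitive ideal $\mathcal{I}=\mathrm{Ann}_{U(\mathfrak{q})}(N)$ for some simple $N\in\mathfrak{q}\text{-}\mathrm{sMod}$, my plan is to produce $\lambda\in\mathfrak{h}^*_{\overline{0}}$ with $\mathcal{I}=\mathcal{I}_{\lambda}$, proceeding in three steps.

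First, I would establish that $N$ admits a central character. The even center $Z(\mathfrak{q})_{\overline{0}}$ has countable dimension over $\Bbbk$, and $\Bbbk$ is uncountable and algebraically closed of characteristic zero. Since each $z\in Z(\mathfrak{q})_{\overline{0}}$ acts on $N$ by an even $\mathfrak{q}$-supermodule endomorphism, and the ring of even endomorphisms of a simple supermodule is a division algebra, the super-version of Dixmier's lemma forces this action to be scalar, producing a character $\chi_N:Z(\mathfrak{q})_{\overline{0}}\to\Bbbk$.

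Second, I would realize $\chi_N$ via a highest weight supermodule. Using Sergeev's description of $Z(\mathfrak{q}_2)$ through a queer Harish-Chandra homomorphism, every character of $Z(\mathfrak{q}_2)_{\overline{0}}$ is attained on some Verma supermodule $M(\mathcal{V}(\mu))$, and hence on its simple head $L(\mathcal{V}(\mu))$, for an explicit $\mu\in\mathfrak{h}^*_{\overline{0}}$. I fix such a $\mu$ with central character equal to $\chi_N$.

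Third, I would invoke a Duflo-type theorem for $\mathfrak{q}_2$ to conclude that any graded primitive ideal of $U(\mathfrak{q})$ with central character $\chi_N$ is of the form $\mathcal{I}_{\lambda}$ for some $\lambda$ sharing that character. This is the real work, and the main obstacle. In the classical $\mathfrak{gl}_2$ setting this is Duflo's theorem, proven via Jantzen's translation principle and coherent families. For $\mathfrak{q}_2$ it is carried out in \cite{Mu}; the transfer requires controlling the parity grading and the non-commutative Cartan $\mathfrak{h}$ throughout the translation functors, but the skeleton is the same: every $Z$-finite simple in a fixed central block is linked via translation to a simple highest weight supermodule, whose annihilator is by construction an $\mathcal{I}_{\lambda}$. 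Steps~1 and~2 are formal once Sergeev's computation of the center is in hand, whereas Step~3 genuinely requires the Duflo-type machinery adapted to the queer series.
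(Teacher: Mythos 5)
The paper does not supply a proof of this lemma: it is stated with the attribution \cite{Mu}, and the entire burden is placed on Musson's classification of primitive ideals in enveloping algebras of classical simple Lie superalgebras. So there is no ``paper's own proof'' to compare against; your proposal is really a reconstruction of the shape of Musson's argument, and should be judged as such.

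On those terms, Steps 1 and 2 are sound and standard. The uncountability of $\Bbbk$ together with the Quillen/Dixmier argument does force even central elements to act by scalars on any simple supermodule (the even endomorphism ring of a simple supermodule is a division ring of at most countable dimension over $\Bbbk$, hence $\Bbbk$ itself), and Sergeev's computation of $Z(\mathfrak{q}_n)$ does make it possible to realize any central character on a Verma supermodule. The problem is Step 3, which, as you candidly admit, is the whole lemma: linking an arbitrary graded primitive ideal with a given central character to the annihilator of a highest weight supermodule. Your treatment of this step consists of the sentence ``For $\mathfrak{q}_2$ it is carried out in \cite{Mu}'' plus a gesture toward translation functors and coherent families. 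That is not a proof --- it is the citation the paper already makes, rephrased. The content you would actually need to supply (how the noncommutative Cartan $\mathfrak{h}$ and the parity grading are carried through translation, and why every $Z$-finite simple in a block is translation-linked to a highest weight supermodule in the queer setting) is exactly what is deferred. So the proposal is a reasonable roadmap of the structure of the proof, but it does not give an argument that could replace the reference; it reduces the lemma to the reference, which is what the paper does in one line.
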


We would need to know the fibers of the map from Lemma~\ref{lem5}.
They are given by the following:

\begin{proposition}\label{prop5-new}
Let $\lambda,\mu\in \mathfrak{h}^*_{\overline{0}}$. Then the equality
$\mathcal{I}_{\lambda}=\mathcal{I}_{\mu}$ holds only in the case when
$\lambda=\mu$ or in the following cases:
\begin{enumerate}[(a)]
\item\label{prop5-new.1} $\lambda_1+\lambda_2\neq 0$,
$\lambda$ is not integral and $(\mu_1,\mu_2)=(\lambda_2,\lambda_1)$; 
\item\label{prop5-new.2}  $\lambda_1+\lambda_2=0$,
$\lambda$ is not integral and $(\mu_1,\mu_2)=(\lambda_2-1,\lambda_1+1)$. 
\end{enumerate}
\end{proposition}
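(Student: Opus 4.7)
The plan is to combine two invariants of the primitive ideal $\mathcal{I}_\lambda$: its central character, and its intersection with $U(\mathtt{g})$. Since $\mathcal{I}_\lambda\cap U(\mathtt{g})=\mathrm{Ann}_{U(\mathtt{g})}(\mathrm{Res}\, L(\mathcal{V}(\lambda)))$, Lemma~\ref{lem4} allows one to compute this intersection case by case, and Duflo's theorem for $\mathfrak{gl}_2$ says that $\mathrm{Ann}_{U(\mathtt{g})}(L^{\mathtt{g}}(\lambda))$ depends only on the dot-orbit $\{\lambda,s\cdot\lambda\}$, where $s\cdot\lambda=(\lambda_2-1,\lambda_1+1)$. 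The useful identity $s\cdot(\lambda-\alpha)=(\lambda_2,\lambda_1)=s\lambda$ underlies both cases (a) and (b): swapping $\lambda$ and $\lambda-\alpha$ corresponds to passing from $\lambda$ to $s\lambda$.

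For the necessary direction, I perform a case analysis using Lemma~\ref{lem4}. In the atypical case $\lambda\neq 0$ (part~(ii)), the intersection equals $\mathrm{Ann}_{U(\mathtt{g})}(L^{\mathtt{g}}(\lambda))$ and the only candidate for $\mu\neq\lambda$ compatible with this is $\mu=s\cdot\lambda$. In the typical non-dominant case (part~(v)), $L(\mathcal{V}(\lambda))=M(\mathcal{V}(\lambda))$, and by Lemma~\ref{lem3}(i) its restriction has composition factors $L^{\mathtt{g}}(\lambda)$ and $L^{\mathtt{g}}(\lambda-\alpha)$ (both simple when $\lambda$ is non-integral); the intersection is then determined by the unordered pair of dot-orbits $\{\lambda,s\cdot\lambda\}$ and $\{\lambda-\alpha,s\lambda\}$, forcing either $\mu=\lambda$ or $\mu=s\lambda$. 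The remaining cases (parts~(iii), (iv), and $\lambda=0$) leave no room for a non-trivial identification: for instance, the finite dimensional weight spaces of specific dimensions appearing in part~(iv) pin down $\mu=\lambda$ uniquely, and integrality of $\lambda$ distinguishes the dot-orbit of $\lambda$ from the undotted Weyl orbit.

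For the sufficient direction, case~(a) is handled by a central character argument: for typical non-integral $\lambda$, $L(\mathcal{V}(\lambda))=M(\mathcal{V}(\lambda))$ is a Verma supermodule with typical central character, so by a super-analogue of Kostant's theorem (provable directly because $M(\mathcal{V}(\lambda))$ is free over $U(\mathfrak{m})$ with generic parameters, or derivable from \cite{Mu}) its annihilator depends only on the central character. Since the Harish-Chandra projection for $Z(\mathfrak{q}_2)$ lies in $W$-invariant polynomials on $\mathfrak{h}^*_{\overline{0}}$, we have $\chi_\lambda=\chi_{s\lambda}$, whence $\mathcal{I}_\lambda=\mathcal{I}_{s\lambda}$. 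Case~(b) uses the Mathieu-style localization functor $\Theta_z$ from Subsection~\ref{s2.55}: since $\Theta_z$ is built from Ore localization at $F$ composed with the automorphism $\theta_z$, it preserves two-sided ideals. For $z\in\Bbbk$ chosen so that the shift $z\alpha$ takes $\lambda$ into $s\cdot\lambda$, an explicit computation of $\Theta_z$ on $L(\mathcal{V}(\lambda))$ in weight coordinates produces $L(\mathcal{V}(s\cdot\lambda))$ as a simple subquotient, yielding $\mathcal{I}_\lambda\subseteq\mathcal{I}_{s\cdot\lambda}$, and equality follows by the symmetric argument. The main obstacle is exactly this last step: verifying that $\Theta_z$ genuinely produces $L(\mathcal{V}(s\cdot\lambda))$ as a subquotient requires explicit calculation in the Ore localization of $U(\mathfrak{q}_2)$ at $F$, and the non-commutativity of $\mathfrak{h}$ complicates tracking the action of the odd Cartan elements $\overline{H}_i$ on locally $E$-finite vectors.
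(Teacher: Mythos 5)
Your necessary direction follows the paper's route (constraining $\mu$ via central elements $H_1+H_2$ and expressions in the Casimir $\mathtt{c}$, read off from Lemma~\ref{lem4} and Lemma~\ref{lempar}), recast in terms of $\mathcal{I}_\lambda\cap U(\mathtt{g})$ and Duflo for $\mathfrak{gl}_2$; this is workable, though the exclusion of the integral case deserves the paper's explicit dichotomy (normalize $\lambda$ to be dominant regular and compare $\dim U(\mathfrak{q})/\mathcal{I}_\lambda<\infty$ against $\dim U(\mathfrak{q})/\mathcal{I}_\mu=\infty$) rather than the vague remark about weight-space dimensions. For sufficiency of~(b), you reproduce the paper's $\Theta_z$ strategy but skip the step that makes it work: the claim that $\Theta_z$ ``preserves two-sided ideals'' is not a formal consequence of its construction once $z\notin\mathbb{Z}$, because $\theta_z$ is then not an inner automorphism of $U'$. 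The paper establishes $\theta_z(U'\mathcal{I}_\lambda U')\subseteq U'\mathcal{I}_\lambda U'$ for all $z\in\Bbbk$ by observing that $z\mapsto\theta_z(u)$ is polynomial in $z$ and takes values in the ideal at all integer points, then inverting a Vandermonde matrix; that argument is essential and must be supplied.

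The genuine gap is your proof of sufficiency for~(a). The paper runs the $\Theta_z$ argument again (with $z$ chosen so the twist implements the un-dotted reflection $\lambda\mapsto(\lambda_2,\lambda_1)$); you instead invoke a super-Kostant theorem for $\mathfrak{q}_2$, namely that for typical non-integral $\lambda$ the annihilator of the simple Verma supermodule $M(\mathcal{V}(\lambda))$ equals $U(\mathfrak{q})\ker\chi_\lambda$, whence it depends only on the central character, and $\mathcal{I}_\lambda=\mathcal{I}_{(\lambda_2,\lambda_1)}$ follows from $W$-invariance of the Sergeev Harish-Chandra image. If this super-Kostant fact held, your argument would indeed be cleaner than the paper's, but it is not established for $\mathfrak{q}_2$: \cite{Mu} classifies primitive ideals but does not prove that typical Verma annihilators are centrally generated, and the direct ``free over $U(\mathfrak{m})$'' route is a substantial theorem even for reductive Lie algebras and is not routine here — the center $Z(\mathfrak{q})$ is considerably more intricate, and for typical-but-not-strongly-typical $\lambda$ (which do occur in case~(a), e.g.\ $\lambda_1=0$ and $\lambda_2$ irrational) the anticenter of \cite{Go2} degenerates, a phenomenon with no $\mathfrak{gl}_2$ counterpart. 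You must either prove this super-Kostant statement or fall back on the $\Theta_z$ argument for~(a) as the paper does.
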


\begin{proof}
The element $H_1+H_2$ acts on $L(\mathcal{V}(\lambda))$ via the scalar
$\lambda_1+\lambda_2$ and on $L(\mathcal{V}(\mu))$ via the scalar
$\mu_1+\mu_2$. Therefore $H_1+H_2-\lambda_1-\lambda_2\in 
\mathcal{I}_{\lambda}$ and $H_1+H_2-\mu_1-\mu_2\in 
\mathcal{I}_{\mu}$. Hence the equality 
$\mathcal{I}_{\lambda}=\mathcal{I}_{\mu}$ 
implies $\lambda_1+\lambda_2=\mu_1+\mu_2$.

Assume now that $\lambda_1+\lambda_2=\mu_1+\mu_2\neq 0$. Consider the
quadratic Casimir element $\mathtt{c}=(H_1-H_2+1)^2+4FE$. For
$\nu\in\mathfrak{h}^*_{\overline{0}}$ we have 
$(\mathtt{c}-(\nu_1-\nu_2+1)^2)L^{\mathtt{g}}(\nu)=0$
(see e.g. \cite[Chapter~3]{Ma}). Hence from Lemmata~\ref{lem4} 
and \ref{lempar} it follows that the ideal $\mathcal{I}_{\mu}$ contains the 
element $(\mathtt{c}-(\nu_1-\nu_2+1)^2)(\mathtt{c}-(\nu_1-\nu_2-1)^2)$.
This means that the equality
$\mathcal{I}_{\lambda}=\mathcal{I}_{\mu}$ 
implies $(\mu_1,\mu_2)=(\lambda_2,\lambda_1)$ provided that 
$\lambda\neq \mu$. 

Assume that $(\mu_1,\mu_2)=(\lambda_2,\lambda_1)$, $\lambda\neq \mu$, and
both $\lambda$ and $\mu$ are integral. Then without loss of generality we
may assume that $\lambda$ is dominant and regular. In this case 
$U(\mathfrak{q})/\mathcal{I}_{\lambda}$ is finite dimensional while 
$U(\mathfrak{q})/\mathcal{I}_{\mu}$ is not (by Lemma~\ref{lem2}).
Therefore the equality $\mathcal{I}_{\lambda}=\mathcal{I}_{\mu}$
implies that $\lambda$ is not integral.

Assume now that $\lambda_1+\lambda_2=\mu_1+\mu_2= 0$. Then from 
Lemmata~\ref{lem4} and \ref{lempar} it follows that 
$\mathtt{c}-(\lambda_1-\lambda_2+1)^2$ and
$\mathtt{c}-(\mu_1-\mu_2+1)^2$ belong to $\mathcal{I}_{\lambda}$ and 
$\mathcal{I}_{\mu}$, respectively. Hence the equality 
$\mathcal{I}_{\lambda}=\mathcal{I}_{\mu}$  implies the equality
$(\mu_1,\mu_2)=(\lambda_2-1,\lambda_1+1)$. Similarly to the
previous paragraph one also show that the equality
$\mathcal{I}_{\lambda}=\mathcal{I}_{\mu}$ implies that $\lambda$ is not 
integral. This proves necessity of the conditions \eqref{prop5-new.1} 
and \eqref{prop5-new.2}.

Let us now prove sufficiency of the condition \eqref{prop5-new.2}.
Assume that the weight $\lambda\in \mathfrak{h}^*_{\overline{0}}$ is 
atypical and not integral. Set $z=-2(\lambda_1-\lambda_2)-2$ and
$(\mu_1,\mu_2)=(\lambda_2-1,\lambda_1+1)$. 
Consider the $U(\mathfrak{q})$-supermodule
$M=\Theta_{z}(L(\mathcal{V}(\lambda)))$. Then a direct calculation 
(see e.g. \cite[Section~3.5]{Ma}) implies that 
$\mathrm{Res}\, M\cong L^{\mathtt{g}}(\mu)$ 
and hence we have either $M\cong L(\mathcal{V}(\mu))$ or
$M\cong \Pi L(\mathcal{V}(\mu))$. In particular, $\mathcal{I}_{\mu}$
coincides with the annihilator of the supermodule $M$.

Let $N=U'\otimes_{U(\mathfrak{q})}L(\mathcal{V}(\lambda))$ be the
usual induced $U'$-supermodule. Since $U'$ is an Ore localization of
$U$, we have $\mathrm{Ann}_{U'}N=U'\mathcal{I}_{\lambda}U'=:I$. As
$F$ acts injectively on $L(\mathcal{V}(\lambda))$ (since $\lambda$ is
not integral, see Lemma~\ref{lem4}), the supermodule $N$ contains $L(\mathcal{V}(\lambda))$ as a subspace and hence is not trivial.
Thus $I$ is a proper ideal of $U'$, which is obviously stable with
respect to all inner automorphisms of $U'$ (i.e. 
$\theta_z(I)= F^zIF^{-z}\subset I$ for all $z\in\mathbb{Z}$). 

Now let $u\in I$. The polynomiality of $\theta_z$, $z\in\Bbbk$, 
says that there exist $k\in\mathbb{N}$ and $a_0,\dots,a_k\in U'$
such that for all $z\in\Bbbk$
\begin{displaymath}
\theta_z(u)=\sum_{i=0}^k z^i a_i. 
\end{displaymath}
Set $b_j:=\theta_j(u)$, $j=0,1,\dots,k$. Then $b_j\in I$ by the previous 
paragraph. Inverting the nondegenerate
Vandermonde matrix we can express all $a_i$'s as linear combinations of
$b_j$'s showing that all $a_i\in I$. But then $\theta_z(u)\in I$
for all $z\in\Bbbk$. Thus $\theta_z(I)\subset I$ for all 
$z\in \Bbbk$. Hence if we twist the $U'$-action on
$N$ by $\theta_z$ (as in the definition of the functor $\Theta_z$),
the ideal $I$ will still annihilate the resulting $U'$-supermodule. 
This yields the inclusion $\mathcal{I}_{\lambda}\subset \mathcal{I}_{\mu}$.

Because of the symmetry of $\mu$ and $\lambda$ we obtain 
$\mathcal{I}_{\lambda}=\mathcal{I}_{\mu}$. This proves sufficiency 
of the condition \eqref{prop5-new.2}. Sufficiency of the 
condition \eqref{prop5-new.1} is proved similarly. This completes the proof.
\end{proof}

For $\lambda\in \mathfrak{h}^*_{\overline{0}}$ we denote by
$\mathrm{Irr}_{\lambda}$ the set of classes of simple 
$U(\mathfrak{q})$-supermodules with annihilator $\mathcal{I}_{\lambda}$
up to isomorphism and parity change.
We also denote by $\mathtt{I}_{\lambda}$ the annihilator (in 
$U(\mathtt{g})$) of the module $L^{\mathtt{g}}(\lambda)$ and by
$\mathtt{Irr}^{\mathtt{g}}_{\lambda}$ the set of isomorphism classes of simple
$U(\mathtt{g})$-modules with annihilator $\mathtt{I}_{\lambda}$.
We refer the reader to \cite{Bl}, \cite{Ba} or \cite[Chapter~6]{Ma}
for descriptions of $\mathtt{Irr}^{\mathtt{g}}_{\lambda}$.

\subsection{Category $\mathcal{O}$}\label{s2.65} 

Denote by $\mathcal{O}$ the full subcategory of 
$\mathfrak{q}\text{-}\mathrm{sMod}$ consisting of all finitely generated
weight supermodules, the action of $E$ on which is locally nilpotent.
Denote also by $\mathtt{O}$ the corresponding category of
$\mathtt{g}$-modules. As $U(\mathfrak{q})$ is a finite extension of
$\mathtt{g}$ it follows that $\mathrm{Res}\, \mathcal{O}\subset 
\mathtt{O}$. It then follows that $\mathrm{Ind}\, \mathtt{O}\subset 
\mathcal{O}$. As every object in $\mathtt{O}$ has finite length
(see \cite[Chapter~5]{Ma}), every object in $\mathcal{O}$ has
finite length as well.

For $\lambda\in \mathfrak{h}^*_{\overline{0}}$ we denote by
$\mathcal{O}_{\lambda}$ and $\mathtt{O}_{\lambda}$ the full subcategories 
of $\mathcal{O}$ and $\mathtt{O}$, respectively, consisting of all 
(super)modules $M$ satisfying the condition that $M_{\mu}\neq 0$,
$\mu\in \mathfrak{h}^*_{\overline{0}}$, implies $(\lambda_1-\lambda_2)-
(\mu_1-\mu_2)\in\mathbb{Z}$. The categories $\mathcal{O}$ and $\mathtt{O}$ 
then decompose into a direct sum of $\mathcal{O}_{\lambda}$'s 
and $\mathtt{O}_{\lambda}$'s, respectively.

\subsection{Harish-Chandra bimodules}\label{s2.7} 

Let $\mathfrak{q}\text{-}\mathrm{Mod}\text{-}\mathtt{g}$ denote the
category of all $U(\mathfrak{q})\text{-}U(\mathtt{g})$-(super)bimodules.
A $U(\mathfrak{q})\text{-}U(\mathtt{g})$-bimodule is called a 
{\em Harish-Chandra} bimodule if it is finitely generated and decomposes
into a direct sum of simple finite dimensional $\mathtt{g}$-modules
(each occurring with finite multiplicity) with respect to the adjoint 
action of $\mathtt{g}$. We denote by $\mathcal{H}$ the full subcategory
of $\mathfrak{q}\text{-}\mathrm{Mod}\text{-}\mathtt{g}$ consisting of
all Harish-Chandra bimodules. As any Harish-Chandra 
$U(\mathfrak{q})\text{-}U(\mathtt{g})$-bimodule is a Harish-Chandra 
$U(\mathtt{g})\text{-}U(\mathtt{g})$-bimodule by restriction,
it has finite length (even as 
$U(\mathtt{g})\text{-}U(\mathtt{g})$-bimodule), see \cite[5.7]{BG}.

A typical way to produce Harish-Chandra bimodules is the following:
Let $X$ be a $\mathtt{g}$-module and $Y$ be a $\mathfrak{q}$-supermodule.
Then $\mathrm{Hom}_{\Bbbk}(X,Y)$ has a natural structure of a
$U(\mathfrak{q})\text{-}U(\mathtt{g})$-(super)bimodule. Denote by 
$\mathcal{L}(X,Y)$ the subspace of $\mathrm{Hom}_{\Bbbk}(X,Y)$,
consisting of all elements, the adjoint action of $U(\mathtt{g})$
on which is locally finite. As $U(\mathfrak{q})$ is a finite extension
of $U(\mathtt{g})$, from \cite[Kapitel~6]{Ja} it follows that 
$\mathcal{L}(X,Y)$ is in fact a 
$U(\mathfrak{q})\text{-}U(\mathtt{g})$-subbimodule of 
$\mathrm{Hom}_{\Bbbk}(X,Y)$. In particular, $\mathcal{L}(X,Y)$
is a $U(\mathtt{g})$-bimodule by restriction. Under some natural 
conditions (for examples when both $X$ and $Y$ are simple (super)modules) 
one easily verifies that $\mathcal{L}(X,Y)$ is a Harish-Chandra bimodule.
For $\lambda\in \mathfrak{h}^*_{\overline{0}}$ denote by 
$\mathcal{H}_{\lambda}^{1}$ the full subcategory of 
$\mathcal{H}$, which consists of all bimodules $B$ satisfying
$B(\mathtt{c}-(\lambda_1-\lambda_2+1)^2)=0$.
Obviously, $\mathcal{H}_{\lambda}^{1}$ is an abelian category with finite
direct sums. We will need the following generalization
of \cite[Theorem~5.9]{BG}:

\begin{proposition}\label{propHC}
Let $\lambda\in \mathfrak{h}^*_{\overline{0}}$ be such that 
$\lambda_1-\lambda_2\not\in\{-2,-3,-4,\dots\}$. Then the functors
\begin{displaymath}
\xymatrix{
\mathcal{H}_{\lambda}^{1}\ar@/^/[rrrr]^{\mathrm{F}:=
{}_-\otimes_{U(\mathtt{g})}
M^{\mathtt{g}}(\lambda)}&&&&
\mathcal{O}_{\lambda}
\ar@/^/[llll]^{\mathrm{G}:=\mathcal{L}(M^{\mathtt{g}}(\lambda),{}_-)}
}
\end{displaymath}
are mutually inverse equivalences of categories.
\end{proposition}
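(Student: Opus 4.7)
The plan is to reduce this to the classical Bernstein--Gelfand equivalence \cite[Theorem~5.9]{BG} by treating the left $U(\mathfrak{q})$-action as structure transported along the underlying $U(\mathtt{g})$-$U(\mathtt{g})$-equivalence. The dominance condition $\lambda_1-\lambda_2\notin\{-2,-3,\dots\}$ is precisely the hypothesis on $\lambda$ (with respect to $\mathfrak{gl}_2$) under which BG applies to $M^{\mathtt{g}}(\lambda)$.

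First I would check well-definedness of both functors. For $B\in\mathcal{H}_{\lambda}^{1}$, the $U(\mathfrak{q})$-supermodule $\mathrm{F}(B)=B\otimes_{U(\mathtt{g})}M^{\mathtt{g}}(\lambda)$ is finitely generated since $B$ has finite length as a $U(\mathtt{g})$-bimodule by \cite[5.7]{BG}; it is a weight supermodule with $\mathfrak{h}_{\overline{0}}$-weights in $\lambda+\mathbb{Z}\alpha$ because the adjoint $\mathtt{g}$-action on any HC bimodule has weights in $\mathbb{Z}\alpha$ (as verified generator by generator using Subsection~\ref{s2.1}), and local $E$-nilpotency follows from finite length together with local $E$-nilpotency on $M^{\mathtt{g}}(\lambda)$; hence $\mathrm{F}(B)\in\mathcal{O}_{\lambda}$. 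For $M\in\mathcal{O}_{\lambda}$, the space $\mathrm{G}(M)=\mathcal{L}(M^{\mathtt{g}}(\lambda),M)$ acquires a left $U(\mathfrak{q})$-action by post-composition; it is Harish-Chandra by the same argument as in \cite[Kapitel~6]{Ja} applied to $\mathrm{Res}\,M\in\mathtt{O}_{\lambda}$, and finite generation over $U(\mathfrak{q})$-$U(\mathtt{g})$ is implied by finite generation over $U(\mathtt{g})$-$U(\mathtt{g})$; the condition $\mathrm{G}(M)(\mathtt{c}-(\lambda_1-\lambda_2+1)^2)=0$ is automatic since the right action of $\mathtt{c}$ on $\mathcal{L}(M^{\mathtt{g}}(\lambda),M)$ comes from the action on $M^{\mathtt{g}}(\lambda)$.

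Next I would introduce the standard unit and counit,
\begin{gather*}
\eta_{B}\colon B\to \mathcal{L}\bigl(M^{\mathtt{g}}(\lambda),B\otimes_{U(\mathtt{g})}M^{\mathtt{g}}(\lambda)\bigr),\quad b\mapsto\bigl(m\mapsto b\otimes m\bigr),\\
\varepsilon_{M}\colon \mathcal{L}(M^{\mathtt{g}}(\lambda),M)\otimes_{U(\mathtt{g})}M^{\mathtt{g}}(\lambda)\to M,\quad f\otimes m\mapsto f(m),
\end{gather*}
both of which are morphisms in the respective categories by construction and are $U(\mathfrak{q})$-equivariant on the left side since the $\mathfrak{q}$-action enters only through the first tensor factor / the target of $f$. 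The crucial observation is that, upon restricting $\mathrm{F}(B)$ to $\mathtt{g}$ and viewing $\mathrm{G}(M)$ as a $U(\mathtt{g})$-$U(\mathtt{g})$-bimodule, the maps $\eta_{B}$ and $\varepsilon_{M}$ coincide with the unit and counit of the classical BG adjunction for $\mathtt{g}$. Hence, block-by-block in the central-character decomposition on the right of $\mathcal{H}_{\lambda}^{1}$ (equivalently, by the $H_1+H_2$ and $\mathtt{c}$ eigenvalues, the former being equal on both sides for any HC bimodule by the $\mathbb{Z}\alpha$-grading), \cite[Theorem~5.9]{BG} immediately implies that $\eta_{B}$ and $\varepsilon_{M}$ are isomorphisms of $U(\mathtt{g})$-bimodules and $U(\mathtt{g})$-modules, respectively.

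The conclusion then follows: bijectivity is detected after forgetting the $U(\mathfrak{q})$-action, while compatibility with that action is already built in. I expect the main obstacle to be the bookkeeping in the last step --- namely, cleanly decomposing $\mathcal{H}_{\lambda}^{1}$ and $\mathcal{O}_{\lambda}$ according to the $H_1+H_2$ eigenvalue and matching the resulting summands with the precise statement of classical BG (which fixes a full central character on the right, not just the Casimir value), and verifying that every object of $\mathcal{O}_{\lambda}$ automatically has $H_1+H_2$ acting by the single scalar $\lambda_1+\lambda_2$ because $\alpha_1+\alpha_2=0$. Once this matching is in place, no genuinely super-specific argument is required beyond the two equivariance checks already noted.
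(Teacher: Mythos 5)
Your proposal is correct in outline but takes a genuinely different route from the paper. The paper establishes the adjunction abstractly via \cite[6.22]{Ja}, invokes finite length of both categories to reduce the claim to checking that $\mathrm{F}$ and $\mathrm{G}$ send simple objects to simple objects, and then finishes with a character-comparison argument: for simple $L$, the BG equivalence at the level of $U(\mathtt{g})$-bimodules shows $\mathrm{F}\mathrm{G}\,L$ has the same character as $L$, and the adjunction counit then forces $\mathrm{F}\mathrm{G}\,L\cong L$. Your route bypasses the finite-length/simples-to-simples machinery: you write down the unit and counit explicitly, observe that as linear maps they literally coincide with the unit and counit of the tensor--hom adjunction over $U(\mathtt{g})$ (the $U(\mathfrak{q})$-action sits entirely in the first tensor factor or the target, so equivariance is automatic), and conclude bijectivity directly from \cite[Theorem~5.9]{BG}. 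This is a cleaner argument: bijectivity is an underlying-set condition, so it can be checked after forgetting the $\mathfrak{q}$-structure, and there is no need to compare characters or to appeal to the structure theory of $\mathcal{O}_\lambda$ as a length category. The paper's route is slightly more indirect but avoids having to write out or even name $\eta$ and $\varepsilon$.

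One correction to your bookkeeping: the verification you anticipate at the very end --- ``every object of $\mathcal{O}_\lambda$ automatically has $H_1+H_2$ acting by the single scalar $\lambda_1+\lambda_2$'' --- is false. The defining condition of $\mathcal{O}_\lambda$ only constrains $\mu_1-\mu_2$ modulo $\mathbb{Z}$ and says nothing about $\mu_1+\mu_2$; the equality $\alpha_1+\alpha_2=0$ does not help because the weights of $M\in\mathcal{O}_\lambda$ need not lie in $\lambda+\mathbb{Z}\alpha$, only in the coset of $\lambda_1-\lambda_2$. So the decomposition of $\mathcal{O}_\lambda$ (and correspondingly of $\mathcal{H}_\lambda^1$) by the $H_1+H_2$-eigenvalue is genuinely nontrivial, not a triviality. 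Fortunately you already propose that decomposition as part of the bookkeeping, so this error does not break the argument --- you simply cannot skip that step. (Note, for what it is worth, that the matching of $H_1+H_2$-blocks between $\mathcal{H}_\lambda^1$ and $\mathcal{O}_\lambda$ is handled equally implicitly in the paper's proof; the paper's phrase ``on the level of $\mathtt{g}$-modules the functors $\dots$ are mutually inverse equivalences of categories by \cite[Theorem~5.9]{BG}'' also needs to be read block-by-block to invoke a theorem that fixes a full right central character.)
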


\begin{proof}
From \cite[6.22]{Ja} it follows that $(\mathrm{F},\mathrm{G})$ is
an adjoint pair of functors. As every object in both
$\mathcal{H}_{\lambda}^{1}$ and $\mathcal{O}_{\lambda}$ has finite
length (see above) by standard arguments (see e.g. proof
of \cite[Theorem~3.7.3]{Ma}) it follows that it is enough to check 
that both functors $\mathrm{F}$ and $\mathrm{G}$ send simple objects
to simple objects.

Under our assumptions we have that $M^{\mathtt{g}}(\lambda)$ is
projective in $\mathtt{O}$ (see \cite[Chapter~5]{Ma}), in particular,
on the level of $\mathtt{g}$ modules the functors 
$\mathcal{L}(M^{\mathtt{g}}(\lambda),{}_-)$ and
${}_-\otimes_{U(\mathtt{g})} M^{\mathtt{g}}(\lambda)$ are
mutually inverse equivalences of categories by \cite[Theorem~5.9]{BG}.
This means, in particular, that if $L\in \mathcal{O}_{\lambda}$
is simple, then the character of the supermodule $\mathrm{F}\mathrm{G}\,L$
coincides with the character of $L$. From the natural transformation
$\mathrm{F}\mathrm{G}\to\mathrm{Id}_{\mathcal{O}_{\lambda}}$ 
(given by adjunction) we 
thus obtain that $\mathrm{F}\mathrm{G}\,L\cong L$. Similarly one
shows that $\mathrm{G}\mathrm{F}$ sends simple objects from
$\mathcal{H}_{\lambda}^{1}$ to simple objects in $\mathcal{O}_{\lambda}$. 
It follows that both $\mathrm{F}$ and $\mathrm{G}$ send simple objects 
to simple objects, which implies the claim of the proposition.
\end{proof}

From Proposition~\ref{propHC} we immediately get:

\begin{corollary}\label{corHC}
Let $\lambda,\mu\in \mathfrak{h}^*_{\overline{0}}$ be such that 
$\mu_1-\mu_2\not\in\{-2,-3,\dots\}$ and
$B$ be a simple Harish-Chandra bimodule satisfying the conditions
$\mathcal{I}_{\lambda}B=B(\mathtt{c}-(\mu_1-\mu_2+1)^2)=0$.
Then we have $(\lambda_1-\lambda_2)-(\mu_1-\mu_2)\in\mathbb{Z}$ and 
$B\cong\mathcal{L}(M^{\mathtt{g}}(\mu),L(\mathcal{V}(\lambda)))$
or $B\cong\mathcal{L}(M^{\mathtt{g}}(\mu),L(\Pi\mathcal{V}(\lambda)))$.
\end{corollary}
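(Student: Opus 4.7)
The plan is to apply the equivalence of categories from Proposition \ref{propHC} with parameter $\mu$, which is permitted by the hypothesis $\mu_1-\mu_2\notin\{-2,-3,\dots\}$. The right-annihilator condition $B(\mathtt{c}-(\mu_1-\mu_2+1)^2)=0$ is exactly the condition that places $B$ in the subcategory $\mathcal{H}_{\mu}^{1}$, so the equivalence is available for $B$.

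First I would set $M:=\mathrm{F}(B)=B\otimes_{U(\mathtt{g})}M^{\mathtt{g}}(\mu)$. Since $\mathcal{H}_{\mu}^{1}$ is a full subcategory of $\mathcal{H}$ closed under subquotients, simplicity of $B$ as a Harish-Chandra bimodule forces simplicity of $B$ in $\mathcal{H}_{\mu}^{1}$, and hence by Proposition \ref{propHC} the image $M$ is simple in $\mathcal{O}_{\mu}$. Because the tensor product is taken over $U(\mathtt{g})$ on the right while the ideal $\mathcal{I}_{\lambda}$ acts on the left, the hypothesis $\mathcal{I}_{\lambda}B=0$ gives $\mathcal{I}_{\lambda}M=(\mathcal{I}_{\lambda}B)\otimes_{U(\mathtt{g})}M^{\mathtt{g}}(\mu)=0$.

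Next, every simple object in $\mathcal{O}_{\mu}$ is a simple highest-weight supermodule, and by the defining condition of $\mathcal{O}_{\mu}$ must therefore be isomorphic to $L(\mathcal{V}(\nu))$ or $\Pi L(\mathcal{V}(\nu))$ for some $\nu\in\mathfrak{h}^{*}_{\overline{0}}$ satisfying $(\nu_1-\nu_2)-(\mu_1-\mu_2)\in\mathbb{Z}$. The annihilator of such a supermodule is, by definition, $\mathcal{I}_{\nu}$; the condition $\mathcal{I}_{\lambda}M=0$ together with the primitivity of $\mathcal{I}_{\nu}$ forces $\mathcal{I}_{\nu}=\mathcal{I}_{\lambda}$. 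Replacing $\lambda$ by $\nu$ within its $\mathcal{I}$-class, which is justified by the classification in Proposition \ref{prop5-new}, yields the required integrality $(\lambda_1-\lambda_2)-(\mu_1-\mu_2)\in\mathbb{Z}$.

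Finally, applying the inverse equivalence $\mathrm{G}$ we recover $B\cong\mathrm{G}(M)=\mathcal{L}(M^{\mathtt{g}}(\mu),M)$, which is one of the two forms asserted, according to the parity of $M$. The only mildly delicate step is the bookkeeping of identifying the highest weight $\nu$ of $M$ with $\lambda$ modulo the ideal class described in Proposition \ref{prop5-new}; the rest of the argument is pure transport of structure along the equivalence $(\mathrm{F},\mathrm{G})$ and requires no new computation.
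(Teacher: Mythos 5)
Your overall strategy — pass $B$ through the equivalence of Proposition \ref{propHC} applied to $\mu$, land on a simple highest weight supermodule in $\mathcal{O}_{\mu}$, and then transport the conclusion back via $\mathrm{G}$ — is exactly what the paper has in mind; the paper gives no explicit argument, just the phrase ``From Proposition \ref{propHC} we immediately get,'' and your first, second, and fourth paragraphs fill that in correctly (in particular, noting that $\mathcal{I}_{\lambda}M = (\mathcal{I}_{\lambda}B)\otimes_{U(\mathtt{g})}M^{\mathtt{g}}(\mu) = 0$ because the tensor is over the right $U(\mathtt{g})$-structure is the right observation, and $\mathrm{G}(M)\cong B$ by the equivalence).

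The gap is in the sentence ``the condition $\mathcal{I}_{\lambda}M=0$ together with the primitivity of $\mathcal{I}_{\nu}$ forces $\mathcal{I}_{\nu}=\mathcal{I}_{\lambda}$.'' What $\mathcal{I}_{\lambda}M=0$ actually gives is only the inclusion $\mathcal{I}_{\lambda}\subseteq\mathrm{Ann}_{U(\mathfrak{q})}(M)=\mathcal{I}_{\nu}$; primitivity of both sides does not upgrade this to an equality, since primitive ideals can be properly nested (already for $U(\mathtt{g})$ in a regular integral central character block the annihilator of the antidominant simple Verma is properly contained in the annihilator of the finite dimensional simple module, and the same kind of inclusion occurs for $U(\mathfrak{q})$). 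Consequently Proposition \ref{prop5-new} does not apply: it classifies \emph{equalities} $\mathcal{I}_{\lambda}=\mathcal{I}_{\mu}$, and cannot be used to ``replace $\lambda$ by $\nu$'' when only an inclusion is known. To make the argument close, one has to read the hypothesis of the corollary as saying that $\mathcal{I}_{\lambda}$ is precisely the left annihilator $\mathrm{Ann}_{U(\mathfrak{q})}(B)$, not merely that $\mathcal{I}_{\lambda}B=0$; this is in fact how the corollary is invoked throughout Section \ref{s3} (there $\lambda$ is always chosen so that $\mathcal{I}_{\lambda}=\mathrm{Ann}(N)$ for the supermodule $N$ under discussion), and with that reading $\mathcal{I}_{\nu}=\mathrm{Ann}(M)=\mathrm{Ann}(B)=\mathcal{I}_{\lambda}$ and Proposition \ref{prop5-new} then finishes exactly as you indicate. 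So the missing ingredient is a justification of this equality of annihilators (or an acknowledgment that the corollary is being read with $\mathcal{I}_{\lambda}=\mathrm{Ann}(B)$), not a new idea; the rest of your argument is sound.
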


For $\lambda\in \mathfrak{h}^*_{\overline{0}}$ define
\begin{displaymath}
\mathcal{L}_{\lambda}=\mathcal{L}(L^{\mathtt{g}}(\lambda),
L(\mathcal{V}(\lambda))),\quad
\mathcal{M}_{\lambda}=\mathcal{L}(L^{\mathtt{g}}(\lambda-\alpha),
L(\mathcal{V}(\lambda))) 
\end{displaymath}
and
\begin{displaymath}
\mathcal{L}'_{\lambda}=
\begin{cases}
\mathcal{L}(M^{\mathtt{g}}(\lambda),
L(\mathcal{V}(\lambda))), & \lambda_1-\lambda_2\not\in \{-2,-3,-4,\dots\};\\
\mathcal{L}(M^{\mathtt{g}}(-\lambda-2),
L(\mathcal{V}(\lambda))), & \text{otherwise}.
\end{cases}
\end{displaymath}

The following lemma is one of our most important technical tools.

\begin{lemma}\label{lem555}
Let $\lambda\in \mathfrak{h}^*_{\overline{0}}$.
\begin{enumerate}[(i)]
\item\label{lem555.1}
If $\lambda_1-\lambda_2\not\in\mathbb{Z}\setminus\{-1\}$, then 
$\mathcal{L}_{\lambda}\cong \mathcal{L}'_{\lambda}$.
\item\label{lem555.2}
If $\lambda_1-\lambda_2\in\{-2,-3,-4,\dots\}$, then
for any simple infinite di\-men\-si\-o\-nal $\mathtt{g}$-module $X$
there is a short exact sequence
\begin{equation}\label{eq3}
0\to \mathrm{Ker}\to
\mathcal{L}'_{\lambda}\otimes_{U(\mathtt{g})}X\to
\mathcal{L}_{\lambda}\otimes_{U(\mathtt{g})}X\to 0
\end{equation}
of $\mathfrak{q}$-supermodules, where $\mathrm{Ker}$ is finite dimensional.
\end{enumerate}
\end{lemma}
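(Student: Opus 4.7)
Plan. I would prove the two parts separately. For part (i), the claim is essentially a definition chase: the hypothesis $\lambda_1-\lambda_2\notin\mathbb{Z}\setminus\{-1\}$ forces $\lambda_1-\lambda_2\notin\mathbb{N}_0$, so the Verma module $M^{\mathtt{g}}(\lambda)$ is simple and equals $L^{\mathtt{g}}(\lambda)$. Since also $\lambda_1-\lambda_2\notin\{-2,-3,\dots\}$, the first case of the definition of $\mathcal{L}'_{\lambda}$ applies, so that $\mathcal{L}'_{\lambda}=\mathcal{L}(M^{\mathtt{g}}(\lambda),L(\mathcal{V}(\lambda)))=\mathcal{L}(L^{\mathtt{g}}(\lambda),L(\mathcal{V}(\lambda)))=\mathcal{L}_{\lambda}$.

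For part (ii), set $\mu=(\lambda_2-1,\lambda_1+1)$, the $\mathfrak{gl}_2$-realization of $-\lambda-2$ (which preserves the $\mathfrak{gl}_2$-central character). Since $\lambda_1-\lambda_2\leq-2$, $\mu_1-\mu_2\geq 0$, so $L^{\mathtt{g}}(\mu)$ is finite-dimensional, and $M^{\mathtt{g}}(\lambda)=L^{\mathtt{g}}(\lambda)$. Starting from the standard Verma embedding
\[
0\to M^{\mathtt{g}}(\lambda)\to M^{\mathtt{g}}(\mu)\to L^{\mathtt{g}}(\mu)\to 0
\]
in $\mathtt{O}$ and applying the contravariant left-exact functor $\mathcal{L}(-,L(\mathcal{V}(\lambda)))$ yields an exact sequence of Harish-Chandra bimodules
\[
0\to K\to\mathcal{L}'_{\lambda}\xrightarrow{\phi}\mathcal{L}_{\lambda}\to C\to 0,
\]
where $K=\mathcal{L}(L^{\mathtt{g}}(\mu),L(\mathcal{V}(\lambda)))$. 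The first thing to verify is $K=0$: since $L^{\mathtt{g}}(\mu)$ is finite-dimensional, $\mathcal{L}(L^{\mathtt{g}}(\mu),Z)\cong Z^{\mathtt{g}\text{-loc-fin}}\otimes L^{\mathtt{g}}(\mu)^{*}$, and by Lemmata~\ref{lem3}--\ref{lem4} the restriction $\mathrm{Res}\,L(\mathcal{V}(\lambda))$ admits a filtration by simple Verma modules $M^{\mathtt{g}}(\nu)$ with $\nu_1-\nu_2<0$, all infinite-dimensional with no nonzero finite-dimensional $\mathtt{g}$-submodules. Hence $L(\mathcal{V}(\lambda))$ contains no nonzero $\mathtt{g}$-locally finite vector, so $K=0$ and $\phi$ is injective at the bimodule level.

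Tensoring $0\to\mathcal{L}'_{\lambda}\to\mathcal{L}_{\lambda}\to C\to 0$ with $X$ over $U(\mathtt{g})$ gives the Tor long exact sequence
\[
\mathrm{Tor}^{U(\mathtt{g})}_{1}(C,X)\to\mathcal{L}'_{\lambda}\otimes_{U(\mathtt{g})}X\to\mathcal{L}_{\lambda}\otimes_{U(\mathtt{g})}X\to C\otimes_{U(\mathtt{g})}X\to 0.
\]
I would then use Proposition~\ref{propHC} applied to $\mu$ (valid since $\mu_1-\mu_2\geq 0$) to identify $\mathcal{L}'_{\lambda}=\mathrm{G}(L(\mathcal{V}(\lambda)))$ as a simple Harish-Chandra bimodule in $\mathcal{H}^{1}_{\mu}$. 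Under the equivalence, $C$ corresponds to $\mathrm{F}(\mathcal{L}_{\lambda})/L(\mathcal{V}(\lambda))$, and expanding $\mathrm{F}(\mathcal{L}_{\lambda})=\mathcal{L}_{\lambda}\otimes_{U(\mathtt{g})}M^{\mathtt{g}}(\mu)$ via the BGG resolution above, one sees this quotient is a finite-dimensional object of $\mathcal{O}_{\mu}$. Thus the composition factors of $C$ correspond to finite-dimensional simple $\mathfrak{q}$-super\-mo\-du\-les, and their right $U(\mathtt{g})$-action factors through a finite-codimensional quotient of $U(\mathtt{g})$; for $X$ simple infinite-dimensional, these factors are killed by $\otimes_{U(\mathtt{g})}X$ (yielding the desired surjectivity), while $\mathrm{Tor}_{1}^{U(\mathtt{g})}(C,X)$ is finite-dimensional, computed using the length-one BGG resolution (which reduces the Tor to tensor products with finite-dimensional $\mathtt{g}$-modules).

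The main obstacle is the final identification: showing that $\mathrm{F}(C)$ is \emph{finite-dimensional} in $\mathcal{O}_{\mu}$. This requires a careful analysis of $\mathrm{F}(\mathcal{L}_{\lambda})=\mathcal{L}_{\lambda}\otimes_{U(\mathtt{g})}M^{\mathtt{g}}(\mu)$, tracking through the Bernstein--Gelfand equivalence together with the evaluation map $\mathcal{L}_{\lambda}\otimes_{U(\mathtt{g})}M^{\mathtt{g}}(\lambda)\to L(\mathcal{V}(\lambda))$ coming from the restriction to the submodule $M^{\mathtt{g}}(\lambda)\subset M^{\mathtt{g}}(\mu)$, and verifying that only finite-dimensional contributions appear beyond the distinguished subobject $L(\mathcal{V}(\lambda))$.
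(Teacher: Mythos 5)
Your treatment of part (i) is exactly the paper's: under the hypothesis $M^{\mathtt{g}}(\lambda)=L^{\mathtt{g}}(\lambda)$ and the definitions collapse.

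For part (ii), the opening moves match the paper's proof: you apply $\mathcal{L}(-,L(\mathcal{V}(\lambda)))$ to the short exact sequence $0\to L^{\mathtt{g}}(\lambda)\to M^{\mathtt{g}}(s\cdot\lambda)\to L^{\mathtt{g}}(s\cdot\lambda)\to 0$, and you show the kernel term $\mathcal{L}(L^{\mathtt{g}}(s\cdot\lambda),L(\mathcal{V}(\lambda)))$ vanishes because $\mathrm{Res}\,L(\mathcal{V}(\lambda))$ has no finite-dimensional $\mathtt{g}$-submodule (the paper phrases this via \cite[6.8]{Ja}, i.e. $\mathrm{Hom}_{\mathtt{g}}(L^{\mathtt{g}}(s\cdot\lambda)\otimes V,\mathrm{Res}\,L(\mathcal{V}(\lambda)))=0$ for all finite-dimensional $V$; your isomorphism $\mathcal{L}(E,Z)\cong E^*\otimes Z^{\text{loc-fin}}$ is an equivalent reformulation). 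This yields the inclusion $\mathcal{L}'_\lambda\hookrightarrow\mathcal{L}_\lambda$ with cokernel $C$, as in the paper.

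The genuine gap is the one you flag yourself: you do not establish that $C$ is finite dimensional, and your proposed route through Proposition~\ref{propHC} and the computation of $\mathrm{F}(\mathcal{L}_\lambda)=\mathcal{L}_\lambda\otimes_{U(\mathtt{g})}M^{\mathtt{g}}(\mu)$ is not carried out. This is precisely the content of the lemma --- everything downstream of it is routine. The paper closes it by an explicit multiplicity comparison on the adjoint action side, not via the equivalence: for any finite-dimensional $L^{\mathtt{g}}(\nu)$, either the $H_1+H_2$-central characters force it to occur in neither $\mathcal{L}'_\lambda$ nor $\mathcal{L}_\lambda$, or, for $\nu_1-\nu_2$ large enough, the decomposition of $L^{\mathtt{g}}(\nu)\otimes L^{\mathtt{g}}(s\cdot\lambda)$ has no central character in common with the subquotients of $\mathrm{Res}\,L(\mathcal{V}(\lambda))$; by \cite[6.8]{Ja} this forces the adjoint multiplicity of $L^{\mathtt{g}}(\nu)$ in $\mathcal{L}'_\lambda$ and $\mathcal{L}_\lambda$ to agree for all but finitely many $\nu$, whence $C$ is finite dimensional. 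Note that translating the claim to ``$\mathrm{F}(C)$ is finite-dimensional in $\mathcal{O}_\mu$'' via the equivalence does not make it easier: under that equivalence, $C$ finite dimensional and $\mathrm{F}(C)$ finite dimensional are equivalent (both say all composition factors correspond to $L^{\mathtt{g}}(\mu)$), so no work is saved.

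A secondary soft spot: your final $\mathrm{Tor}_1$ bound ``computed using the length-one BGG resolution'' is not quite right as stated, since that sequence is a resolution in $\mathtt{O}$, not a projective resolution of $X$ (or of $C$) over $U(\mathtt{g})$. The paper's argument is cleaner and should replace it: $U(\mathtt{g})$ is noetherian of finite global dimension, so $X$ admits a finite resolution by finitely generated free modules; tensoring with the finite-dimensional bimodule $C$ yields a finite complex of finite-dimensional spaces, so every $\mathrm{Tor}_i^{U(\mathtt{g})}(C,X)$ is finite dimensional. (Your separate argument that $C\otimes_{U(\mathtt{g})}X=0$ via a simple quotient $N$ and the dichotomy $N$ finite- versus infinite-dimensional is exactly the paper's argument and is correct, once $C$ is known to be finite dimensional.)
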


\begin{proof}
The statement \eqref{lem555.1} is trivial for in this case we have 
an isomorphism $M^{\mathtt{g}}(\lambda)\cong L^{\mathtt{g}}(\lambda)$ (see 
\cite[Chapter~3]{Ma}).

To prove \eqref{lem555.2} we assume that 
$\lambda_1-\lambda_2\in \{-2,-3,-4,\dots\}$.
Applying the left exact functor $\mathcal{L}({}_-,\mathcal{V}(\lambda))$ 
to the short exact sequence
\begin{equation}\label{eq301}
0\to L^{\mathtt{g}}(\lambda)\to M^{\mathtt{g}}(-\lambda-\alpha)\to
L^{\mathtt{g}}(-\lambda-\alpha) \to 0
\end{equation}
we obtain the following exact sequence:
\begin{displaymath}
0\to \mathcal{L}(L^{\mathtt{g}}(-\lambda-\alpha),\mathcal{V}(\lambda))\to
\mathcal{L}'_{\lambda}\to \mathcal{L}_{\lambda}.
\end{displaymath}
The socle of the $\mathtt{g}$-module 
$\mathrm{Res}\, \mathcal{V}(\lambda)$ does not contain
finite dimensional submodules because of our choice of $\lambda$
and Lemma~\ref{lem4}. As $L^{\mathtt{g}}(-\lambda-\alpha)$ is 
finite dimensional, for every  finite dimensional  $\mathtt{g}$-module 
$V$ we thus have
\begin{equation}\label{eq2}
\mathrm{Hom}_{\mathtt{g}}(L^{\mathtt{g}}(-\lambda-\alpha)\otimes_{\Bbbk}
V,\mathrm{Res}\, \mathcal{V}(\lambda))=0.
\end{equation}
Hence $\mathcal{L}(L^{\mathtt{g}}(-\lambda-\alpha),\mathcal{V}(\lambda))=0$
by \cite[6.8]{Ja}. This equality implies existence of a natural inclusion
$\mathcal{L}'_{\lambda}\hookrightarrow \mathcal{L}_{\lambda}$.
Let us denote by $B$ the cokernel of this inclusion.

Let $\mu\in \mathfrak{h}^*_{\overline{0}}$ be such that 
$L^{\mathtt{g}}(\mu)$ is finite dimensional. If $\mu_1+\mu_2\neq 0$, then
the action of the central element $H_1+H_2$ on the modules 
$L^{\mathtt{g}}(\lambda)$ and $M^{\mathtt{g}}(-\lambda-\alpha)$ on one side
and the module $V\otimes_{\Bbbk}\mathrm{Res}\, \mathcal{V}(\lambda)$ on
the other side do not agree and hence such $V$ occurs as a direct 
summand (with respect to the adjoint action of $\mathtt{g}$) in 
neither $\mathcal{L}'_{\lambda}$ nor $\mathcal{L}_{\lambda}$.

If $\mu_1+\mu_2=0$ and $\mu_1-\mu_2=m\in \mathbb{N}_0$ is big enough,
then the module $L^{\mathtt{g}}(\mu)\otimes_{\Bbbk} 
L^{\mathtt{g}}(-\lambda-\alpha)$ decomposes into a direct sum of 
$L^{\mathtt{g}}(\nu)$, where $\nu\in\{-\lambda-\alpha+\mu,
-\lambda-2\alpha+\mu,\dots,\mu+\lambda+\alpha\}$
(see e.g. \cite[Section~1.4]{Ma}). For all $\mu$ big enough the
central characters of all these $L^{\mathtt{g}}(\nu)$ are different 
from the central characters of all simple subquotients in 
$\mathrm{Res}\, \mathcal{V}(\lambda)$. Hence from \cite[6.8]{Ja}
it follows that $L^{\mathtt{g}}(\mu)$ occurs with the
same multiplicity in $\mathcal{L}'_{\lambda}$ and $\mathcal{L}_{\lambda}$ 
with respect to the adjoint action. This implies that the bimodule $B$
is finite dimensional.

Now we claim that 
\begin{equation}\label{eq4}
B\otimes_{U(\mathtt{g})}X=0. 
\end{equation}
Assume that this
is not the case. Since $X$ is simple and $B$ is finite dimensional, 
the module $B\otimes_{U(\mathtt{g})}X$ is holonomic and hence has 
finite length. Let $N$ be some simple quotient of 
$B\otimes_{U(\mathtt{g})}X$. Then,  by adjunction, we have
\begin{displaymath}
0\neq \mathrm{Hom}_{\mathfrak{q}}(B\otimes_{U(\mathtt{g})}X,N)= 
\mathrm{Hom}_{\mathtt{g}}(X,\mathrm{Hom}_{\mathfrak{q}}(B,N)).
\end{displaymath}
If $N$ is finite dimensional, then $\mathrm{Hom}_{\mathfrak{q}}(B,N)$
is finite dimensional and thus
$\mathrm{Hom}_{\mathtt{g}}(X,\mathrm{Hom}_{\mathfrak{q}}(B,N))=0$,
a contradiction. If $N$ is infinite dimensional, then
$\mathrm{Hom}_{\mathfrak{q}}(B,N)=0$ as $B$ is finite dimensional,
which is again a contradiction. 

Applying ${}_-\otimes_{U(\mathtt{g})}X$ to the following
short exact sequence of bimodules
\begin{displaymath}
0\to\mathcal{L}'_{\lambda}\to \mathcal{L}_{\lambda}\to B\to 0
\end{displaymath}
and using \eqref{eq4} and the right exactness 
of the tensor product we obtain an exact sequence 
\begin{displaymath}
\mathrm{Tor}_1^{U(\mathtt{g})}(B,X)\to
\mathcal{L}'_{\lambda}\otimes_{U(\mathtt{g})}X
\to \mathcal{L}_{\lambda}\otimes_{U(\mathtt{g})}X \to 0.
\end{displaymath}

Now we just recall that $U(\mathtt{g})$ has finite global dimension. 
So we can take a minimal finite free resolution of $X$ (where each component
will be finitely generated as $U(\mathtt{g})$ is noetherian), tensor 
it with our finite dimensional bimodule $B$ and obtain a finite complex 
of finite dimensional vector spaces. All torsion groups from $B$ to $X$
are homologies of this complex, hence finite dimensional.
The claim \eqref{lem555.2} follows, which completes the proof of 
the proposition.
\end{proof}

\section{Classification of simple $\mathfrak{q}_{2}$-supermodules}\label{s3} 

In this section we prove Theorem~\ref{mthm1} and some related results.
All this is divided into separate steps, organized as subsections.

\subsection{All simple $\mathfrak{q}$-supermodules are subsupermodules 
of some induced supermodules}\label{s3.1} 

We start with the following observation:

\begin{proposition}\label{prop7}
Let $N$ be a simple $\mathfrak{q}$-supermodule. Then there exists 
a simple $\mathtt{g}$-module $L$ such that either $N$ or
$\Pi N$ is a subsupermodule of the supermodule $\mathrm{Ind}\, L$.
\end{proposition}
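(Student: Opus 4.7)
The plan is to exhibit $N$ (or $\Pi N$) as a subsupermodule of $\mathrm{Ind}\, L$ via Frobenius reciprocity, where $L$ is chosen to be a simple $\mathtt{g}$-quotient of the even part of the relevant supermodule. The only structural inputs needed are that $\mathrm{Ind}$ is \emph{right} adjoint to $\mathrm{Res}$ (using the coincidence of induction and coinduction from Subsection~\ref{s2.5}) and that the even part of $N$ is a finitely generated $\mathtt{g}$-module.

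First I would verify that $N_{\overline{0}}$ and $N_{\overline{1}}$ are finitely generated over $U(\mathtt{g})$. Since $N$ is simple it is cyclic over $U(\mathfrak{q})$, and the PBW theorem recalled in Subsection~\ref{s2.1} shows that $U(\mathfrak{q})$ is free of finite rank over $U(\mathtt{g})$. Hence $N$ is finitely generated as a $U(\mathtt{g})$-module; by the Noetherianity of $U(\mathtt{g})$, both $\mathtt{g}$-submodules $N_{\overline{0}}$ and $N_{\overline{1}}$ are finitely generated as well. Since $N\neq 0$, at least one of the two homogeneous components is nonzero, so after possibly replacing $N$ by the simple supermodule $\Pi N$ I may assume $\mathrm{Res}\, N = N_{\overline{0}}\neq 0$.

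Next, using that a nonzero finitely generated module over a Noetherian ring admits a maximal proper submodule (Zorn), I pick a simple $\mathtt{g}$-quotient $L$ of $N_{\overline{0}}$. The quotient map $\pi:N_{\overline{0}}\twoheadrightarrow L$ is a nonzero element of $\mathrm{Hom}_{\mathtt{g}}(\mathrm{Res}\, N,L)$. Applying the adjunction from Subsection~\ref{s2.5} that expresses $\mathrm{Ind}$ as a right adjoint of $\mathrm{Res}$, I obtain a corresponding nonzero supermodule morphism $\varphi:N\to\mathrm{Ind}\, L$. Since $N$ is simple, $\ker\varphi=0$, so $\varphi$ realizes $N$ as a subsupermodule of $\mathrm{Ind}\, L$, as required.

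The only subtle point is a bookkeeping check at the adjunction step: after composing with the projection $\mathtt{g}\text{-}\mathrm{sMod}\to\mathtt{g}\text{-}\mathrm{Mod}$ onto the purely even subcategory, one must confirm that $\mathrm{Hom}_{\mathfrak{q}}(N,\mathrm{Ind}\, L)\cong\mathrm{Hom}_{\mathtt{g}}(N_{\overline{0}},L)$. This is immediate because $L$ is viewed as a purely even $\mathtt{g}$-supermodule, so any morphism of $\mathtt{g}$-supermodules from $\mathrm{Res}_{\mathtt{g}}^{\mathfrak{q}}\, N$ to $L$ must vanish on $N_{\overline{1}}$ and restricts to a $\mathtt{g}$-homomorphism $N_{\overline{0}}\to L$. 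I do not foresee any serious obstacle beyond this verification.
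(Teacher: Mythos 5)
Your proof is correct and follows essentially the same path as the paper's: find a simple $\mathtt{g}$-module quotient $L$ of $\mathrm{Res}\, N$ (up to parity change) and apply Frobenius reciprocity using that $\mathrm{Ind}$, being isomorphic to coinduction, is right adjoint to $\mathrm{Res}$. The only difference is presentational --- the paper first reduces to a cyclic $\mathtt{g}$-module by quotienting out all but one member of a minimal generating set before extracting a simple quotient, whereas you invoke the existence of a maximal proper submodule of a nonzero finitely generated module directly.
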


\begin{proof}
The supermodule $N$ is simple, in particular, it is finitely generated.
As $U(\mathfrak{q})$ is a finite extension of $U(\mathtt{g})$, it 
follows that the $\mathtt{g}$-module 
$\mathrm{Res}_{\mathtt{g}}^{\mathfrak{q}}\, N$ is 
finitely generated as well. Let $\{v_1,\dots,v_k\}$ be some minimal
generating system of $\mathrm{Res}_{\mathtt{g}}^{\mathfrak{q}}\, N$ and 
\begin{displaymath}
M=\mathrm{Res}_{\mathtt{g}}^{\mathfrak{q}}\, 
N/(U(\mathtt{g})\{v_2,\dots,v_k\}).
\end{displaymath}
then the module $M$ is non-zero because of the minimality of
the system $\{v_1,\dots,v_k\}$. Moreover, $M$ is generated by one
element $v$ (the image of $v_1$ in $M$). Let $I=\{u\in
U(\mathtt{g}): u(v)=0\}$. Then $I$ is a left ideal of $U(\mathtt{g})$
different from $U(\mathtt{g})$ and $M\cong U(\mathtt{g})/I$. 
Let $J$ be some maximal left ideal of $U(\mathtt{g})$, 
containing $I$ (such $J$ exists by Zorn's
lemma). Consider the module 
$L=M/JM$, which is a simple quotient of $M$ by construction.
Changing, if necessary, the parity of $N$, we may assume that $L$ is even.
As $M$ is a quotient of $\mathrm{Res}_{\mathtt{g}}^{\mathfrak{q}}\, N$, 
we obtain that $L$ is a simple quotient of either
$\mathrm{Res}\, N$ or $\mathrm{Res}\, \Pi N$. We consider the
first case and the second one is dealt with by similar arguments.

Using the adjunction between $\mathrm{Res}$ and $\mathrm{Ind}$ (see 
Subsection~\ref{s2.5}) we have
\begin{displaymath}
0\neq \mathrm{Hom}_{\mathtt{g}}(\mathrm{Res}\, N, L)=
\mathrm{Hom}_{\mathfrak{q}}(N,\mathrm{Ind}\, L).
\end{displaymath}
The claim of the proposition follows.
\end{proof}

\subsection{Finite length of the restriction}\label{s3.2} 

Now we are ready to prove the first part of Theorem~\ref{mthm1}:

\begin{proposition}\label{prop8}
Let $N$ be a simple $\mathfrak{q}$-supermodule. Then the 
$\mathtt{g}$-module $\mathrm{Res}_{\mathtt{g}}^{\mathfrak{q}}\, N$ 
has finite length.
\end{proposition}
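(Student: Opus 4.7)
The plan is to realize $\mathrm{Res}\,N$ as a $\mathtt{g}$-submodule of a tensor product $F\otimes_{\Bbbk}L$, with $F$ finite-dimensional and $L$ a simple $\mathtt{g}$-module, and then invoke the classical fact that such tensor products have finite length.

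First I dispose of the trivial case $N_{\overline{0}}=0$, in which $\mathrm{Res}\,N=0$ automatically has length zero. Assuming $N_{\overline{0}}\ne 0$, I observe that $N_{\overline{0}}$ is a nonzero finitely generated $\mathtt{g}$-module: $N$ is finitely generated over $U(\mathfrak{q})$, which is a finite extension of $U(\mathtt{g})$, so $N$ is finitely generated over $U(\mathtt{g})$, and $N_{\overline{0}}$ is a direct summand of $N$ as a $\mathtt{g}$-module. Since $U(\mathtt{g})$ is Noetherian, Zorn's lemma yields a simple quotient $L$ of $N_{\overline{0}}$. By the $(\mathrm{Res},\mathrm{Ind})$-adjunction recalled in Subsection~\ref{s2.5}, the nonzero projection $\mathrm{Res}\,N\twoheadrightarrow L$ corresponds to a nonzero morphism $\varphi:N\to\mathrm{Ind}\,L$ in $\mathfrak{q}\text{-}\mathrm{sMod}$, which is injective because $N$ is simple; hence $N$ embeds into $\mathrm{Ind}\,L$. (Alternatively, this embedding can be obtained directly from Proposition~\ref{prop7}, at the cost of handling a parity bookkeeping.)

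Applying the exact functor $\mathrm{Res}$ and using the PBW-based identification
\begin{displaymath}
\mathrm{Res}\circ\mathrm{Ind}\;\cong\;\Bigl(\bigwedge\mathfrak{q}_{\overline{1}}\otimes_{\Bbbk}{}_-\Bigr)_{\overline{0}}
\end{displaymath}
from Subsection~\ref{s2.5}, I obtain an embedding $\mathrm{Res}\,N\hookrightarrow F\otimes_{\Bbbk}L$ of $\mathtt{g}$-modules, where $F:=(\bigwedge\mathfrak{q}_{\overline{1}})_{\overline{0}}$ is an $8$-dimensional $\mathtt{g}$-module. Since a submodule of a finite-length module is itself of finite length, the problem reduces to showing that $F\otimes_{\Bbbk}L$ has finite length.

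This last point is the main obstacle, but it is a classical result: for a reductive Lie algebra, the tensor product of a simple module with a finite-dimensional one has finite length; I would cite \cite{BG}. For the reader's convenience, the essential ingredient is that $L$ has a central character $\chi$ by Schur's lemma (applicable since $\Bbbk$ is uncountable and algebraically closed), which forces $Z(\mathtt{g})$ to act on $F\otimes_{\Bbbk}L$ with generalized eigenvalues in a finite set determined by $\chi$ and the central characters of the composition factors of $F$. This decomposes $F\otimes_{\Bbbk}L$ into finitely many generalized central character blocks, each finitely generated with fixed generalized central character; finite length in each such block is then a consequence of the standard Harish-Chandra-style analysis, which is especially concrete for $\mathfrak{gl}_2$ via the description of the primitive quotients of $U(\mathfrak{gl}_2)$ as generalized Weyl algebras \cite{Ba,Bl}.
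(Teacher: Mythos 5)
Your argument follows the paper's proof in its overall shape: embed $N$ into $\mathrm{Ind}\,L$ for a suitable simple $\mathtt{g}$-module $L$ (your Zorn/adjunction derivation via $N_{\overline{0}}$ is an equivalent variant of Proposition~\ref{prop7}), then use $\mathrm{Res}\circ\mathrm{Ind}\cong\bigl(\bigwedge\mathfrak{q}_{\overline{1}}\otimes_{\Bbbk}{}_-\bigr)_{\overline{0}}$ to place $\mathrm{Res}\,N$ inside a tensor product of $L$ with a finite-dimensional module. Where you diverge is the final step. The paper invokes the explicit classification of simple $\mathfrak{gl}_2$-modules \cite{Bl,Ba} to conclude that $L$ is \emph{holonomic}, notes that tensoring with a finite-dimensional module does not raise Gelfand--Kirillov dimension, and cites \cite{Ba3} for the finite length of holonomic modules. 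You instead appeal to the Bernstein--Gelfand/Kostant theorem that a finite-dimensional module tensored with a simple module has finite length. Both routes are sound; the paper's is tied tightly to the $\mathfrak{gl}_2$-specific classification, while yours would work verbatim for any reductive even part, and the sharper reference is actually \cite{Ko}, already present in the bibliography.

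Two caveats. First, the ``reader's convenience'' sketch after your citation of \cite{BG} contains a genuine gap: a finitely generated $U(\mathtt{g})$-module with a fixed generalized central character need \emph{not} have finite length --- for instance, $U(\mathfrak{sl}_2)/(\ker\chi)$ is cyclic with central character $\chi$, has Gelfand--Kirillov dimension $2$, and is not Artinian; likewise $U/(\ker\chi)^2$ is cyclic with generalized central character. The Kostant/Bernstein--Gelfand argument needs an additional constraint --- precisely a Gelfand--Kirillov-dimension (holonomicity) bound, as in the paper's route, or the lattice isomorphism with sub-bimodules of a finite-length Harish--Chandra bimodule --- so the sketch cannot stand in for the cited theorem. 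Second, a minor point of statement: Proposition~\ref{prop8} concerns $\mathrm{Res}_{\mathtt{g}}^{\mathfrak{q}}\,N=N_{\overline{0}}\oplus N_{\overline{1}}$, whereas you bound only $\mathrm{Res}\,N=N_{\overline{0}}$; this is repaired by running the same argument for $\Pi N$ (or by working with $\mathrm{Res}_{\mathtt{g}}^{\mathfrak{q}}$ and the full $16$-dimensional $\bigwedge\mathfrak{q}_{\overline{1}}$, as the paper does).
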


\begin{proof}
By Proposition~\ref{prop7}, there exists a simple $\mathtt{g}$-module 
$L$ such that either $N$ or $\Pi N$ is a subsupermodule of the 
supermodule $\mathrm{Ind}\, L$. Changing, if necessary, the parity of $N$
we may assume that $N$ is a subsupermodule of $\mathrm{Ind}\, L$.
Hence $\mathrm{Res}_{\mathtt{g}}^{\mathfrak{q}}\, N$ 
is a submodule of the $\mathtt{g}$-module
$\mathrm{Res}_{\mathtt{g}}^{\mathfrak{q}}\circ\mathrm{Ind}\, L$. 
The latter is isomorphic to the
$\mathtt{g}$-module $\bigwedge \mathfrak{q}_{\overline{1}}
\otimes_{\Bbbk} L$, see Subsection~\ref{s2.5}.

From the classification of all simple $\mathtt{g}$-modules (see
\cite{Bl,Ba} or \cite[Chapter~6]{Ma}) we obtain that the module
$L$ is holonomic in the sense of \cite{Ba3}. 
As tensoring with the finite dimensional module
$\bigwedge \mathfrak{q}_{\overline{1}}$
cannot increase the Gelfand-Kirillov dimension of the module $L$, 
it follows that the module $\bigwedge \mathfrak{q}_{\overline{1}}
\otimes_{\Bbbk} L$ is holonomic as well. Hence it has finite
length, see \cite[Section~3]{Ba3}. This implies that the submodule
$\mathrm{Res}_{\mathtt{g}}^{\mathfrak{q}}\, N$ has 
finite length as well. The proof is complete.
\end{proof}

\subsection{Finite-dimensional supermodules}\label{s3.3} 
From Lemma~\ref{lem5} and Proposition~\ref{prop5-new} we have a complete
description of all different primitive ideals in $U(\mathfrak{q})$.
We approach our classification via a case-by-case analysis and start
with the easiest case of finite dimensional supermodules.

\begin{lemma}\label{lem9}
Let $\lambda\in\mathfrak{h}^*_{\overline{0}}$ be either zero
or dominant. Then
$L(\mathcal{V}(\lambda))$ is a unique (up to isomorphism
and parity change) 
simple supermodule with annihilator $\mathcal{I}_{\lambda}$.
\end{lemma}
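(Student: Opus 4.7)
The plan is to reduce the problem to counting finite-dimensional simples and then invoking Proposition~\ref{prop5-new} to match weights. The key observation is that for $\lambda$ either zero or dominant, the supermodule $L(\mathcal{V}(\lambda))$ is finite dimensional: this follows from Lemma~\ref{lem2} (or can be read off from Lemma~\ref{lem4}, where the restriction to $\mathtt{g}$ is shown to be finite dimensional when $\lambda$ is dominant). Consequently the quotient $U(\mathfrak{q})/\mathcal{I}_{\lambda}$, which embeds into $\mathrm{End}_{\Bbbk}(L(\mathcal{V}(\lambda)))$ by definition of the annihilator, is a finite-dimensional algebra.

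First I would let $N$ be an arbitrary simple $\mathfrak{q}$-supermodule with $\mathrm{Ann}_{U(\mathfrak{q})}(N)=\mathcal{I}_{\lambda}$. Then $N$ is naturally a simple supermodule over the finite-dimensional superalgebra $U(\mathfrak{q})/\mathcal{I}_{\lambda}$, and hence $N$ is itself finite dimensional. Applying Lemma~\ref{lem2} (the exhaustive list of finite-dimensional simple $\mathfrak{q}$-supermodules), we conclude that $N\cong L(\mathcal{V}(\mu))$ or $N\cong \Pi L(\mathcal{V}(\mu))$ for some $\mu\in\mathfrak{h}^*_{\overline{0}}$ which is itself either zero or dominant.

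It remains to identify $\mu$ with $\lambda$. From our setup we have $\mathcal{I}_{\mu}=\mathrm{Ann}_{U(\mathfrak{q})}(N)=\mathcal{I}_{\lambda}$, and both $\lambda$ and $\mu$ are integral (since they are either zero or dominant, and a dominant weight satisfies $\lambda_1-\lambda_2\in\mathbb{N}\subset\mathbb{Z}$). Therefore the two exceptional cases \eqref{prop5-new.1} and \eqref{prop5-new.2} of Proposition~\ref{prop5-new}, which both require $\lambda$ to be non-integral, are ruled out. We conclude $\lambda=\mu$, whence $N\cong L(\mathcal{V}(\lambda))$ or $N\cong \Pi L(\mathcal{V}(\lambda))$, as required.

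There is no real obstacle here — the lemma is essentially a packaging of Lemma~\ref{lem2} together with Proposition~\ref{prop5-new}. The only step that needs to be spelled out carefully is the finite dimensionality of $U(\mathfrak{q})/\mathcal{I}_{\lambda}$, which relies on $L(\mathcal{V}(\lambda))$ being finite dimensional; once this is in place, everything else is automatic.
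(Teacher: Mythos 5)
Your proof is correct and follows essentially the same route as the paper: Lemma~\ref{lem2} gives finite dimensionality of $L(\mathcal{V}(\lambda))$, hence of $U(\mathfrak{q})/\mathcal{I}_{\lambda}$, forcing any simple supermodule annihilated by $\mathcal{I}_{\lambda}$ to be finite dimensional; then Lemma~\ref{lem2} again classifies such simples and Proposition~\ref{prop5-new} pins down the weight (the exceptional cases being excluded by integrality). You simply spell out the intermediate steps more explicitly than the paper does.
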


\begin{proof}
From Lemma~\ref{lem2} it follows that under our assumptions we
have $\dim U(\mathfrak{q})/\mathcal{I}_{\lambda}<\infty$. Therefore any
simple supermodule $L$ with annihilator $\mathcal{I}_{\lambda}$ must be
finite dimensional. Now the claim follows from Lemma~\ref{lem2}
and Proposition~\ref{prop5-new}.
\end{proof}

\subsection{Atypical supermodules}\label{s3.4} 

Surprisingly enough the second easiest case is that of atypical
supermodules. This is due to the easiest possible structure of atypical
simple highest weight supermodules (see Lemma~\ref{lem4}).
Note that the case of all (in particular, 
atypical) finite di\-men\-si\-o\-nal supermodules
has already been taken care of in Subsection~\ref{s3.3}. In the present
subsection we consider the case of atypical infinite dimensional 
supermodules and classify such supermodules via certain simple 
$U(\mathtt{g})$-modules. We also explicitly describe the restriction of 
each simple infinite dimensional atypical $U(\mathfrak{q})$-supermodule to 
$U(\mathtt{g})$.

\begin{proposition}\label{prop10}
Let $t\in \Bbbk\setminus \frac{1}{2}\mathbb{N}_0$ and $\lambda=(t,-t)$.
\begin{enumerate}[(i)]
\item\label{prop10.1} 
The following correspondence is a bijection between 
$\mathtt{Irr}^{\mathtt{g}}_{\lambda}$ and $\mathrm{Irr}_{\lambda}$:
\begin{displaymath}
\begin{array}{rcl}
\mathtt{Irr}^{\mathtt{g}}_{\lambda}&\leftrightarrow&\mathrm{Irr}_{\lambda}\\
L&\mapsto&\displaystyle
 \mathcal{L}_{\lambda}\otimes_{U(\mathtt{g})}L
\end{array}
\end{displaymath}
\item\label{prop10.2}
For any $L\in \mathtt{Irr}^{\mathtt{g}}_{\lambda}$ the module 
$\mathrm{Res}_{\mathtt{g}}^{\mathfrak{q}}\, 
\mathcal{L}_{\lambda}\otimes_{U(\mathtt{g})}L$ 
is semi-simple and we have 
$\mathrm{Res}_{\mathtt{g}}^{\mathfrak{q}}\, 
\mathcal{L}_{\lambda}\otimes_{U(\mathtt{g})}L\cong
L\oplus L$ and $\mathrm{Res}\,\mathcal{L}_{\lambda}
\otimes_{U(\mathtt{g})}L\cong L$.
\end{enumerate}
\end{proposition}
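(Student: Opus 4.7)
The plan is to first establish part~(ii) by computing $\mathrm{Res}^{\mathfrak{q}}_{\mathtt{g}}\,\mathcal{L}_\lambda$ as a $\mathtt{g}$-bimodule, and then to deduce (i) by combining Corollary~\ref{corHC} with Lemma~\ref{lem555}. For part~(ii), Lemma~\ref{lem4}(ii) together with Lemma~\ref{lempar} shows that both parities of $L(\mathcal{V}(\lambda))$ restrict to $L^{\mathtt{g}}(\lambda)$ as $\mathtt{g}$-modules, so $\mathrm{Res}^{\mathfrak{q}}_{\mathtt{g}}\,\mathcal{L}_\lambda$ is isomorphic, as a $\mathtt{g}$-bimodule, to two copies (one in each parity) of the $\mathtt{g}$-locally finite endomorphism bimodule of $L^{\mathtt{g}}(\lambda)$. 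Since $\lambda$ is regular ($t\neq 0$) and $M^{\mathtt{g}}(\lambda)=L^{\mathtt{g}}(\lambda)$ (because $2t\notin\mathbb{N}_0$), the positive solution of Kostant's problem in rank one identifies this endomorphism bimodule with $U(\mathtt{g})/\mathtt{I}_\lambda$; tensoring with any $L\in\mathtt{Irr}^{\mathtt{g}}_\lambda$ just returns $L$. Combining, $\mathrm{Res}^{\mathfrak{q}}_{\mathtt{g}}\,\mathcal{L}_\lambda\otimes_{U(\mathtt{g})}L\cong L\oplus L$ with one copy in each parity, and $\mathrm{Res}\,\mathcal{L}_\lambda\otimes_{U(\mathtt{g})}L\cong L$, which is~(ii).

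Next I would verify that $\mathrm{F}(L):=\mathcal{L}_\lambda\otimes_{U(\mathtt{g})}L$ is a simple $\mathfrak{q}$-supermodule with annihilator $\mathcal{I}_\lambda$. Any $\mathfrak{q}$-subsupermodule $V=V_{\overline{0}}\oplus V_{\overline{1}}$ of $\mathrm{F}(L)\cong L\oplus L$ has $V_{\overline{i}}\in\{0,L\}$ by simplicity of $L$, and the odd element $\overline{H}_1+\overline{H}_2\in U(\mathfrak{q})$ centralizes $U(\mathtt{g})$ (a direct check on generators) and acts on $\mathrm{F}(L)$ by a nonzero $\mathtt{g}$-equivariant map between the two parities, inherited from the analogous nonzero intertwiner on $L(\mathcal{V}(\lambda))$ used in the proof of Lemma~\ref{lempar}; this forces $V$ to be $0$ or $\mathrm{F}(L)$. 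The inclusion $\mathcal{I}_\lambda\,\mathrm{F}(L)=0$ is clear from $\mathcal{I}_\lambda\mathcal{L}_\lambda=0$, and matching the actions of $H_1+H_2$ and of $\mathtt{c}$ on $\mathrm{F}(L)$ with those on $L(\mathcal{V}(\lambda))$, combined with Lemma~\ref{lem5} and Proposition~\ref{prop5-new}, pins $\mathrm{Ann}\,\mathrm{F}(L)=\mathcal{I}_\lambda$. Injectivity of $L\mapsto\mathrm{F}(L)$ then follows from~(ii) via Krull-Schmidt applied to $L_1\oplus L_1\cong L_2\oplus L_2$.

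For surjectivity, let $N\in\mathrm{Irr}_\lambda$. Since $\lambda$ is not dominant, $N$ is infinite dimensional, and by Proposition~\ref{prop8} $\mathrm{Res}\,N$ has finite length with Casimir acting by $(2t+1)^2$; I can therefore pick a simple submodule $L\subset\mathrm{Res}\,N$ with annihilator $\mathtt{I}_\lambda$. The bimodule $B:=\mathcal{L}(L,N)$ lies in $\mathcal{H}^1_\lambda$ and is nonzero (it contains the inclusion), hence has a simple subbimodule $B_0$; Corollary~\ref{corHC}, applied with $\mu\in\{\lambda,-\lambda-2\}$ chosen so that $\mu_1-\mu_2\notin\{-2,-3,\dots\}$, identifies $B_0$ with $\mathcal{L}'_\lambda$ or its parity twist. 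Evaluation $B_0\otimes_{U(\mathtt{g})}L\to N$, $\varphi\otimes v\mapsto\varphi(v)$, is a nonzero and hence surjective $\mathfrak{q}$-homomorphism. When $\lambda_1-\lambda_2\notin\{-2,-3,\dots\}$, Lemma~\ref{lem555}(i) gives $\mathcal{L}'_\lambda\cong\mathcal{L}_\lambda$, so this is a surjection $\mathrm{F}(L)\twoheadrightarrow N$ and an isomorphism by simplicity of $\mathrm{F}(L)$. When $\lambda_1-\lambda_2\in\{-2,-3,\dots\}$, Lemma~\ref{lem555}(ii) writes $\mathrm{F}(L)$ as a quotient of $\mathcal{L}'_\lambda\otimes L$ by a finite dimensional subsupermodule; since $N$ is infinite dimensional and simple, the image of that subsupermodule in $N$ must vanish, so the surjection factors through an isomorphism $\mathrm{F}(L)\cong N$.

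The hard part will be the case $\lambda_1-\lambda_2\in\{-2,-3,\dots\}$: there $\mathcal{L}_\lambda$ is not itself simple as a Harish-Chandra bimodule and Corollary~\ref{corHC} cannot be applied with $\mu=\lambda$, so the surjectivity argument must be rerouted through $\mathcal{L}'_\lambda$. The two crucial inputs are Lemma~\ref{lem555}(ii), which provides the finite dimensional correction relating $\mathcal{L}'_\lambda\otimes L$ and $\mathcal{L}_\lambda\otimes L$, and the fact that no simple in $\mathtt{Irr}^{\mathtt{g}}_\lambda$ is finite dimensional (because $\lambda$ is not dominant), which is precisely what kills this correction upon mapping into the infinite-dimensional simple $N$.
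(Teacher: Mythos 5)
Your overall strategy (establish (ii), use it to prove simplicity of $\mathcal{L}_\lambda\otimes_{U(\mathtt g)}L$, then derive surjectivity via $\mathcal{L}(L,N)$, Corollary~\ref{corHC} and Lemma~\ref{lem555}) matches the paper's, and the computation of $\mathrm{Res}^{\mathfrak q}_{\mathtt g}\mathcal{L}_\lambda$ via Kostant's theorem is exactly what the paper does with \cite[7.25]{Ja}. However, there is a genuine gap in your simplicity argument.

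You claim that the odd element $\overline{H}_1+\overline{H}_2$ acts on $\mathrm{F}(L)=\mathcal{L}_\lambda\otimes_{U(\mathtt g)}L$ by a nonzero $\mathtt g$-equivariant map between the two parities and that this "forces $V$ to be $0$ or $\mathrm{F}(L)$". But here $\lambda=(t,-t)$ is atypical, so $(\overline{H}_1+\overline{H}_2)^2=H_1+H_2$ acts by zero: the intertwiner coming from $L(\mathcal{V}(\lambda))$ is a $\mathtt g$-isomorphism in exactly \emph{one} direction and annihilates the other parity (this is precisely what the proof of Lemma~\ref{lempar} records for atypical $\lambda$, and also follows from the identity $(\sqrt{t}-\mathbf{i}\sqrt{-t})(\sqrt{t}+\mathbf{i}\sqrt{-t})=t+(-t)=0$). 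Consequently $\overline{H}_1+\overline{H}_2$ only rules out one of $\mathrm{F}(L)_{\overline 0}$, $\mathrm{F}(L)_{\overline 1}$ as a subsupermodule; the parity that it kills could a priori still be killed by all of $\overline{E},\overline{F},\overline{H}_1-\overline{H}_2$, hence a submodule. The paper closes exactly this loophole by a different argument: if one parity is a subsupermodule, it is annihilated by all of $U(\mathfrak q)_{\overline 1}$, hence its annihilator is a primitive ideal containing $U(\mathfrak q)_{\overline 1}$; by Subsection~\ref{s2.3} only $\mathcal{I}_0$ has this property, so by Lemma~\ref{lem9} this parity is the trivial supermodule, contradicting $\lambda\neq 0$. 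Your proof needs this (or an equivalent) argument.

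A secondary, more minor, gap: in the surjectivity step you pick a simple $\mathtt g$-submodule $L\subset\mathrm{Res}\,N$ and assert directly that $\mathrm{Ann}\,L=\mathtt I_\lambda$. The Casimir value $(2t+1)^2$ together with $H_1+H_2=0$ pins down the central character but not the primitive ideal: when $2t\in\{-2,-3,\dots\}$ there is also the (strictly larger) annihilator of the finite-dimensional module with the same central character. The paper explicitly rules out the finite-dimensional case by adjunction: if $L$ were finite dimensional then $\mathrm{Ind}\,L\cong\bigwedge\mathfrak{q}_{\overline 1}\otimes_\Bbbk L$ would be finite dimensional, and the nonzero map $\mathrm{Ind}\,L\to N$ would force $N$ to be finite dimensional, contradicting Lemma~\ref{lem9}. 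You should include this step. The rest of your surjectivity argument (passing through $\mathcal{L}'_\lambda$ via Corollary~\ref{corHC} and killing the finite-dimensional correction from Lemma~\ref{lem555}(ii) by mapping to the infinite-dimensional simple $N$) is correct and is the same route the paper takes.
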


\begin{proof}
We start with the statement \eqref{prop10.2}. We observe that 
$L^{\mathtt{g}}(\lambda)=M^{\mathtt{g}}(\lambda)$ because of our
restrictions on $\lambda$ (see e.g. \cite[Chapter~3]{Ma}). Consider 
$\mathcal{L}_{\lambda}$ as a $U(\mathtt{g})$-bimodule. We have
\begin{equation}\label{eq1}
\begin{array}{rcl}
\mathcal{L}_{\lambda}&:=& \mathcal{L}(L^{\mathtt{g}}(\lambda),
L(\mathcal{V}(\lambda)))\\
\text{(by Lemmata~\ref{lem4} and \ref{lempar})}
&\cong& \mathcal{L}(L^{\mathtt{g}}(\lambda),
L^{\mathtt{g}}(\lambda)\oplus L^{\mathtt{g}}(\lambda))\\
\text{(by \cite[6.8]{Ja})}&\cong& \mathcal{L}(L^{\mathtt{g}}(\lambda),
L^{\mathtt{g}}(\lambda))\oplus\mathcal{L}(L^{\mathtt{g}}(\lambda),
L^{\mathtt{g}}(\lambda))\\
\text{(by \cite[7.25]{Ja})}&\cong& U(\mathtt{g})/\mathtt{I}_{\lambda}\oplus 
U(\mathtt{g})/\mathtt{I}_{\lambda}.
\end{array}
\end{equation}
As for any $L\in \mathtt{Irr}^{\mathtt{g}}_{\lambda}$ we have 
$\mathtt{I}_{\lambda}L=0$, we deduce that
\begin{displaymath}
\mathrm{Res}_{\mathtt{g}}^{\mathfrak{q}}\, 
\mathcal{L}_{\lambda}\otimes_{U(\mathtt{g})}L\cong
(U(\mathtt{g})/\mathtt{I}_{\lambda}\oplus 
U(\mathtt{g})/\mathtt{I}_{\lambda})\otimes_{U(\mathtt{g})}L\cong
L\oplus L,
\end{displaymath}
proving \eqref{prop10.2}.

To prove \eqref{prop10.1} we first take some 
$L\in \mathtt{Irr}^{\mathtt{g}}_{\lambda}$ and consider the supermodule
$N=\mathcal{L}_{\lambda}\otimes_{U(\mathtt{g})}L$. Then from the
definition of $\mathcal{L}_{\lambda}$ we obtain 
$\mathcal{I}_{\lambda}N=0$. If $N$ is not simple, then from 
\eqref{prop10.2} it follows that either $N_{\overline{0}}$
or $N_{\overline{1}}$ is a simple submodule of $N$. Assume that 
$N_{\overline{0}}$ is a submodule (for $N_{\overline{1}}$ the arguments
are similar). Then $U(\mathfrak{q})N_{\overline{0}}\subset
N_{\overline{0}}$, which implies that $N_{\overline{0}}$ is annihilated
by $U(\mathfrak{q})_{\overline{1}}$. 

If $\mu\in\mathfrak{h}^*_{\overline{0}}$ is nonzero, then from 
Subsection~\ref{s2.3} it follows that at least one of the elements 
$\overline{H}_1$ or $\overline{H}_2$ does not  annihilate $L(\mathcal{V}(\lambda))$. Therefore $\mathcal{I}_{0}$ 
is the only primitive ideal of $U(\mathfrak{q})$, which 
contains $U(\mathfrak{q})_{\overline{1}}$.
From Lemma~\ref{lem9} it thus follows that $N_{\overline{0}}$ 
must be the trivial $U(\mathfrak{q})$-supermodule, which is impossible as
$\lambda\neq 0$. This contradiction shows that the supermodule $N$ is 
simple and hence $N\in \mathrm{Irr}_{\lambda}$. In particular, the
correspondence from \eqref{prop10.1} is well-defined. From 
\eqref{prop10.2} it follows immediately that it is even injective.

So, to complete the proof we have to show that every $N\in 
\mathrm{Irr}_{\lambda}$ has the form 
$\mathcal{L}_{\lambda}\otimes_{U(\mathtt{g})}L$ for some 
$L\in \mathtt{Irr}^{\mathtt{g}}_{\lambda}$.  By Proposition~\ref{prop8},
the $U(\mathtt{g})$-module $\mathrm{Res}\, N$ has finite length.
Let $L$ be a simple submodule of this module $\mathrm{Res}\, N$. 

We have $\mathcal{I}_{\lambda}L=0$. From Lemma~\ref{lem4} 
and \cite[Chapter~3]{Ma} it follows that 
$\mathtt{c}-(\lambda_1-\lambda_2+1)^2\in \mathcal{I}_{\lambda}$ and 
thus $\mathtt{c}-(\lambda_1-\lambda_2+1)^2$
annihilates $L$, which yields that $\mathtt{I}_{\lambda}L=0$.

Now we claim that $L$ cannot be finite dimensional. Indeed, if 
$L$ would be finite dimensional, using the adjunction between
$\mathrm{Ind}$ and $\mathrm{Res}$ we would have
\begin{displaymath}
0\neq \mathrm{Hom}_{\mathtt{g}}(L,\mathrm{Res}\, N)=
\mathrm{Hom}_{\mathfrak{q}}(\mathrm{Ind}\, L,N).
\end{displaymath}
The supermodule $\mathrm{Ind}\, L$, which is isomorphic to the module
$\bigwedge \mathfrak{q}_{\overline{1}} \otimes_{\Bbbk}L$ as
$\mathtt{g}$-module, would thus be finite dimensional,
which would imply that $N$ is finite dimensional as well. This
contradicts Lemma~\ref{lem9} and thus $L$ is infinite dimensional.
This implies $L\in\mathtt{Irr}^{\mathtt{g}}_{\lambda}$ (see \cite[Chapter~3]{Ma}).

Consider the bimodule $\mathcal{L}(L,N)$, which is a Harish-Chandra
bimodule by \cite[6.8]{Ja} and \cite[5.7]{BG}. The inclusion 
$L\hookrightarrow N$ is a $\mathtt{g}$-homomorphism and hence
is annihilated by the adjoint action of $\mathtt{g}$. Hence this inclusion
is a nontrivial element of $\mathcal{L}(L,N)$, which shows that 
$\mathcal{L}(L,N)\neq 0$. 

From the above and $\mathcal{I}_{\lambda}N=0$ we have  
$\mathcal{I}_{\lambda}\mathcal{L}(L,N)=
\mathcal{L}(L,N)\mathtt{I}_{\lambda}=0$. As $\mathcal{L}(L,N)$
has finite length, it thus must contain some simple Harish-Chandra
subbimodule $B$ such that $\mathcal{I}_{\lambda}B=B\mathtt{I}_{\lambda}=0$. 
Changing, if necessary, the parity of $N$ and using Corollary~\ref{corHC} 
we get that $B\cong \mathcal{L}'_{\lambda}$.

This implies the following:
\begin{displaymath}
\begin{array}{rcl}
0&\neq &\mathrm{Hom}_{U(\mathfrak{q})\text{-}U(\mathtt{g})}
(\mathcal{L}'_{\lambda},\mathcal{L}(L,N))\\
\text{(by definition of $\mathcal{L}(L,N)$)}&
\subset  &\mathrm{Hom}_{U(\mathfrak{q})\text{-}U(\mathtt{g})}
(\mathcal{L}'_{\lambda},\mathrm{Hom}_{\Bbbk}(L,N))\\
\text{(by adjunction)}&
\cong  &\mathrm{Hom}_{\mathfrak{q}}
(\mathcal{L}'_{\lambda}\otimes_{U(\mathtt{g})}L,N).
\end{array}
\end{displaymath}
At the same time, applying $\mathrm{Hom}_{\mathfrak{q}}({}_-,N)$
to the short exact sequence \eqref{eq3} we obtain the exact sequence
\begin{displaymath}
0\to  \mathrm{Hom}_{\mathfrak{q}}
(\mathcal{L}_{\lambda}\otimes_{U(\mathtt{g})}L,N)
\to  \mathrm{Hom}_{\mathfrak{q}}
(\mathcal{L}'_{\lambda}\otimes_{U(\mathtt{g})}L,N)
\to  \mathrm{Hom}_{\mathfrak{q}}
(\mathrm{Ker},N).
\end{displaymath}
As $\mathrm{Ker}$ is finite dimensional, while $N$ is simple
infinite dimensional, we get $\mathrm{Hom}_{\mathfrak{q}}
(\mathrm{Ker},N)=0$ and hence 
\begin{displaymath}
\mathrm{Hom}_{\mathfrak{q}}
(\mathcal{L}_{\lambda}\otimes_{U(\mathtt{g})}L,N)
\cong \mathrm{Hom}_{\mathfrak{q}}
(\mathcal{L}'_{\lambda}\otimes_{U(\mathtt{g})}L,N)\neq 0.
\end{displaymath}
As we already know that the supermodule
$\mathcal{L}_{\lambda}\otimes_{U(\mathtt{g})} L$ is simple (see the 
first part of the proof above), 
we conclude that  $\mathcal{L}_{\lambda}\otimes_{U(\mathtt{g})} L\cong N$
using Schur's lemma. This completes the proof of the claim \eqref{prop10.1} 
and of the whole proposition.
\end{proof}

After Proposition~\ref{prop10} it is natural to ask whether
the bimodule $\mathcal{L}_{\lambda}$ can be described explicitly. 
We will actually need this description later on.
This is done in the following:

\begin{lemma}\label{lem11}
Let $\lambda$ be as  in Proposition~\ref{prop10}.
The $U(\mathfrak{q})\text{-}U(\mathtt{g})$-bimodule 
$\mathcal{L}_{\lambda}$ is isomorphic (as a bimodule) 
either to the  $U(\mathfrak{q})\text{-}U(\mathtt{g})$-bimodule
$U(\mathfrak{q})/\mathcal{J}_{\lambda}$ or to the  
$U(\mathfrak{q})\text{-}U(\mathtt{g})$-bimodule
$U(\mathfrak{q})/\mathcal{J}'_{\lambda}$ 
(in which the right $U(\mathtt{g})$-struc\-ture is 
naturally given by the right multiplication).
\end{lemma}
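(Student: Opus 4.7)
The plan is to build a natural surjective $U(\mathfrak{q})$-$U(\mathtt{g})$-bimodule map $U(\mathfrak{q})/\mathcal{J}_{\lambda}\to\mathcal{L}_{\lambda}$ from a chosen $\mathtt{g}$-embedding of $L^{\mathtt{g}}(\lambda)$ into $L(\mathcal{V}(\lambda))_{\overline{0}}$, and to rule out a non-zero cokernel via the classification of simple Harish-Chandra bimodules combined with the BGG-type equivalence. By Lemma~\ref{lem4}\eqref{lem4.2} and Lemma~\ref{lempar}, both summands $L(\mathcal{V}(\lambda))_{\overline{0}}$ and $L(\mathcal{V}(\lambda))_{\overline{1}}$ of $\mathrm{Res}_{\mathtt{g}}^{\mathfrak{q}}\,L(\mathcal{V}(\lambda))$ are $\mathtt{g}$-isomorphic to $L^{\mathtt{g}}(\lambda)$. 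Fix an isomorphism $\phi\colon L^{\mathtt{g}}(\lambda)\xrightarrow{\sim} L(\mathcal{V}(\lambda))_{\overline{0}}$ and put $\Phi(u)(v):=u\cdot\phi(v)$ for $u\in U(\mathfrak{q})$, $v\in L^{\mathtt{g}}(\lambda)$. Left $U(\mathfrak{q})$-linearity is clear, right $U(\mathtt{g})$-linearity follows from $\phi$ being a $\mathtt{g}$-homomorphism, and the image lies in $\mathcal{L}_{\lambda}$ because $U(\mathfrak{q})$ is locally finite under the adjoint action of $\mathtt{g}$. The kernel is visibly $\mathrm{Ann}_{U(\mathfrak{q})}(L(\mathcal{V}(\lambda))_{\overline{0}})=\mathcal{J}_{\lambda}$, so $\bar\Phi\colon U(\mathfrak{q})/\mathcal{J}_{\lambda}\hookrightarrow \mathcal{L}_{\lambda}$ is an injective bimodule map; let $B:=\mathrm{Im}\,\bar\Phi$.

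For surjectivity, Proposition~\ref{prop10} gives that the evaluation map $\mathcal{L}_{\lambda}\otimes_{U(\mathtt{g})}L^{\mathtt{g}}(\lambda)\xrightarrow{\sim} L(\mathcal{V}(\lambda))$ is an isomorphism of simple $\mathfrak{q}$-supermodules; restricted to $B\otimes_{U(\mathtt{g})}L^{\mathtt{g}}(\lambda)$ its image contains $U(\mathfrak{q})\cdot L(\mathcal{V}(\lambda))_{\overline{0}}=L(\mathcal{V}(\lambda))$ by simplicity and is therefore surjective. Right exactness of $-\otimes_{U(\mathtt{g})}L^{\mathtt{g}}(\lambda)$ applied to $0\to B\to \mathcal{L}_{\lambda}\to \mathcal{L}_{\lambda}/B\to 0$ forces $(\mathcal{L}_{\lambda}/B)\otimes_{U(\mathtt{g})}L^{\mathtt{g}}(\lambda)=0$. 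If $\mathcal{L}_{\lambda}/B$ were non-zero, by finite length of $\mathcal{L}_{\lambda}$ as a Harish-Chandra bimodule it would admit a simple Harish-Chandra quotient $B''$ with $\mathcal{I}_{\lambda}B''=B''\mathtt{I}_{\lambda}=0$ and $B''\otimes_{U(\mathtt{g})}L^{\mathtt{g}}(\lambda)=0$. To rule this out, invoke Corollary~\ref{corHC} with $\mu=\lambda$ when $\lambda_1-\lambda_2\notin\{-2,-3,\dots\}$ and with the reflected dominant weight $\mu=(\lambda_2-1,\lambda_1+1)$ otherwise, to identify $B''\cong \mathcal{L}(M^{\mathtt{g}}(\mu),L(\mathcal{V}(\lambda)))$ up to parity. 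Proposition~\ref{propHC} then gives $B''\otimes_{U(\mathtt{g})}M^{\mathtt{g}}(\mu)\cong L(\mathcal{V}(\lambda))$, which is infinite-dimensional. In the first case $M^{\mathtt{g}}(\mu)=L^{\mathtt{g}}(\lambda)$ (as $\lambda$ is antidominant for all $t\notin\tfrac{1}{2}\mathbb{N}_0$), giving the contradiction at once; in the second case, tensoring the standard short exact sequence $0\to L^{\mathtt{g}}(\lambda)\to M^{\mathtt{g}}(\mu)\to L^{\mathtt{g}}(\mu)\to 0$ (with $L^{\mathtt{g}}(\mu)$ finite-dimensional since $\mu$ is dominant) with $B''$ forces $B''\otimes M^{\mathtt{g}}(\mu)$ to surject with zero kernel onto the finite-dimensional $B''\otimes L^{\mathtt{g}}(\mu)$, also a contradiction. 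Hence $B=\mathcal{L}_{\lambda}$ and $\bar\Phi$ is an isomorphism; the alternative $\mathcal{L}_{\lambda}\cong U(\mathfrak{q})/\mathcal{J}'_{\lambda}$ arises by starting from an embedding into $L(\mathcal{V}(\lambda))_{\overline{1}}$ instead.

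The main obstacle is the surjectivity step: the image of $\bar\Phi$ visibly contains the ``even-valued'' part of $\mathcal{L}_{\lambda}$ coming from $U(\mathtt{g})\cdot \phi$, but reaching every element requires combining the action of the odd generators of $\mathfrak{q}$ on $\phi$ with the full classification and BGG-equivalence machinery for Harish-Chandra bimodules (Corollary~\ref{corHC} and Proposition~\ref{propHC}), and a small case-split handling the antidominant vs.\ dominant alternatives for the auxiliary weight $\mu$.
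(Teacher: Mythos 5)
Your construction of the map $\bar\Phi\colon U(\mathfrak{q})/\mathcal{J}_{\lambda}\to\mathcal{L}_{\lambda}$ is the same map the paper uses, but your route to surjectivity is genuinely different. The paper's proof is a short counting argument that avoids the Harish--Chandra machinery entirely: after fixing $\varphi$, it observes that $\overline{H}_1+\overline{H}_2$ commutes with $U(\mathtt{g})$ and interchanges the two parity components of $L(\mathcal{V}(\lambda))$, hence
\[
\mathcal{L}_{\lambda}\supset U(\mathfrak{q})\varphi\supset U(\mathtt{g})\varphi\oplus(\overline{H}_1+\overline{H}_2)U(\mathtt{g})\varphi\cong\bigl(U(\mathtt{g})/\mathtt{I}_{\lambda}\bigr)^{\oplus 2}\cong\mathcal{L}_{\lambda}
\]
as adjoint $\mathtt{g}$-modules (using $\mathcal{L}(L^{\mathtt{g}}(\lambda),L^{\mathtt{g}}(\lambda))\cong U(\mathtt{g})/\mathtt{I}_{\lambda}$ and $\mathrm{Res}\,L(\mathcal{V}(\lambda))\cong L^{\mathtt{g}}(\lambda)\oplus L^{\mathtt{g}}(\lambda)$); since each adjoint isotypic multiplicity is finite, the inclusions must all be equalities. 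This is more elementary than your argument and does not invoke Proposition~\ref{propHC} or Corollary~\ref{corHC} at all, whereas your argument deploys the full BGG-type equivalence and the classification of simple Harish--Chandra bimodules.

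Your surjectivity argument is essentially sound but has a gap in the branch $\lambda_1-\lambda_2\in\{-2,-3,\dots\}$: you assert that $B''\otimes_{U(\mathtt{g})}L^{\mathtt{g}}(\mu)$ is finite-dimensional, giving as the reason only that $L^{\mathtt{g}}(\mu)$ itself is finite-dimensional. That inference is not automatic; it requires knowing that $B''$ is finitely generated as a \emph{one-sided} (right) $U(\mathtt{g})$-module (a standard fact about Harish--Chandra bimodules, cf.\ \cite[Kapitel~6]{Ja}), after which $B''/B''\,\mathrm{Ann}_{U(\mathtt{g})}L^{\mathtt{g}}(\mu)$ is a finitely generated module over the finite-dimensional algebra $U(\mathtt{g})/\mathrm{Ann}\,L^{\mathtt{g}}(\mu)$ and the claim follows. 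You should either cite or prove this finiteness. A cleaner alternative within the paper's own toolbox: identify $B''$ with $\mathcal{L}'_{\lambda}$ (up to parity) via Corollary~\ref{corHC} and then apply Lemma~\ref{lem555}\eqref{lem555.2} with $X=L^{\mathtt{g}}(\lambda)$ to obtain a surjection $B''\otimes_{U(\mathtt{g})}L^{\mathtt{g}}(\lambda)\twoheadrightarrow\mathcal{L}_{\lambda}\otimes_{U(\mathtt{g})}L^{\mathtt{g}}(\lambda)\neq 0$, directly contradicting $B''\otimes_{U(\mathtt{g})}L^{\mathtt{g}}(\lambda)=0$ without ever touching $L^{\mathtt{g}}(\mu)$.
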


\begin{proof}
We have either $\sqrt{t}-\mathbf{i}\sqrt{-t}\neq 0$ or 
$\sqrt{t}+\mathbf{i}\sqrt{-t}\neq 0$
(for otherwise $t=0$, which contradicts our choice of $\lambda$). We
consider the first case $\sqrt{t}-\mathbf{i}\sqrt{-t}\neq 0$.

By Lemma~\ref{lem4} there exists a $\mathtt{g}$-monomorphism
$\varphi:L^{\mathtt{g}}(\lambda)\to L(\mathcal{V}(\lambda))_{\overline{0}}$.
As $\varphi$ commutes with all elements from 
$\mathtt{g}$, the adjoint action of $\mathtt{g}$ on $\varphi$ 
is zero and hence $\varphi\in \mathcal{L}_{\lambda}$. 
Observe that the $\overline{H}_1+\overline{H}_2$ commutes with all elements 
from $\mathfrak{g}$. As $\sqrt{t}-\mathbf{i}\sqrt{-t}\neq 0$, 
from Subsection~\ref{s2.3} it follows that multiplication with 
the element  $\overline{H}_1+\overline{H}_2$ defines a nonzero 
homomorphism from  $L(\mathcal{V}(\lambda))_{\overline{0}}$ 
to $L(\mathcal{V}(\lambda))_{\overline{1}}$. From Lemma~\ref{lem4} we 
even get that this homomorphism is an isomorphism. We have
\begin{displaymath}
\begin{array}{rcl}
\mathcal{L}_{\lambda} &\supset& U(\mathfrak{q}) \varphi \\
\text{(using grading)}&\supset&U(\mathtt{g})\varphi\oplus 
U(\mathtt{g})(\overline{H}_1+\overline{H}_2)\varphi\\
\text{(by above)}&=&U(\mathtt{g})\varphi\oplus 
(\overline{H}_1+\overline{H}_2)U(\mathtt{g})\varphi\\
\text{(by \cite[7.25]{Ja})}&\cong&
\mathcal{L}(L^{\mathtt{g}}(\lambda),L^{\mathtt{g}}(\lambda))\oplus 
\mathcal{L}(L^{\mathtt{g}}(\lambda),L^{\mathtt{g}}(\lambda))\\
\text{(by Lemma~\ref{lem4})}&\cong&\mathcal{L}_{\lambda}.
\end{array}
\end{displaymath}
Hence $U(\mathfrak{q}) \varphi\cong U(\mathfrak{q})/\mathcal{J}_{\lambda}
\cdot \varphi\cong \mathcal{L}_{\lambda}$ (note that
$\mathcal{J}_{\lambda}\varphi=0$ by definition of $\mathcal{J}_{\lambda}$).
Thus we obtain that the map 
\begin{displaymath}
\begin{array}{rcl}
U(\mathfrak{q})/\mathcal{J}_{\lambda}&\to & \mathcal{L}_{\lambda}\\
u+\mathcal{J}_{\lambda} &\mapsto & u\cdot \varphi
\end{array}
\end{displaymath}
is bijective. 
Since $\varphi$ is a  $\mathtt{g}$-homomorphism,
this map is a homomorphism of 
$U(\mathfrak{q})\text{-}U(\mathtt{g})$-bimodules. 

The case $\sqrt{t}+\mathbf{i}\sqrt{-t}\neq 0$ reduces to the case
$\sqrt{t}-\mathbf{i}\sqrt{-t}\neq 0$ changing parity (by Subsection~\ref{s2.3}) 
and hence leads to the appearance of the ideal $\mathcal{J}'_{\lambda}$
instead of $\mathcal{J}_{\lambda}$. This completes the proof.
\end{proof}

\begin{lemma}\label{lem111}
Let $\lambda$ be as  in Proposition~\ref{prop10}.
\begin{enumerate}[(i)]
\item\label{lem111.1} If $\sqrt{t}-\mathbf{i}\sqrt{-t}\neq 0$, then 
the ideal $\mathcal{J}_{\lambda}$ is generated (as a
$U(\mathfrak{q})\text{-}U(\mathtt{g})$-bimodule) by $\mathtt{I}_{\lambda}$
and the element $\overline{H}_1-\overline{H}_2$.
\item\label{lem111.2} If $\sqrt{t}+\mathbf{i}\sqrt{-t}\neq 0$, then 
the ideal $\mathcal{J}'_{\lambda}$ is generated (as a
$U(\mathfrak{q})\text{-}U(\mathtt{g})$-bimodule) by $\mathtt{I}_{\lambda}$
and the element $\overline{H}_1-\overline{H}_2$.
\end{enumerate}
\end{lemma}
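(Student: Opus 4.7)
The plan is to run the argument parallel to Lemma~\ref{lem11}: let $K$ denote the $U(\mathfrak{q})\text{-}U(\mathtt{g})$-subbimodule of $U(\mathfrak{q})$ generated by $\mathtt{I}_\lambda$ and $\overline{H}_1 - \overline{H}_2$. I will prove $K = \mathcal{J}_\lambda$ in part (i); part (ii) proceeds identically, with $v$ and $\overline{v}$ exchanged throughout and $L(\mathcal{V}(\lambda))_{\overline{0}}$ replaced by $L(\mathcal{V}(\lambda))_{\overline{1}}$.

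For $K \subseteq \mathcal{J}_\lambda$, only $\overline{H}_1 - \overline{H}_2 \in \mathcal{J}_\lambda$ requires work, i.e., that it annihilates $L(\mathcal{V}(\lambda))_{\overline{0}} = \Bbbk[F]v$ (using Lemma~\ref{lem4} and $L^{\mathtt{g}}(\lambda) = M^{\mathtt{g}}(\lambda)$ since $t \notin \tfrac{1}{2}\mathbb{N}_0$). On the highest weight vector this equals $(\sqrt{t} + \mathbf{i}\sqrt{-t})\overline{v}$, which is zero in part (i) since the hypothesis forces $\mathbf{i}\sqrt{-t} = -\sqrt{t}$. The identities $[\overline{H}_1 - \overline{H}_2, F] = -2\overline{F}$ and $[F, \overline{F}] = 0$ then give $(\overline{H}_1 - \overline{H}_2)F^n v = -2n F^{n-1}\overline{F}v$, so it remains to show $\overline{F}v = 0$ in $L(\mathcal{V}(\lambda))$. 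For this I would check that $\overline{F}v$ is $\mathfrak{n}$-singular: $\overline{E}\overline{F}v = (H_1 + H_2 - \overline{F}\overline{E})v = 0$ unconditionally, and in part (i) the bracket $[E, \overline{F}] = \overline{H}_1 - \overline{H}_2$ yields $E\overline{F}v = \overline{F}Ev + (\overline{H}_1 - \overline{H}_2)v = 0$. Hence the $\mathfrak{q}$-subsupermodule generated by $\overline{F}v$ in $L(\mathcal{V}(\lambda))$ has all weights in $\lambda - \alpha - \mathbb{N}_0 \alpha$, so it cannot contain $v$; simplicity of $L(\mathcal{V}(\lambda))$ then forces $\overline{F}v = 0$.

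For the reverse inclusion $\mathcal{J}_\lambda \subseteq K$, the brackets $[E, \overline{H}_1 - \overline{H}_2] = -2\overline{E}$ and $[F, \overline{H}_1 - \overline{H}_2] = 2\overline{F}$ yield $\overline{E}, \overline{F} \in K$ via the bimodule structure. Using a PBW basis of $U(\mathfrak{q})$ over $U(\mathtt{g})$ built from the ordered odd basis $(\overline{E}, \overline{F}, \overline{H}_1 + \overline{H}_2, \overline{H}_1 - \overline{H}_2)$, every monomial ending in $\overline{H}_1 - \overline{H}_2$ lies in $U(\mathfrak{q})(\overline{H}_1 - \overline{H}_2) \subseteq K$; the remaining eight monomials reduce modulo $K$ to the span of the images of $1$ and $\overline{H}_1 + \overline{H}_2$ by applying $[\overline{E}, \overline{H}_1 + \overline{H}_2] = 2E$, $[\overline{F}, \overline{H}_1 + \overline{H}_2] = 2F$, $\overline{E}\overline{F} + \overline{F}\overline{E} = H_1 + H_2 \in \mathtt{I}_\lambda$, together with the left $U(\mathfrak{q})$-closure of $K$. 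The right $U(\mathtt{g})$-annihilator of each of these two images in $U(\mathfrak{q})/K$ equals $\mathtt{I}_\lambda$: for $1$, via $K \cap U(\mathtt{g}) \subseteq \mathcal{J}_\lambda \cap U(\mathtt{g}) = \mathtt{I}_\lambda$; for $\overline{H}_1 + \overline{H}_2$, via Lemma~\ref{lem11}, since this element commutes with $U(\mathtt{g})$ and induces a $\mathtt{g}$-isomorphism between the two parity components of $L(\mathcal{V}(\lambda))$. Thus the natural surjection $(U(\mathtt{g})/\mathtt{I}_\lambda)^2 \twoheadrightarrow U(\mathfrak{q})/K \twoheadrightarrow \mathcal{L}_\lambda$ sends $(\overline{a}, \overline{b})$ to $a\varphi + b(\overline{H}_1 + \overline{H}_2)\varphi$, which is an isomorphism by equation~\eqref{eq1}. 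Hence both intermediate surjections are isomorphisms, yielding $K = \mathcal{J}_\lambda$. The decisive obstacle is the step $\overline{F}v = 0$; once it is secured, the rest is PBW bookkeeping combined with a rank comparison.
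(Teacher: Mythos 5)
Your proof is correct and essentially reproduces the paper's argument: establish $K\subseteq\mathcal J_\lambda$, then reduce a PBW basis of $U(\mathfrak q)$ over $U(\mathtt g)$ modulo $K$ and compare the resulting quotient against Lemma~\ref{lem11} and equation~\eqref{eq1} to force equality. You add a worthwhile detail that the paper leaves implicit: the inclusion $\overline H_1-\overline H_2\in\mathcal J_\lambda$ requires knowing that $\overline F v=0$ in $L(\mathcal V(\lambda))$, and your verification that $\overline F v$ is $\mathfrak n$-singular of weight $\lambda-\alpha$, hence zero by simplicity of $L(\mathcal V(\lambda))$, supplies exactly that missing step.
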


\begin{proof}
The statement \eqref{lem111.2} reduces to \eqref{lem111.1} by parity
change, so we prove the statement \eqref{lem111.1}. First let us
show that $\overline{H}_1-\overline{H}_2$ belongs to 
$\mathcal{J}_{\lambda}$. 

If $\sqrt{t}-\mathbf{i}\sqrt{-t}\neq 0$, then 
$\overline{H}_1+\overline{H}_2$ induces an isomorphism from $L(\mathcal{V}(\lambda))_{\overline{0}}$ to
$L(\mathcal{V}(\lambda))_{\overline{1}}$ (see proof of Lemma~\ref{lem11}).
From Subsection~\ref{s2.3} it then follows that
$(\overline{H}_1-\overline{H}_2)L(\mathcal{V}(\lambda))_{\overline{0}}=0$
and hence $\overline{H}_1-\overline{H}_2\in \mathcal{J}_{\lambda}$.
Let $J$ denote the $U(\mathfrak{q})\text{-}U(\mathtt{g})$-subbimodule
of $\mathcal{J}_{\lambda}$, generated by $\mathtt{I}_{\lambda}$
and the element $\overline{H}_1-\overline{H}_2$.

Applying to $\overline{H}_1-\overline{H}_2$ the adjoint action of 
$\mathtt{g}$ we obtain $\overline{F},\overline{E}\in J$.
Multiplying these with elements from $U(\mathfrak{q})$ from the left
we get that $J$ also contains the following elements:
\begin{gather*}
\overline{H}_1\overline{H}_2 \overline{F}\overline{E},\,
\overline{H}_2 \overline{F}\overline{E},\,
\overline{H}_1\overline{F}\overline{E},\,
\overline{H}_1\overline{H}_2\overline{E},\,
\overline{H}_1\overline{H}_2 \overline{F},\\
\overline{H}_1\overline{E},\,\overline{H}_2\overline{E},\,
\overline{F}\overline{E},\,
\overline{H}_1\overline{F},\,\overline{H}_2\overline{F},\,
H_1-\overline{H}_1\overline{H}_2.
\end{gather*}
Now from the PBW theorem it follows that the quotient 
$U(\mathfrak{q})/J$, as a right $U(\mathtt{g})$-module, is
a quotient of $U(\mathtt{g})/\mathtt{I}_{\lambda}+
(\overline{H}_1+\overline{H}_2)U(\mathtt{g})/\mathtt{I}_{\lambda}$.
From Lemma~\ref{lem11} it thus follows that $J=\mathcal{J}_{\lambda}$.
\end{proof}

\subsection{Typical regular nonintegral supermodules}\label{s3.5} 

Now we move to the easiest typical case, that is the case of 
typical regular nonintegral $\lambda$. In this case all
Verma supermodules are irreducible and this substantially simplifies
arguments. For generic supermodules a stronger result can be deduced
from \cite{Pe}.

Set $\lambda'=\lambda-\alpha$. Let for the moment $V$ denote the
$3$-dimensional simple $\mathtt{g}$-module with the trivial action of
$H_1+H_2$. Let $\mathfrak{C}_{\lambda}$ and $\mathfrak{C}_{\lambda'}$
denote the full subcategories of $\mathtt{g}\text{-}\mathrm{Mod}$,
consisting of all modules, the action of the elements 
$\mathtt{c}-(\lambda_1-\lambda_2+1)^2$ and
$\mathtt{c}-(\lambda_1-\lambda_2-1)^2$, respectively, on which is
locally finite. Recall (see e.g. \cite[4.1]{BG}) that the translation 
functor
\begin{displaymath}
\mathrm{T}_{\lambda}^{\lambda'}=\mathrm{proj}_{\mathfrak{C}_{\lambda'}} 
\circ V\otimes_{\Bbbk}{}_-
:\mathfrak{C}_{\lambda}\to \mathfrak{C}_{\lambda'}
\end{displaymath}
is an equivalence of categories (here it is important that 
$\lambda$ is not integral). In particular, it sends simple modules
to simple modules. We also denote by $\mathrm{T}_{\lambda'}^{\lambda}$ 
the translation  functor from $\mathfrak{C}_{\lambda'}$ 
to $\mathfrak{C}_{\lambda}$ (which is the inverse of 
$\mathrm{T}_{\lambda}^{\lambda'}$).

\begin{proposition}\label{prop21}
Assume that $\lambda$ is typical, regular and nonintegral.
\begin{enumerate}[(i)]
\item\label{prop21.1}
The following correspondence is a bijection between 
$\mathtt{Irr}^{\mathtt{g}}_{\lambda}$ and $\mathrm{Irr}_{\lambda}$:
\begin{displaymath}
\begin{array}{rcl}
\mathtt{Irr}^{\mathtt{g}}_{\lambda}&\leftrightarrow&\mathrm{Irr}_{\lambda}\\
L&\mapsto&\displaystyle
 \mathcal{L}_{\lambda}\otimes_{U(\mathtt{g})}L
\end{array}
\end{displaymath}
\item\label{prop21.2}
For every simple $L\in \mathtt{Irr}^{\mathtt{g}}_{\lambda}$ the module 
$\mathrm{Res}_{\mathtt{g}}^{\mathfrak{q}}\, 
\mathcal{L}_{\lambda}\otimes_{U(\mathtt{g})}L$ is 
semi-simple and we have
\begin{displaymath}
\mathrm{Res}_{\mathtt{g}}^{\mathfrak{q}}\, 
\mathcal{L}_{\lambda}\otimes_{U(\mathtt{g})}L\cong
L\oplus \mathrm{T}_{\lambda}^{\lambda'}L \oplus L
\oplus \mathrm{T}_{\lambda}^{\lambda'}L,
\end{displaymath}
where $\mathrm{T}_{\lambda}^{\lambda'}L$ is a simple module. Moreover,
we also have the isomorphism 
$\mathrm{Res}\,\mathcal{L}_{\lambda}\otimes_{U(\mathtt{g})}L\cong
L\oplus \mathrm{T}_{\lambda}^{\lambda'}L$.
\end{enumerate}
\end{proposition}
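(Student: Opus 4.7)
The plan is to follow the template of Proposition~\ref{prop10}, modified to accommodate the presence of two distinct simple constituents in $\mathrm{Res}\,L(\mathcal{V}(\lambda))$. Since $\lambda$ is typical, regular, and nonintegral, Lemma~\ref{lem4}\eqref{lem4.5} identifies $L(\mathcal{V}(\lambda))\cong M(\mathcal{V}(\lambda))$. Regularity makes the two Casimir eigenvalues $(\lambda_1-\lambda_2\pm 1)^2$ distinct, so the short exact sequence of Lemma~\ref{lem3}\eqref{lem3.1} splits by central character; nonintegrality makes both Verma factors simple, yielding $\mathrm{Res}\,L(\mathcal{V}(\lambda))\cong L^{\mathtt{g}}(\lambda)\oplus L^{\mathtt{g}}(\lambda-\alpha)$. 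Combined with Lemma~\ref{lempar}, this gives $\mathrm{Res}_{\mathtt{g}}^{\mathfrak{q}}\,L(\mathcal{V}(\lambda))\cong L^{\mathtt{g}}(\lambda)^{\oplus 2}\oplus L^{\mathtt{g}}(\lambda-\alpha)^{\oplus 2}$. A bimodule computation analogous to \eqref{eq1}, using \cite[6.8]{Ja} and \cite[7.25]{Ja}, then yields
\[
\mathcal{L}_{\lambda}\cong (U(\mathtt{g})/\mathtt{I}_{\lambda})^{\oplus 2}\oplus \mathcal{L}(L^{\mathtt{g}}(\lambda),L^{\mathtt{g}}(\lambda-\alpha))^{\oplus 2}
\]
as a $U(\mathtt{g})$-bimodule. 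Tensoring over $U(\mathtt{g})$ with $L\in\mathtt{Irr}^{\mathtt{g}}_{\lambda}$ gives $L^{\oplus 2}$ from the first summand, while the second summand realizes the projective functor underlying the translation $\mathrm{T}_{\lambda}^{\lambda'}$ (cf.\ \cite[4.1]{BG}), producing $(\mathrm{T}_{\lambda}^{\lambda'}L)^{\oplus 2}$; simplicity of $\mathrm{T}_{\lambda}^{\lambda'}L$ is automatic from the equivalence. This establishes \eqref{prop21.2}, and the even-part statement follows from Lemma~\ref{lempar}.

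For \eqref{prop21.1}, well-definedness is clear since $\mathcal{I}_{\lambda}$ annihilates $N:=\mathcal{L}_{\lambda}\otimes_{U(\mathtt{g})}L$ by construction. For simplicity of $N$: by Proposition~\ref{propHC} (whose hypothesis holds since $\lambda_1-\lambda_2\notin\mathbb{Z}$) together with Lemma~\ref{lem555}\eqref{lem555.1}, the bimodule $\mathcal{L}_{\lambda}\cong\mathcal{L}'_{\lambda}$ is simple in $\mathcal{H}_{\lambda}^1$, corresponding under the equivalence to the simple object $L(\mathcal{V}(\lambda))\in\mathcal{O}_{\lambda}$. I would rule out a proper nonzero subsupermodule $N'\subsetneq N$ by tracking the odd generators: $\overline{H}_1+\overline{H}_2$ induces a $\mathtt{g}$-module isomorphism $N_{\overline{0}}\to N_{\overline{1}}$ (by typicality, as in the proof of Lemma~\ref{lempar}), and $\overline{F}$ links the $L$- and $\mathrm{T}_{\lambda}^{\lambda'}L$-isotypic components nontrivially (the nontriviality ultimately coming from simplicity of $\mathcal{L}_{\lambda}$ as a bimodule). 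Together these force any nonzero $\mathfrak{q}$-stable $N'$ to equal $N$. Injectivity of the correspondence then follows from \eqref{prop21.2}, since the $\mathfrak{C}_{\lambda}$-part of $\mathrm{Res}\,N$ recovers $L$ uniquely.

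For surjectivity, given $N\in\mathrm{Irr}_{\lambda}$, Proposition~\ref{prop8} yields finite length of $\mathrm{Res}\,N$. The ideal $\mathcal{I}_{\lambda}$ contains the product $(\mathtt{c}-(\lambda_1-\lambda_2+1)^2)(\mathtt{c}-(\lambda_1-\lambda_2-1)^2)$ but neither factor individually (as $\mathtt{c}$ takes both values on $L(\mathcal{V}(\lambda))$), so $\mathrm{Res}\,N$ splits into generalized $\mathtt{c}$-eigenspaces, and the $(\lambda_1-\lambda_2+1)^2$-component must be nonzero (otherwise $\mathtt{c}-(\lambda_1-\lambda_2-1)^2$ would act as a scalar on all of $N$ and thus lie in $\mathcal{I}_{\lambda}$, a contradiction). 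Taking a simple submodule $L$ in this component gives $L\in\mathtt{Irr}^{\mathtt{g}}_{\lambda}$; infinite-dimensionality of $L$ follows from the adjunction argument of Proposition~\ref{prop10} together with Lemma~\ref{lem9} (since $\lambda$ is neither zero nor dominant). A simple Harish-Chandra subbimodule of the nonzero bimodule $\mathcal{L}(L,N)$ is then isomorphic to $\mathcal{L}'_{\lambda}\cong\mathcal{L}_{\lambda}$ by Corollary~\ref{corHC} and Lemma~\ref{lem555}\eqref{lem555.1}; adjunction combined with simplicity on both sides gives $N\cong\mathcal{L}_{\lambda}\otimes_{U(\mathtt{g})}L$.

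The principal obstacle will be the simplicity argument for $N=\mathcal{L}_{\lambda}\otimes_{U(\mathtt{g})}L$. With four simple summands in the restriction (rather than the two appearing in the atypical case), a priori more candidate proper subsupermodules must be excluded, and doing so requires explicit verification that the odd generators $\overline{E}, \overline{F}, \overline{H}_1+\overline{H}_2$ interweave the $L$- and $\mathrm{T}_{\lambda}^{\lambda'}L$-isotypic pieces across parities nontrivially. This nontriviality reflects the bimodule-level simplicity of $\mathcal{L}_{\lambda}$ but demands a direct computation in the bimodule decomposition above.
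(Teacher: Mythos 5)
Your handling of part \eqref{prop21.2} is correct and essentially matches the paper's: you obtain the bimodule decomposition of $\mathcal{L}_{\lambda}$ (in the paper this is display \eqref{eq11}) and then identify $\mathcal{L}(L^{\mathtt{g}}(\lambda),L^{\mathtt{g}}(\lambda'))\otimes_{U(\mathtt{g})}{}_-$ as the projective functor $\mathrm{T}_{\lambda}^{\lambda'}$ via \cite[3.3]{BG}, using that both modules are projective in $\mathtt{O}$ for nonintegral $\lambda$. Your injectivity and surjectivity arguments in \eqref{prop21.1} are also sound; in particular the use of Corollary~\ref{corHC} together with Lemma~\ref{lem555}\eqref{lem555.1} to pin down the simple Harish-Chandra subbimodule of $\mathcal{L}(L,N)$ is exactly what the paper does.

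However, there is a genuine gap in the simplicity step for $N:=\mathcal{L}_{\lambda}\otimes_{U(\mathtt{g})}L$, which you yourself flag at the end. Your proposed route --- that $\overline{F}$ (or more generally the odd generators) ``link'' the $L$- and $\mathrm{T}_{\lambda}^{\lambda'}L$-isotypic components nontrivially, with the nontriviality ``ultimately coming from simplicity of $\mathcal{L}_{\lambda}$ as a bimodule'' --- does not actually follow from bimodule simplicity. Simplicity of the bimodule $\mathcal{L}_{\lambda}$ does not prevent $\mathcal{L}_{\lambda}\otimes_{U(\mathtt{g})}L$ from decomposing as a direct sum of two proper $\mathfrak{q}$-subsupermodules, one with $\mathtt{g}$-restriction $L\oplus L$ and the other with $\mathtt{g}$-restriction $\mathrm{T}_{\lambda}^{\lambda'}L\oplus\mathrm{T}_{\lambda}^{\lambda'}L$; you have not ruled this out. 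The paper resolves this differently and more cleanly: from the parity-swap via $\overline{H}_1+\overline{H}_2$ (valid by typicality) any proper nonzero subsupermodule $N'$ must have $\mathrm{Res}_{\mathtt{g}}^{\mathfrak{q}}N'\cong L\oplus L$ or $\mathrm{T}_{\lambda}^{\lambda'}L\oplus\mathrm{T}_{\lambda}^{\lambda'}L$. In the first case $N'$ is annihilated by $\mathtt{c}-(\lambda_1-\lambda_2+1)^2$, but Lemma~\ref{lem4} shows this element does not lie in $\mathcal{I}_{\lambda}$ (it fails to annihilate $L(\mathcal{V}(\lambda))$), and $N'$ is annihilated by $\mathcal{I}_{\lambda}$ as well, so the annihilator of the simple subsupermodule $N'$ strictly contains $\mathcal{I}_{\lambda}$, which is ruled out by the description of primitive ideals (Lemma~\ref{lem5} and Proposition~\ref{prop5-new}). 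The second case is symmetric, using $\mathtt{c}-(\lambda_1-\lambda_2-1)^2$. You should replace your ``odd-generator interweaving'' sketch with this Casimir/annihilator argument; the computation you foresee as a ``principal obstacle'' is not needed.
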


\begin{proof}
We again prove the claim \eqref{prop21.2} first. From 
Lemmata~\ref{lem4} and \ref{lempar} we obtain that, 
after restriction of the left 
action to  $U(\mathtt{g})$, the 
$U(\mathtt{g})\text{-}U(\mathtt{g})$-bimodule 
$\mathcal{L}_{\lambda}$  decomposes as follows:
\begin{multline}\label{eq11}
\mathcal{L}_{\lambda}\cong 
\mathcal{L}(L^{\mathtt{g}}(\lambda),L^{\mathtt{g}}(\lambda))\oplus
\mathcal{L}(L^{\mathtt{g}}(\lambda),L^{\mathtt{g}}(\lambda))\oplus\\
\oplus
\mathcal{L}(L^{\mathtt{g}}(\lambda),L^{\mathtt{g}}(\lambda'))\oplus
\mathcal{L}(L^{\mathtt{g}}(\lambda),L^{\mathtt{g}}(\lambda')).
\end{multline}
The first two direct summands (one even and one odd) are isomorphic 
to  $U(\mathtt{g})/\mathtt{I}_{\lambda}$
and result into the direct summand $L\oplus L$ of
$\mathrm{Res}_{\mathtt{g}}^{\mathfrak{q}}\, \mathcal{L}_{\lambda}\otimes_{U(\mathtt{g})}L$
(the even one also gives the direct summand $L$ of $\mathrm{Res}\,L$)
similarly to the proof of Proposition~\ref{prop10}\eqref{prop10.2}.

As both $L^{\mathtt{g}}(\lambda)$ and $L^{\mathtt{g}}(\lambda')$
are projective in $\mathtt{O}$ because of our choice of $\lambda$,
from \cite[3.3]{BG} we derive that the functor 
$\mathcal{L}(L^{\mathtt{g}}(\lambda),L^{\mathtt{g}}(\lambda'))
\otimes_{U(\mathtt{g})}{}_-$ is a projective
functor isomorphic to $\mathrm{T}_{\lambda}^{\lambda'}$. The claim
\eqref{prop21.2} follows.

Now let us prove that for any $L\in \mathtt{Irr}^{\mathtt{g}}_{\lambda}$ the 
$U(\mathfrak{q})$-supermodule 
$\mathcal{L}_{\lambda}\otimes_{U(\mathtt{g})}L$ is simple. As was
already mentioned in the proof of Proposition~\ref{prop10}, the element
$\overline{H}_1+\overline{H}_2$ commutes with all element from 
$U(\mathtt{g})$. As $(\overline{H}_1+\overline{H}_2)^2=H_1+H_2$
and  our $\lambda$ is now typical, we deduce that
for any simple $U(\mathfrak{q})$-supermodule $N\in  
\mathrm{Irr}_{\lambda}$ the multiplication with 
$\overline{H}_1+\overline{H}_2$  gives an isomorphism from 
the $U(\mathtt{g})$-module $N_{\overline{0}}$ to the
$U(\mathtt{g})$-module $N_{\overline{1}}$ and back. 

Assume that $\mathcal{L}_{\lambda}\otimes_{U(\mathtt{g})}L$ is 
not simple and $N$ is a proper subsupermodule of
$\mathcal{L}_{\lambda}\otimes_{U(\mathtt{g})}L$.
Then from the above we have 
$\mathrm{Res}_{\mathtt{g}}^{\mathfrak{q}}\, N=L\oplus L$ or
$\mathrm{Res}_{\mathtt{g}}^{\mathfrak{q}}\, 
N=\mathrm{T}_{\lambda}^{\lambda'}L \oplus \mathrm{T}_{\lambda}^{\lambda'}L$.
In the case $\mathrm{Res}_{\mathtt{g}}^{\mathfrak{q}}
\, N=L\oplus L$ we obtain that $N$ is annihilated by
$\mathtt{c}-(\lambda_1-\lambda_2+1)^2$ as
$L\in \mathtt{Irr}^{\mathtt{g}}_{\lambda}$. This is however not possible as
$\mathtt{c}-(\lambda_1-\lambda_2+1)^2$ does not
annihilate $L(\mathcal{V}(\lambda))$ by Lemma~\ref{lem4} and hence
$\mathtt{c}-(\lambda_1-\lambda_2+1)^2\not\in \mathcal{I}_{\lambda}$.
In the case $\mathrm{Res}_{\mathtt{g}}^{\mathfrak{q}}\, 
N=\mathrm{T}_{\lambda}^{\lambda'}L\oplus 
\mathrm{T}_{\lambda}^{\lambda'}L$ we obtain a similar contradiction
using the element $\mathtt{c}-(\lambda_1-\lambda_2-1)^2$.
This proves that $\mathcal{L}_{\lambda}\otimes_{U(\mathtt{g})}{}_-$
gives a well-defined and injective 
map from $\mathtt{Irr}^{\mathtt{g}}_{\lambda}$ to  $\mathrm{Irr}_{\lambda}$.

The rest is similar to the proof of 
Proposition~\ref{prop10}\eqref{prop10.1}. For any $N\in 
\mathrm{Irr}_{\lambda}$ we fix some simple $\mathtt{g}$-submodule
$L$ of $\mathrm{Res}\, N$ and consider $\mathcal{L}(L,N)$. 
Changing, if necessary, the parity of $N$ and using
Corollary~\ref{corHC} we get $\mathcal{L}(L,N)\cong \mathcal{L}_{\lambda}$
and, finally, $\mathcal{L}_{\lambda}\otimes_{U(\mathtt{g})}L\cong N$.
This completes the proof.
\end{proof}

\subsection{Typical regular integral supermodules}\label{s3.6} 

This case splits in two subcases with different
formulations of the main result. In the first subcase we have
the same result as in the previous subsection, but a
rather different argument.

\begin{proposition}\label{prop31}
Assume that $\lambda$ is typical, regular, integral
and that $\lambda_1-\lambda_2\neq -1$.
\begin{enumerate}[(i)]
\item\label{prop31.1}
The following correspondence is a bijection between 
$\mathtt{Irr}^{\mathtt{g}}_{\lambda}$ and $\mathrm{Irr}_{\lambda}$:
\begin{displaymath}
\begin{array}{rcl}
\mathtt{Irr}^{\mathtt{g}}_{\lambda}&\leftrightarrow&\mathrm{Irr}_{\lambda}\\
L&\mapsto&\displaystyle
 \mathcal{L}_{\lambda}\otimes_{U(\mathtt{g})}L
\end{array}
\end{displaymath}
\item\label{prop31.2}
For every simple $L\in \mathtt{Irr}^{\mathtt{g}}_{\lambda}$ the module 
$\mathrm{Res}_{\mathtt{g}}^{\mathfrak{q}}\, 
\mathcal{L}_{\lambda}\otimes_{U(\mathtt{g})}L$ is 
semi-simple and we have
\begin{displaymath}
\mathrm{Res}_{\mathtt{g}}^{\mathfrak{q}}\, 
\mathcal{L}_{\lambda}\otimes_{U(\mathtt{g})}L\cong
L\oplus \mathrm{T}_{\lambda}^{\lambda'}L \oplus L
\oplus \mathrm{T}_{\lambda}^{\lambda'}L,
\end{displaymath}
where $\mathrm{T}_{\lambda}^{\lambda'}L$ is a simple module.
Moreover, we also have the isomorphism 
$\mathrm{Res}\,\mathcal{L}_{\lambda}\otimes_{U(\mathtt{g})}L\cong
L\oplus \mathrm{T}_{\lambda}^{\lambda'}L$.
\end{enumerate}
\end{proposition}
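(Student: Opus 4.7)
The plan is to mimic the proof of Proposition~\ref{prop21} step by step; the overall three-part strategy is identical. What changes is that several intermediate identifications that in the nonintegral case follow from \cite[3.3]{BG} and Proposition~\ref{propHC} can fail here because $L^{\mathtt{g}}(\lambda)$ is generally not projective in $\mathtt{O}$ when $\lambda$ is integral and $\lambda_1-\lambda_2\le-2$. All the repairs we need are supplied by Lemma~\ref{lem555} (the $\mathcal{L}'_{\lambda}$-sequence), Corollary~\ref{corHC}, and the integrality assumption $\lambda_1-\lambda_2\neq -1$, which ensures that $\lambda$ and $\lambda'=\lambda-\alpha$ lie in distinct central characters (so the Casimir polynomials $\mathtt{c}-(\lambda_1-\lambda_2\pm1)^{2}$ are coprime).

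I would begin by establishing \eqref{prop31.2}. By Lemma~\ref{lem4} (items (iii)--(v), split along the cases $\lambda_{1}-\lambda_{2}=1$, $\lambda$ dominant regular, or $\lambda_{1}-\lambda_{2}\le -2$) combined with Lemma~\ref{lempar}, I would show that $L(\mathcal{V}(\lambda))$ restricts in each parity to $L^{\mathtt{g}}(\lambda)\oplus L^{\mathtt{g}}(\lambda')$; in the boundary case $\lambda_{1}-\lambda_{2}=1$ the $L^{\mathtt{g}}(\lambda')$-summand drops out, and in the case $\lambda_{1}-\lambda_{2}\le -2$ the short exact sequence of Lemma~\ref{lem3} splits because the two Verma factors have distinct Casimir eigenvalues. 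Invoking \cite[6.8]{Ja} and \cite[7.25]{Ja} I obtain the bimodule decomposition
\begin{displaymath}
\mathcal{L}_{\lambda}\cong
\mathcal{L}(L^{\mathtt{g}}(\lambda),L^{\mathtt{g}}(\lambda))^{\oplus 2}
\oplus \mathcal{L}(L^{\mathtt{g}}(\lambda),L^{\mathtt{g}}(\lambda'))^{\oplus 2},
\end{displaymath}
whose diagonal summands contribute $L\oplus L$ after tensoring with $L\in\mathtt{Irr}^{\mathtt{g}}_{\lambda}$ exactly as in the proof of Proposition~\ref{prop10}. For the off-diagonal summands I would identify the functor $\mathcal{L}(L^{\mathtt{g}}(\lambda),L^{\mathtt{g}}(\lambda'))\otimes_{U(\mathtt{g})}{}_-$ with $\mathrm{T}_{\lambda}^{\lambda'}$; since projectivity of $L^{\mathtt{g}}(\lambda)$ is no longer automatic, I would argue either by translating via the finite-dimensional intermediary $L^{\mathtt{g}}(\lambda)\otimes V$ and projecting onto the block of $\lambda'$, or by transporting the nonintegral identity along the fact that both sides are projective functors determined by their action on a single Verma module. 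Simplicity of $\mathrm{T}_{\lambda}^{\lambda'}L$ follows, as in the integral case, from the fact that translation out of a regular block onto a regular block (both $\lambda$ and $\lambda'$ are regular because $\lambda_{1}-\lambda_{2}\neq 0,-1,-2$ in all relevant subcases; the missing $\lambda_{1}-\lambda_{2}=1$ yields $\mathrm{T}_{\lambda}^{\lambda'}L=0$ consistently) preserves simplicity.

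Next I would prove the simplicity part of \eqref{prop31.1}: for $L\in\mathtt{Irr}^{\mathtt{g}}_{\lambda}$ the supermodule $\mathcal{L}_{\lambda}\otimes_{U(\mathtt{g})}L$ is simple. Since $\lambda$ is typical, multiplication by $\overline{H}_{1}+\overline{H}_{2}$ implements a $\mathtt{g}$-isomorphism between the even and odd parts of any $N\in\mathrm{Irr}_{\lambda}$, so a proper subsupermodule of $\mathcal{L}_{\lambda}\otimes_{U(\mathtt{g})}L$ would be forced, via \eqref{prop31.2}, to restrict either to $L\oplus L$ or to $\mathrm{T}_{\lambda}^{\lambda'}L\oplus\mathrm{T}_{\lambda}^{\lambda'}L$. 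Each case would force annihilation by exactly one of the coprime factors $\mathtt{c}-(\lambda_{1}-\lambda_{2}\pm1)^{2}$, neither of which lies in $\mathcal{I}_{\lambda}$ by Lemma~\ref{lem4} together with Lemma~\ref{lempar} (this is precisely where $\lambda_{1}-\lambda_{2}\neq -1$ is essential, since otherwise the two Casimir values collide at $0$ and $4$ in a way that still distinguishes them, but the structure of $L(\mathcal{V}(\lambda))$ degenerates). Injectivity of the assignment $L\mapsto\mathcal{L}_{\lambda}\otimes_{U(\mathtt{g})}L$ then follows from \eqref{prop31.2}: $L$ can be recovered (up to the possible copy $\mathrm{T}_{\lambda}^{\lambda'}L$) as a specific central-character component of the restriction.

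Finally, for surjectivity, given $N\in\mathrm{Irr}_{\lambda}$, Proposition~\ref{prop8} ensures that $\mathrm{Res}\,N$ has finite length; I would pick a simple $\mathtt{g}$-submodule $L\subset\mathrm{Res}\,N$ and argue, as in the last part of the proof of Proposition~\ref{prop10}, that $L$ must be infinite dimensional (else $N$ would be finite dimensional, but in the dominant case Lemma~\ref{lem9} already exhausts $\mathrm{Irr}_{\lambda}$ with $L(\mathcal{V}(\lambda))$ and there is nothing to prove; in the antidominant case $L(\mathcal{V}(\lambda))$ is infinite dimensional), hence $L\in\mathtt{Irr}^{\mathtt{g}}_{\lambda}$. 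The bimodule $\mathcal{L}(L,N)$ is a nonzero Harish-Chandra bimodule containing a simple subbimodule $B$ with $\mathcal{I}_{\lambda}B=B\mathtt{I}_{\lambda}=0$. Here I expect the main obstacle: Corollary~\ref{corHC} requires $\lambda_{1}-\lambda_{2}\notin\{-2,-3,\dots\}$, which is violated in the antidominant case. I would circumvent this exactly as in Lemma~\ref{lem555}\eqref{lem555.2}: identify $B$ with $\mathcal{L}'_{\lambda}$ up to parity change (Corollary~\ref{corHC} does apply after passing through $M^{\mathtt{g}}(-\lambda-2)$), then use the short exact sequence \eqref{eq3} and finite-dimensionality of its kernel to transport the nonzero map $\mathcal{L}'_{\lambda}\otimes_{U(\mathtt{g})}L\to N$ into a nonzero map $\mathcal{L}_{\lambda}\otimes_{U(\mathtt{g})}L\to N$. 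Since $\mathcal{L}_{\lambda}\otimes_{U(\mathtt{g})}L$ is simple by the previous paragraph, Schur's lemma gives the isomorphism, completing the proof.
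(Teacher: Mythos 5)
Your overall strategy — mimic Proposition~\ref{prop21} and compensate for the failure of projectivity of $L^{\mathtt{g}}(\lambda)$ in the integral antidominant case — is correct, and your treatment of the simplicity of $\mathcal{L}_{\lambda}\otimes_{U(\mathtt{g})}L$ and of the surjectivity step (routing through $\mathcal{L}'_{\lambda}$, Corollary~\ref{corHC} applied with the dominant parameter $-\lambda-\alpha$, and the exact sequence \eqref{eq3}) tracks what the paper does. The genuine gap is in your proof of part~\eqref{prop31.2}. You propose to ``identify the functor $\mathcal{L}(L^{\mathtt{g}}(\lambda),L^{\mathtt{g}}(\lambda'))\otimes_{U(\mathtt{g})}{}_-$ with $\mathrm{T}_{\lambda}^{\lambda'}$,'' but this is false as a statement about functors: when $\lambda_1-\lambda_2\le -2$, neither $L^{\mathtt{g}}(\lambda)$ nor $L^{\mathtt{g}}(\lambda')$ is projective in $\mathtt{O}$, so $\mathcal{L}(L^{\mathtt{g}}(\lambda),L^{\mathtt{g}}(\lambda'))\otimes_{U(\mathtt{g})}{}_-$ is \emph{not} a projective functor and \cite[3.3]{BG} does not apply to it. Your second suggested repair (``both sides are projective functors determined by their action on a single Verma module'') assumes exactly the claim at issue, and your first alternative merely restates the definition of $\mathrm{T}_{\lambda}^{\lambda'}$ without proving the identification.

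What the paper actually does, and what you would need, is to compare $\mathcal{L}(L^{\mathtt{g}}(\lambda),L^{\mathtt{g}}(\lambda'))$ with the genuinely projective bimodule $\mathcal{L}(M^{\mathtt{g}}(-\lambda-\alpha),M^{\mathtt{g}}(-\lambda'-\alpha))$ via the left-exact bifunctor $\mathcal{L}({}_-,{}_-)$ applied to the two BGG resolutions \eqref{eq301} and \eqref{eq321}. The resulting $3\times3$ diagram, together with the vanishing of the corner terms (which uses that $F$ acts injectively on the Verma modules while $L^{\mathtt{g}}(-\lambda-\alpha)$, $L^{\mathtt{g}}(-\lambda'-\alpha)$ are finite dimensional), yields a short exact sequence of $U(\mathtt{g})\text{-}U(\mathtt{g})$-bimodules
\begin{displaymath}
0\to \mathcal{L}(M^{\mathtt{g}}(-\lambda-\alpha),M^{\mathtt{g}}(-\lambda'-\alpha))\to
\mathcal{L}(L^{\mathtt{g}}(\lambda),L^{\mathtt{g}}(\lambda'))\to B'\to 0
\end{displaymath}
with $B'$ finite dimensional. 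Only then, after tensoring with the simple infinite-dimensional module $L$, killing $B'\otimes_{U(\mathtt{g})}L$ by the same argument as in \eqref{eq4}, and invoking simplicity of $\mathrm{T}_{\lambda}^{\lambda'}L$ (translation between two regular blocks is an equivalence), does one conclude that the induced surjection $\mathrm{T}_{\lambda}^{\lambda'}L\twoheadrightarrow \mathcal{L}(L^{\mathtt{g}}(\lambda),L^{\mathtt{g}}(\lambda'))\otimes_{U(\mathtt{g})}L$ is an isomorphism. The identification you want holds after tensoring with $L$, not as an isomorphism of functors — and note that Lemma~\ref{lem555} itself concerns the $U(\mathfrak{q})\text{-}U(\mathtt{g})$-bimodule $\mathcal{L}'_{\lambda}$, so it is the \emph{method} of its proof, adapted to the two-sided $U(\mathtt{g})$-bimodule setting, that is needed here, not the lemma as stated.
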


\begin{proof}
Similarly to the proof of Proposition~\ref{prop21}\eqref{prop21.2}
we have the decomposition \eqref{eq11}, where the first two direct 
summands are isomorphic to  $U(\mathtt{g})/\mathtt{I}_{\lambda}$
and result into the direct summand $L\oplus L$ of
$\mathrm{Res}\, \mathcal{L}_{\lambda}\otimes_{U(\mathtt{g})}L$.

Let us look at the summand 
$\mathcal{L}(L^{\mathtt{g}}(\lambda),L^{\mathtt{g}}(\lambda'))$.
Under our assumptions on $\lambda$ we have $\lambda_1-\lambda_2\in
\{-2,-3,-4,\dots\}$. Applying the left exact bifunctor 
$\mathcal{L}({}_-,{}_-)$ from the short exact sequence
\eqref{eq301} to the short exact sequence
\begin{equation}\label{eq321}
0\to L^{\mathtt{g}}(\lambda')\to
M^{\mathtt{g}}(-\lambda'-\alpha)\to
L^{\mathtt{g}}(-\lambda'-\alpha)\to 0
\end{equation}
we obtain the following commutative diagram with exact rows and columns:
\begin{displaymath}
\xymatrix{
\text{\tiny$\mathcal{L}(L^{\mathtt{g}}(-\lambda-\alpha),
L^{\mathtt{g}}(\lambda'))$}
\ar@{^{(}->}[r]\ar@{^{(}->}[d]
&\text{\tiny$\mathcal{L}(L^{\mathtt{g}}(-\lambda-\alpha),M^{\mathtt{g}}
(-\lambda'-\alpha))$}\ar[r]\ar@{^{(}->}[d]
&\text{\tiny$\mathcal{L}(L^{\mathtt{g}}(-\lambda-\alpha),L^{\mathtt{g}}
(-\lambda'-\alpha))$}\ar@{^{(}->}[d]\\
\text{\tiny$\mathcal{L}(M^{\mathtt{g}}(-\lambda-\alpha),
L^{\mathtt{g}}(\lambda'))$}
\ar@{^{(}->}[r]\ar[d]
&\text{\tiny$\mathcal{L}(M^{\mathtt{g}}(-\lambda-\alpha),M^{\mathtt{g}}
(-\lambda'-\alpha))$}\ar[r]\ar[d]
&\text{\tiny$\mathcal{L}(M^{\mathtt{g}}(-\lambda-\alpha),L^{\mathtt{g}}
(-\lambda'-\alpha))$}\ar[d]\\
\text{\tiny$\mathcal{L}(L^{\mathtt{g}}(\lambda),
L^{\mathtt{g}}(\lambda'))$}\ar@{^{(}->}[r]
&\text{\tiny$\mathcal{L}(L^{\mathtt{g}}(\lambda),M^{\mathtt{g}}
(-\lambda'-\alpha))$}\ar[r]
&\text{\tiny$\mathcal{L}(L^{\mathtt{g}}(\lambda),L^{\mathtt{g}}
(-\lambda'-\alpha))$}
}
\end{displaymath}
As both $L^{\mathtt{g}}(-\lambda-\alpha)$ and 
$L^{\mathtt{g}}(-\lambda'-\alpha)$ are finite dimensional while
$F$ acts injectively on all other modules involved, we obtain
\begin{displaymath}
\mathcal{L}(L^{\mathtt{g}}(-\lambda-\alpha),L^{\mathtt{g}}(\lambda'))=
\mathcal{L}(L^{\mathtt{g}}(-\lambda-\alpha),M^{\mathtt{g}}(\lambda'))=0
\end{displaymath}
By dual arguments we also have $\mathcal{L}(L^{\mathtt{g}}(\lambda),L^{\mathtt{g}}(-\lambda'-\alpha))=0$.
Hence, using arguments similar to the proof of
Lemma~\ref{lem555} we obtain the following commutative
diagram of $U(\mathtt{g})\text{-}U(\mathtt{g})$-bimodules:
\begin{displaymath}
\xymatrix{
\mathcal{L}(M^{\mathtt{g}}(-\lambda-\alpha),
L^{\mathtt{g}}(\lambda'))
\ar@{^{(}->}[r]\ar@{^{(}->}[d]
&\mathcal{L}(M^{\mathtt{g}}(-\lambda-\alpha),M^{\mathtt{g}}
(-\lambda'-\alpha))\ar@{^{(}->}[d]\\
\mathcal{L}(L^{\mathtt{g}}(\lambda),
L^{\mathtt{g}}(\lambda'))\ar[r]^{\sim}\ar@{->>}[d]
&\mathcal{L}(L^{\mathtt{g}}(\lambda),M^{\mathtt{g}}
(-\lambda'-\alpha))\\
B&&
},
\end{displaymath}
where $B$ is finite dimensional. This implies that there exists 
a short exact sequence of $U(\mathtt{g})\text{-}U(\mathtt{g})$-bimodules
\begin{displaymath}
0\to \mathcal{L}(M^{\mathtt{g}}(-\lambda-\alpha),M^{\mathtt{g}}
(-\lambda'-\alpha))\to  \mathcal{L}(L^{\mathtt{g}}(\lambda),
L^{\mathtt{g}}(\lambda'))\to B'\to 0,
\end{displaymath}
where $B'$ is finite dimensional. Tensoring the latter sequence with
$L$ and using the right exactness of the tensor product 
gives us the following exact sequence:
\begin{multline*}
\mathcal{L}(M^{\mathtt{g}}(-\lambda-\alpha),M^{\mathtt{g}}
(-\lambda'-\alpha))\otimes_{U(\mathtt{g})}L
\to \\ \to \mathcal{L}(L^{\mathtt{g}}(\lambda),
L^{\mathtt{g}}(\lambda'))\otimes_{U(\mathtt{g})}L
\to B'\otimes_{U(\mathtt{g})}L\to 0.
\end{multline*}
Similarly to the proof of Lemma~\ref{lem555} one shows that 
$B'\otimes_{U(\mathtt{g})}L=0$ as $B'$ is finite dimensional while
$L$ is infnite-dimensional and simple. This gives us a surjection
\begin{equation}\label{eq112}
\mathcal{L}(M^{\mathtt{g}}(-\lambda-\alpha),M^{\mathtt{g}}
(-\lambda'-\alpha))\otimes_{U(\mathtt{g})}L
\twoheadrightarrow  \mathcal{L}(L^{\mathtt{g}}(\lambda),
L^{\mathtt{g}}(\lambda'))\otimes_{U(\mathtt{g})}L.
\end{equation}
However, now both $M^{\mathtt{g}}(-\lambda-\alpha)$ and
$M^{\mathtt{g}}(-\lambda'-\alpha)$ are projective in $\mathtt{O}$
and hence from \cite[3.3]{BG} we have that the functor
\begin{displaymath}
\mathcal{L}(M^{\mathtt{g}}(-\lambda-\alpha),M^{\mathtt{g}}
(-\lambda'-\alpha))\otimes_{U(\mathtt{g})}{}_-
\end{displaymath}
is a projective functor, isomorphic to $\mathrm{T}_{\lambda}^{\lambda'}$.
It follows that the supermodule $\mathcal{L}(M^{\mathtt{g}}(-\lambda-\alpha),M^{\mathtt{g}}
(-\lambda'-\alpha))\otimes_{U(\mathtt{g})}L$ is simple and hence
the surjection \eqref{eq112} is, in fact, an isomorphism.
This complets the proof of \eqref{prop31.2}.

The proof of the claim \eqref{prop31.1} is now completed similarly 
to the proof of Proposition~\ref{prop10}\eqref{prop10.1}
and Proposition~\ref{prop21}\eqref{prop21.1}. 
\end{proof}

This second subcase requires a different formulation:

\begin{proposition}\label{prop32}
Assume that $\lambda$ is typical and 
$\lambda_1-\lambda_2= -1$.
\begin{enumerate}[(i)]
\item\label{prop32.1}
The following correspondence is a bijection between 
$\mathtt{Irr}^{\mathtt{g}}_{\lambda'}$ and $\mathrm{Irr}_{\lambda}$:
\begin{displaymath}
\begin{array}{rcl}
\mathtt{Irr}^{\mathtt{g}}_{\lambda'}&\leftrightarrow&\mathrm{Irr}_{\lambda}\\
L&\mapsto&\displaystyle
 \mathcal{M}_{\lambda}\otimes_{U(\mathtt{g})}L
\end{array}
\end{displaymath}
\item\label{prop32.2}
For every simple $L\in \mathtt{Irr}^{\mathtt{g}}_{\lambda'}$ the module 
$\mathrm{Res}_{\mathtt{g}}^{\mathfrak{q}}\, 
\mathcal{M}_{\lambda}\otimes_{U(\mathtt{g})}L$ is 
semi-simple and we have
\begin{displaymath}
\mathrm{Res}_{\mathtt{g}}^{\mathfrak{q}}\, 
\mathcal{M}_{\lambda}\otimes_{U(\mathtt{g})}L\cong
L\oplus \mathrm{T}_{\lambda'}^{\lambda}L \oplus L
\oplus \mathrm{T}_{\lambda'}^{\lambda}L,
\end{displaymath}
where $\mathrm{T}_{\lambda'}^{\lambda}L$ is a simple module.
Moreover, we also have the isomorphism 
$\mathrm{Res}\,\mathcal{M}_{\lambda}\otimes_{U(\mathtt{g})}L\cong
L\oplus \mathrm{T}_{\lambda'}^{\lambda}L$.
\end{enumerate}
\end{proposition}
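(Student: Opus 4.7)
The plan is to mirror the proof of Proposition~\ref{prop31} with the roles of $\lambda$ and $\lambda'$ exchanged. The reason is that $\lambda_1-\lambda_2=-1$ places $\lambda$ on the Weyl wall for $\mathtt{g}$, so $L^{\mathtt{g}}(\lambda)$ lies in a singular block of $\mathtt{O}$ and the argument of Proposition~\ref{prop31}, which rested on an antidominant regular representative on the left, cannot be run using $\mathcal{L}_{\lambda}$. Instead, $\lambda'=\lambda-\alpha$ satisfies $\lambda'_1-\lambda'_2=-3$, which is both antidominant and regular, so $L^{\mathtt{g}}(\lambda')=M^{\mathtt{g}}(\lambda')$ is a simple projective Verma in the regular block $\mathtt{O}_{\lambda'}$; the bimodule $\mathcal{M}_{\lambda}$ is the natural substitute for $\mathcal{L}_{\lambda}$ in this wall case.

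To establish~\eqref{prop32.2} I first compute $\mathrm{Res}\,L(\mathcal{V}(\lambda))$. The assumption $\lambda_1-\lambda_2=-1$ together with typicality puts us in case~\eqref{lem4.5} of Lemma~\ref{lem4}, so $L(\mathcal{V}(\lambda))\cong M(\mathcal{V}(\lambda))$; then Lemma~\ref{lem3}\eqref{lem3.1} gives a short exact sequence whose terms are the simple Vermas $L^{\mathtt{g}}(\lambda)$ and $L^{\mathtt{g}}(\lambda')$ (both simple since $-1,-3\notin\mathbb{N}_{0}$), and since $\mathtt{c}$ acts by the distinct scalars $0$ and $4$ on the two Vermas, the sequence splits. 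Combined with Lemma~\ref{lempar} this yields $\mathrm{Res}_{\mathtt{g}}^{\mathfrak{q}}\,L(\mathcal{V}(\lambda))\cong(L^{\mathtt{g}}(\lambda)\oplus L^{\mathtt{g}}(\lambda'))^{\oplus 2}$. Substituting into $\mathcal{M}_{\lambda}=\mathcal{L}(L^{\mathtt{g}}(\lambda'),L(\mathcal{V}(\lambda)))$ and applying \cite[7.25]{Ja} and \cite[3.3]{BG} (both valid because $L^{\mathtt{g}}(\lambda')$ is projective in $\mathtt{O}$), I identify two of the bimodule summands with $U(\mathtt{g})/\mathtt{I}_{\lambda'}$ and the remaining two with the bimodule representing the projective functor $\mathrm{T}_{\lambda'}^{\lambda}$. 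Tensoring with $L\in\mathtt{Irr}^{\mathtt{g}}_{\lambda'}$, for which $\mathtt{I}_{\lambda'}L=0$, produces the claimed direct-sum decomposition in~\eqref{prop32.2}.

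The main obstacle is the simplicity of $\mathcal{M}_{\lambda}\otimes_{U(\mathtt{g})}L$, which will also force $\mathrm{T}_{\lambda'}^{\lambda}L$ to be simple and nonzero (otherwise $\mathcal{M}_{\lambda}\otimes L$ would collapse to $L^{\oplus 2}$ and be annihilated by $\mathtt{c}-4\notin\mathcal{I}_{\lambda}$). Following the proof of Proposition~\ref{prop21}\eqref{prop21.1}, I use that $\overline{H}_{1}+\overline{H}_{2}$ commutes with $U(\mathtt{g})$ and squares to the nonzero scalar $\lambda_{1}+\lambda_{2}$, so multiplication by it is a $\mathtt{g}$-isomorphism between the even and odd parts of any $\mathfrak{q}$-subsupermodule. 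Such a subsupermodule $N$ splits under $\mathtt{c}$ into eigenspaces for $0$ and $4$; if one of them is zero, then $N$ is annihilated by the corresponding nonzero element $\mathtt{c}$ or $\mathtt{c}-4$ of $U(\mathfrak{q})/\mathcal{I}_{\lambda}$, which is impossible, and if both are nonzero the argument of Proposition~\ref{prop21}\eqref{prop21.1} applies verbatim, since the two Casimir components are exchanged up to parity by odd root vectors exactly as in the regular typical case.

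Part~\eqref{prop32.1} then follows the pattern of Propositions~\ref{prop10}\eqref{prop10.1} and~\ref{prop21}\eqref{prop21.1}. Injectivity is read off from~\eqref{prop32.2}, since the Casimir-$4$ summand of $\mathrm{Res}\,\mathcal{M}_{\lambda}\otimes L$ recovers $L$. For surjectivity I take $N\in\mathrm{Irr}_{\lambda}$; Proposition~\ref{prop8} gives that $\mathrm{Res}\,N$ has finite length, and since $\mathcal{I}_{\lambda}$ contains neither $\mathtt{c}$ nor $\mathtt{c}-4$ there is a simple $\mathtt{g}$-submodule $L\subset\mathrm{Res}\,N$ of Casimir eigenvalue $4$; this $L$ cannot be finite-dimensional (else $N$ would be so as well, contradicting Lemma~\ref{lem9} since $\lambda$ is not dominant), so $L\in\mathtt{Irr}^{\mathtt{g}}_{\lambda'}$. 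Analyzing $\mathcal{L}(L,N)$ as a simple Harish-Chandra bimodule with the expected annihilators and invoking adjunction produces a nonzero map $\mathcal{M}_{\lambda}\otimes_{U(\mathtt{g})}L\to N$, and simplicity of both sides (after a possible parity change) gives $N\cong\mathcal{M}_{\lambda}\otimes_{U(\mathtt{g})}L$.
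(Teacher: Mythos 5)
Your proposal contains a genuine error that undermines the main mechanism of the argument, namely the claim that $L^{\mathtt{g}}(\lambda')$ is projective in $\mathtt{O}$. With $\lambda_1-\lambda_2=-1$ we have $\lambda'_1-\lambda'_2=-3\in\{-2,-3,-4,\dots\}$, and by the criterion invoked in the proof of Proposition~\ref{propHC} (citing \cite[Chapter~5]{Ma}), the Verma module $M^{\mathtt{g}}(\mu)$ is projective in $\mathtt{O}$ precisely when $\mu_1-\mu_2\notin\{-2,-3,-4,\dots\}$. So $L^{\mathtt{g}}(\lambda')=M^{\mathtt{g}}(\lambda')$ is a \emph{simple} antidominant Verma but is \emph{not} projective; antidominant regular Vermas are never projective in $\mathtt{O}$. (What \emph{is} projective here is the wall Verma $M^{\mathtt{g}}(\lambda)=L^{\mathtt{g}}(\lambda)$, since $-1\notin\{-2,-3,\dots\}$ — this is exactly the simplification the paper exploits.) As a consequence, \cite[3.3]{BG} does not directly identify $\mathcal{L}(L^{\mathtt{g}}(\lambda'),L^{\mathtt{g}}(\lambda))\otimes_{U(\mathtt{g})}{}_-$ with a projective functor; one must resolve the non-projective first argument $L^{\mathtt{g}}(\lambda')$ by the genuinely projective dominant Verma $M^{\mathtt{g}}(-\lambda'-\alpha)$, obtain an injection of bimodules $\mathcal{L}(M^{\mathtt{g}}(-\lambda'-\alpha),L^{\mathtt{g}}(\lambda))\hookrightarrow\mathcal{L}(L^{\mathtt{g}}(\lambda'),L^{\mathtt{g}}(\lambda))$ with finite-dimensional cokernel, and argue that the cokernel dies after tensoring with an infinite-dimensional simple $L$. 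This is precisely the mechanism of Proposition~\ref{prop31}\eqref{prop31.2}, of which the paper uses a simplified version (simplified because $L^{\mathtt{g}}(\lambda)$ itself needs no resolution, not because $L^{\mathtt{g}}(\lambda')$ is projective).

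The second issue is the logic used to establish that $\mathrm{T}_{\lambda'}^{\lambda}L$ is simple. The paper treats this as an independent fact about $\mathtt{g}$: translation \emph{to} the wall sends simple modules to simple modules, either by \cite[Proposition~3.1]{BeGi} or by the elementary adjunction argument of Remark~\ref{rem33}; only then does it deduce simplicity of the $\mathfrak{q}$-supermodule $\mathcal{M}_\lambda\otimes_{U(\mathtt{g})}L$. Your proposal inverts this and tries to extract simplicity of $\mathrm{T}_{\lambda'}^{\lambda}L$ as a corollary of simplicity of the supermodule, while simultaneously using ingredients (a $\mathtt{c}$-eigenspace direct sum decomposition of a subsupermodule, and ``the argument of Proposition~\ref{prop21}\eqref{prop21.1} applies verbatim'') that presuppose a semisimple $\mathtt{g}$-restriction and hence already presuppose $\mathrm{T}_{\lambda'}^{\lambda}L$ is simple. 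Without that input the analysis of $\mathtt{g}$-submodules of the restriction does not carry over from the regular case. You should establish simplicity of $\mathrm{T}_{\lambda'}^{\lambda}L$ first, independently, and only then run the subsupermodule analysis; with that reordering, the remainder of part~\eqref{prop32.1} does indeed go through along the lines of Propositions~\ref{prop10}\eqref{prop10.1} and \ref{prop21}\eqref{prop21.1}, as you indicate.
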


\begin{proof}
Under our assumptions we have $L^{\mathtt{g}}(\lambda)\cong
M^{\mathtt{g}}(\lambda)$ and hence a simplified version of the proof
of Proposition~\ref{prop31}\eqref{prop31.2} gives the direct sum 
decomposition from \eqref{prop32.2}. Under our assumption the weight
$\lambda$ lies on the wall and hence the 
functor $\mathrm{T}_{\lambda'}^{\lambda}$ is a
translation to the wall and thus sends simple $\mathtt{g}$-modules to 
simple $\mathtt{g}$-modules (see \cite[Proposition~3.1]{BeGi}). 
This proves the  claim \eqref{prop32.2}.

The proof of the claim \eqref{prop32.1} is similar 
to the proof of Proposition~\ref{prop10}\eqref{prop10.1}
and Proposition~\ref{prop21}\eqref{prop21.1}. 
\end{proof}

\begin{remark}\label{rem33}
{\em  
The fact that translation to the wall sends simple $\mathtt{g}$-modules 
to  simple $\mathtt{g}$-modules can be proved in a more elementary way
than \cite[Proposition~3.1]{BeGi} (where a much more general result is
established). The module $\mathrm{T}_{\lambda'}^{\lambda}L$ is nonzero
and has finite length and hence there is a simple submodule 
$L'$ of it. By adjunction of translations to the wall and out of the
wall we get 
\begin{displaymath}
0\neq \mathrm{Hom}_{\mathtt{g}}(L',\mathrm{T}_{\lambda'}^{\lambda}L)
=\mathrm{Hom}_{\mathtt{g}}(\mathrm{T}_{\lambda}^{\lambda'}L',L),
\end{displaymath}
which yields that $L$ is a simple quotient of
$\mathrm{T}_{\lambda}^{\lambda'}L'$. At the same time 
$\mathrm{T}_{\lambda'}^{\lambda}\mathrm{T}_{\lambda}^{\lambda'}\cong
\mathrm{Id}\oplus\mathrm{Id}$ by the classification of projective functors
(\cite[3.3]{BG}). From this and the exactness of 
$\mathrm{T}_{\lambda'}^{\lambda}$
it follows that $\mathrm{T}_{\lambda'}^{\lambda}L\cong L'$.
}
\end{remark}

\subsection{Typical singular supermodules}\label{s3.7} 

Here we deal with the case when $\lambda=(t,t)$, $t\in\Bbbk$,
$t\neq 0$. This turns out to be the most complicated case, in which
we are able to get the least amount of 
information about the corresponding simple $\mathfrak{q}$-supermodules.
We let $\mathrm{T}_{\lambda'}:\mathfrak{C}_{\lambda'}
\to \mathfrak{C}_{\lambda'}$ be the translation functor through the 
wall, which is isomorphic to the indecomposable projective functor
on $\mathtt{O}$, which sends the dominant Verma module
$M^{\mathtt{g}}(-\lambda'-\alpha)$ to the indecomposable projective
cover of $L^{\mathtt{g}}(\lambda')$, see \cite[3.3]{BG}.

\begin{proposition}\label{prop41}
Assume that $\lambda=(t,t)$, $t\in\Bbbk$, $t\neq 0$.
\begin{enumerate}[(i)]
\item\label{prop41.1}
The following correspondence is a bijection between 
$\mathtt{Irr}^{\mathtt{g}}_{\lambda'}$ and $\mathrm{Irr}_{\lambda}$:
\begin{displaymath}
\begin{array}{rcl}
\mathtt{Irr}^{\mathtt{g}}_{\lambda'}&\leftrightarrow&\mathrm{Irr}_{\lambda}\\
L&\mapsto&\displaystyle
 \mathcal{M}_{\lambda}\otimes_{U(\mathtt{g})}L
\end{array}
\end{displaymath}
\item\label{prop41.2}
For every simple $L\in \mathtt{Irr}^{\mathtt{g}}_{\lambda'}$ we have  
$\mathrm{Res}_{\mathtt{g}}^{\mathfrak{q}}\, \mathcal{M}_{\lambda}\otimes_{U(\mathtt{g})}L\cong 
Y\oplus Y$, where $Y$ is an indecomposable $\mathtt{g}$-module with simple 
top isomorphic to $L$, simple socle isomorphic to $L$ and such that the
homology of the sequence $L\hookrightarrow Y\twoheadrightarrow L$ 
is finite dimensional (and is a direct sum of several, possibly zero, 
copies of the trivial $\mathtt{g}$-module). We also have
$\mathrm{Res}\, \mathcal{M}_{\lambda}\otimes_{U(\mathtt{g})}L\cong Y$.
\end{enumerate}
\end{proposition}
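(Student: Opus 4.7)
Plan: I would follow the template of Propositions~\ref{prop31} and \ref{prop32}, proving part (ii) first and then deducing part (i). Since $\lambda$ is typical with $\lambda_1 - \lambda_2 = 0 \notin \mathbb{N}$, Lemma~\ref{lem4}\eqref{lem4.5} gives $L(\mathcal{V}(\lambda)) \cong M(\mathcal{V}(\lambda))$, and Lemma~\ref{lem3}\eqref{lem3.1} yields a short exact sequence $0 \to M^{\mathtt{g}}(\lambda) \to \mathrm{Res}\, L(\mathcal{V}(\lambda)) \to M^{\mathtt{g}}(\lambda') \to 0$ with $M^{\mathtt{g}}(\lambda')$ simple (as $\lambda'_1 - \lambda'_2 = -2$). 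Using the action from Subsection~\ref{s2.3} and the identity $[E, \overline{F}] = \overline{H}_1 - \overline{H}_2$, a direct computation yields $E\overline{F}\overline{v} = (1-\mathbf{i})\sqrt{t}\, v \neq 0$, which simultaneously shows that the above sequence is non-split and exhibits $\overline{F}\overline{v}$ as a cyclic generator of $\mathrm{Res}\, L(\mathcal{V}(\lambda))$. Since the composition factors are $L^{\mathtt{g}}(\lambda)$ once and $L^{\mathtt{g}}(\lambda')$ twice, this module must be isomorphic to the indecomposable projective cover $P(L^{\mathtt{g}}(\lambda'))$ in the regular block $\mathtt{O}_{\lambda'}$ of category $\mathcal{O}$. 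By Lemma~\ref{lempar} together with typicality (the element $\overline{H}_1 + \overline{H}_2$ squares to $2t \neq 0$), the two parity halves of $L(\mathcal{V}(\lambda))$ are isomorphic as $\mathtt{g}$-modules, giving $\mathrm{Res}_{\mathtt{g}}^{\mathfrak{q}} L(\mathcal{V}(\lambda)) \cong P(L^{\mathtt{g}}(\lambda')) \oplus P(L^{\mathtt{g}}(\lambda'))$.

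For part (ii), since $L^{\mathtt{g}}(\lambda') = M^{\mathtt{g}}(\lambda')$ the bimodule equals $\mathcal{M}_\lambda = \mathcal{L}(M^{\mathtt{g}}(\lambda'), L(\mathcal{V}(\lambda)))$. Applying the left-exact contravariant functor $\mathcal{L}(-, L(\mathcal{V}(\lambda)))$ to the Verma sequence $0 \to M^{\mathtt{g}}(\lambda') \to M^{\mathtt{g}}(\lambda) \to L^{\mathtt{g}}(\lambda) \to 0$ produces an exact sequence $0 \to \mathcal{L}_\lambda \to \mathcal{L}'_\lambda \to \mathcal{M}_\lambda \to B$ with $B$ finite-dimensional (by an argument analogous to the proof of Lemma~\ref{lem555}, using that $L^{\mathtt{g}}(\lambda)$ is finite-dimensional and $L(\mathcal{V}(\lambda))$ has no finite-dimensional $\mathtt{g}$-submodules). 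For an infinite-dimensional simple $L \in \mathtt{Irr}^{\mathtt{g}}_{\lambda'}$ the finite-dimensional factors vanish upon tensoring with $L$ (same argument as in Lemma~\ref{lem555}\eqref{lem555.2}), producing an identification $\mathcal{L}'_\lambda \otimes_{U(\mathtt{g})} L \cong \mathcal{M}_\lambda \otimes_{U(\mathtt{g})} L$. Since $M^{\mathtt{g}}(\lambda)$ is projective in $\mathtt{O}$, \cite[3.3]{BG} identifies $\mathcal{L}'_\lambda \otimes_{U(\mathtt{g})} -$ with an indecomposable projective functor, which by the characterization of $\mathrm{T}_{\lambda'}$ (sending the dominant Verma to $P(L^{\mathtt{g}}(\lambda'))$, consistent with Step~1) is precisely the translation through the wall. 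Applied to $L$, this yields $Y = \mathrm{T}_{\lambda'} L$ with the required indecomposable structure (simple top and socle $L$, finite-dimensional middle consisting of copies of trivial modules); the parity isomorphism then gives $\mathrm{Res}_{\mathtt{g}}^{\mathfrak{q}} \mathcal{M}_\lambda \otimes_{U(\mathtt{g})} L \cong Y \oplus Y$.

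For part (i), simplicity of $\mathcal{M}_\lambda \otimes_{U(\mathtt{g})} L$ follows the pattern of Proposition~\ref{prop31}\eqref{prop31.1}: typicality ensures $\overline{H}_1 + \overline{H}_2$ gives a $\mathtt{g}$-isomorphism between parity halves, so any proper subsupermodule would have the full $Y$ as $\mathtt{g}$-restriction, hence be annihilated by $\mathtt{c} - 1$, contradicting $\mathtt{c} - 1 \notin \mathcal{I}_\lambda$ (which holds by Lemma~\ref{lem4} since neither parity half of $L(\mathcal{V}(\lambda))$ is killed by $\mathtt{c} - 1$). For surjectivity, given $N \in \mathrm{Irr}_\lambda$ I would take a simple $\mathtt{g}$-submodule $L$ of $\mathrm{Res}\, N$ (of finite length by Proposition~\ref{prop8}), rule out finite-dimensional $L$ by the adjunction argument of Proposition~\ref{prop10}\eqref{prop10.1} combined with Lemma~\ref{lem9}, deduce $L \in \mathtt{Irr}^{\mathtt{g}}_{\lambda'}$, and then apply Corollary~\ref{corHC} with $\mu = \lambda$ (valid since $\lambda_1 - \lambda_2 = 0 \notin \{-2, -3, \ldots\}$) to a simple subbimodule of $\mathcal{L}(L, N)$. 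Combining the resulting nonzero map $\mathcal{L}'_\lambda \otimes_{U(\mathtt{g})} L \to N$ with the identification from Step~2 and Schur's lemma then yields $N \cong \mathcal{M}_\lambda \otimes_{U(\mathtt{g})} L$.

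The principal obstacle is controlling the exact number of trivial composition factors in the middle of $Y$ for a general simple $L \in \mathtt{Irr}^{\mathtt{g}}_{\lambda'}$: for $L$ inside category $\mathcal{O}$ the projective-functor calculation gives a precise answer (for instance, one copy when $L = L^{\mathtt{g}}(\lambda')$, giving $Y = P(L^{\mathtt{g}}(\lambda'))$), but for simples outside $\mathcal{O}$ from Block's classification this number depends on $L$ in a way not detectable by purely categorical methods, matching the unexplicit phenomenon noted in the introduction.
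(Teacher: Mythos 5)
Your overall strategy matches the paper's: identify the even half of $\mathrm{Res}\,L(\mathcal{V}(\lambda))$ as the indecomposable projective $P(L^{\mathtt{g}}(\lambda'))$, realise $\mathcal{M}_{\lambda}\otimes_{U(\mathtt{g})}{}_-$ up to finite-dimensional error as the wall-crossing functor $\mathrm{T}_{\lambda'}$, then push through. The two genuine variations you take are both reasonable. First, you establish indecomposability of $X:=L(\mathcal{V}(\lambda))_{\overline{0}}$ by the explicit computation $E\overline{F}\overline{v}=(1-\mathbf{i})\sqrt{t}\,v\neq 0$, which is a cleaner and more concrete route than the paper's dimension-count/contradiction argument; your computation is correct ($[E,\overline{F}]=\overline{H}_1-\overline{H}_2$, and the $\lambda'$-weight space of $X$ has $1$-dimensional $E$-kernel spanned by $Fv$, so the extension of $M^{\mathtt{g}}(\lambda')$ by $M^{\mathtt{g}}(\lambda)$ cannot split). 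Second, you resolve $L^{\mathtt{g}}(\lambda')=M^{\mathtt{g}}(\lambda')$ using the Verma sequence $0\to M^{\mathtt{g}}(\lambda')\to M^{\mathtt{g}}(\lambda)\to L^{\mathtt{g}}(\lambda)\to 0$, which has the pleasant feature of directly producing the bimodule $\mathcal{L}'_{\lambda}$ already set up in the paper; the paper instead uses the sequence with the antidominant projective Verma $M^{\mathtt{g}}(-\lambda'-\alpha)$, requiring them to introduce a fresh projective bimodule. Both middle terms are projective, so the two choices are interchangeable.

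There is, however, a gap in your part (ii). You write that the finite-dimensional factors vanish upon tensoring with $L$, ``producing an identification $\mathcal{L}'_{\lambda}\otimes_{U(\mathtt{g})}L\cong\mathcal{M}_{\lambda}\otimes_{U(\mathtt{g})}L$.'' But after noting $\mathcal{L}_{\lambda}=0$ and $B\otimes_{U(\mathtt{g})}L=0$, what the Lemma~\ref{lem555}-style argument actually produces is a surjection $\mathcal{L}'_{\lambda}\otimes_{U(\mathtt{g})}L\twoheadrightarrow\mathcal{M}_{\lambda}\otimes_{U(\mathtt{g})}L$ with kernel a quotient of $\mathrm{Tor}_1^{U(\mathtt{g})}(B,L)$, which is only known to be finite-dimensional, not zero. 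To conclude this kernel vanishes you must use, as the paper does, that the projective functor $\mathrm{T}_{\lambda'}\cong\mathcal{L}'_{\lambda}\otimes_{U(\mathtt{g})}{}_-$ is self-adjoint and kills finite-dimensional modules, hence $\mathrm{T}_{\lambda'}L$ has no nonzero finite-dimensional submodules. This is a short step, but it is load-bearing and you have omitted it. Similarly, in your sketch of part (i) the phrase ``any proper subsupermodule would have the full $Y$ as $\mathtt{g}$-restriction'' cannot be right as stated — a proper subsupermodule restricts to a proper $\mathtt{g}$-submodule of $Y\oplus Y$ (typically containing the socle $L\oplus L$), not all of it; the paper is itself silent on the details of (i), so I will only flag this as imprecise wording rather than a confirmed error.

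Your closing remark about the unexplicitness of the middle layer of $Y$ is apt and matches the paper's discussion of ``rough structure.''
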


\begin{proof}
As usual, we start with the claim \eqref{prop41.2}.
First we claim that the $\mathtt{g}$-modules 
$M(\mathcal{V}(\lambda))_{\overline{0}}$ and
$M(\mathcal{V}(\lambda))_{\overline{1}}$
are indecomposable. They are isomorphic via the action of 
$\overline{H}_1+\overline{H}_2$ as $\lambda$ is typical.
If $X:=M(\mathcal{V}(\lambda))_{\overline{0}}$ would be decomposable, then
$X\cong M^{\mathtt{g}}(\lambda)\oplus
M^{\mathtt{g}}(\lambda')$ by Lemma~\ref{lem3}, which would yield that 
$E$ annihilates all elements of weight $\lambda'$ in 
$M(\mathcal{V}(\lambda))$. As $\overline{E}$ must annihilate at least 
two such elements (since we have four linearly independent elements of
weight $\lambda'$ and only two linearly independent elements of
weight $\lambda$ in $M(\mathcal{V}(\lambda))$), we would have a nonzero
highest weight vector of weight $\lambda'$ in $M(\mathcal{V}(\lambda))$,
which would contradict the fact that 
the supermodule $M(\mathcal{V}(\lambda))$ is simple (see Lemma~\ref{lem4}).
This shows that $X$ is indecomposable
and hence projective in $\mathtt{O}$ by Lemma~\ref{lem3} and 
\cite[Section~5]{Ma}.

Applying $\mathcal{L}({}_-,X)$ to the short exact sequence \eqref{eq321}
and using the same arguments as in the proof of Lemma~\ref{lem555} we
obtain an exact sequence of $U(\mathfrak{q})\text{-}U(\mathtt{g})$-bimodules
\begin{displaymath}
0\to
\mathcal{L}(M^{\mathtt{g}}(-\lambda'-\alpha),X)\to
\mathcal{L}(L^{\mathtt{g}}(\lambda'),X)\to B\to 0,
\end{displaymath}
where $B$ is finite dimensional. 

Let now $L\in \mathtt{Irr}^{\mathtt{g}}_{\lambda'}$. Similarly to the proof of 
\eqref{eq4} one shows that $B\otimes_{U(\mathtt{g})}L=0$ and hence,
tensoring the above sequence with $L$, we get
\begin{displaymath}
\mathrm{Ker}
\to\mathcal{L}(M^{\mathtt{g}}(-\lambda'-\alpha),X)\otimes_{U(\mathtt{g})}L\to
\mathcal{L}(L^{\mathtt{g}}(\lambda'),X)\otimes_{U(\mathtt{g})}L \to 0.
\end{displaymath}
Again by the same arguments as in the proof of Lemma~\ref{lem555} we get that
$\mathrm{Ker}$  is finite dimensional. 

As both modules $M^{\mathtt{g}}(-\lambda'-\alpha)$ and $X$ are projective in 
$\mathtt{O}$, the functor $\mathcal{L}(M^{\mathtt{g}}(-\lambda'-\alpha),X)
\otimes_{U(\mathtt{g})}{}_-$ is a projective functor by \cite[3.3]{BG},
more precisely, the translation functor $\mathrm{T}_{\lambda'}$ through 
the wall. As $\mathrm{T}_{\lambda'}$ is self-adjoint and annihilates 
finite dimensional modules, the image of $\mathrm{T}_{\lambda'}$ does 
not contain any nontrivial finite dimensional  submodules, which yields $\mathrm{Ker}=0$ and thus
$\mathcal{L}(L^{\mathtt{g}}(\lambda'),X)
\otimes_{U(\mathtt{g})}L\cong \mathrm{T}_{\lambda'} L=:Y$.

As $L$ is simple, the standard properties of $\mathrm{T}_{\lambda'}$ (see e.g. 
\cite[3.6]{GJ}) say that  $\mathrm{T}_{\lambda'} L$  has a simple socle 
isomorphic to $L$, a  simple top isomorphic to $L$, and that 
$\mathrm{T}_{\lambda'}$ kills the homology of the complex 
$L\hookrightarrow Y\twoheadrightarrow L$, which means that
this homology is finite dimensional. The claim \eqref{prop41.2} follows.

The proof of the claim \eqref{prop41.1} is similar 
to the proof of Proposition~\ref{prop10}\eqref{prop10.1}
and Proposition~\ref{prop21}\eqref{prop21.1}. 
\end{proof}

As we see, Proposition~\ref{prop41} does not describe the structure
of the module $\mathrm{Res}\, N$, $N\in\mathrm{Irr}_{\lambda}$ for
typical singular $\lambda$ completely (in the sense that the homology 
of the sequence from Proposition~\ref{prop41}\eqref{prop41.2} does
heavily depend on the choice of the module $L$). The information about
$\mathrm{Res}\, N$, which is obtained in Proposition~\ref{prop41},
is known as the {\em rough structure} of the module
$\mathrm{Res}\, N$, see \cite{KM,MS} for details. 

\subsection{Parity change}\label{s3.8} 

In this subsection we prove the last part of our main 
Theorem~\ref{mthm1}.

\begin{proposition}\label{prop51}
For every strongly typical or atypical simple 
$\mathfrak{q}$-\-su\-per\-mo\-du\-le $N$ we have $N\not\cong \Pi N$. 
For every typical simple $\mathfrak{q}$-supermodule $N$, which is 
not strongly typical, we have $N\cong \Pi N$. 
\end{proposition}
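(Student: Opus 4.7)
My plan is to reduce Proposition~\ref{prop51} to the parity behavior of $\mathcal{V}(\lambda)$, governed by Lemma~\ref{lem1}\eqref{lem1.2}. The starting observation is that for any simple $\mathfrak{h}$-supermodule $V$, $L(V)\cong\Pi L(V)$ if and only if $V\cong\Pi V$: the forward direction restricts an isomorphism to the highest weight space (which is $V$), while the backward direction extends via the Verma construction and uniqueness of the simple top. By Lemma~\ref{lem1}\eqref{lem1.2} this holds for $V=\mathcal{V}(\lambda)$ precisely when $\lambda_1\lambda_2=0$ and $\lambda\neq 0$, which is exactly the typical but not strongly typical case.

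For $\lambda$ typical but not strongly typical, each classifying bimodule $B$ in the relevant classification (one of $\mathcal{L}_{\lambda}$, $\mathcal{M}_{\lambda}$, or $\mathcal{L}'_{\lambda}$, from Propositions~\ref{prop21}, \ref{prop31}, \ref{prop32}, or \ref{prop41}) has the form $\mathcal{L}(X,L(\mathcal{V}(\lambda)))$ for some $\mathtt{g}$-module $X$. Functoriality of $\mathcal{L}(X,{-})$ combined with $L(\mathcal{V}(\lambda))\cong\Pi L(\mathcal{V}(\lambda))$ then yields $B\cong\Pi B$. Hence for any simple $N=B\otimes_{U(\mathtt{g})}L$ from the classification, $\Pi N=\Pi B\otimes_{U(\mathtt{g})}L\cong B\otimes_{U(\mathtt{g})}L=N$.

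For $\lambda$ strongly typical or atypical: the case $\lambda=0$ is immediate from Lemma~\ref{lem9}, which exhibits $\Bbbk$ and $\Pi\Bbbk$ as the only simples with annihilator $\mathcal{I}_{0}$. Otherwise $\lambda_1\lambda_2\neq 0$, so $L(\mathcal{V}(\lambda))\not\cong\Pi L(\mathcal{V}(\lambda))$ by the starting observation. Assume for contradiction $N\cong\Pi N$ for some simple $N\in\mathrm{Irr}_\lambda$; the super version of Schur's lemma then produces a nonzero odd $\phi\in\mathrm{End}_{\mathfrak{q}}(N)$. Writing $N=B\otimes_{U(\mathtt{g})}L$ via one of Propositions~\ref{prop10}, \ref{prop21}, \ref{prop31}, \ref{prop32}, \ref{prop41}, the standard adjunction
\begin{displaymath}
\mathrm{End}_{\mathfrak{q}}(N)\cong\mathrm{Hom}_{\mathfrak{q}\text{-}\mathtt{g}}(B,\mathcal{L}(L,N))
\end{displaymath}
transports $\phi$ to a nonzero odd bimodule morphism $B\to\mathcal{L}(L,N)$. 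Applying the equivalence of Proposition~\ref{propHC} (with an appropriate auxiliary weight $\mu$, and invoking Lemma~\ref{lem555} with $\mathcal{L}'_\lambda$ in the cases $\lambda_1-\lambda_2\in\{-2,-3,\dots\}$), this bimodule morphism translates to a nonzero odd supermodule morphism $L(\mathcal{V}(\lambda))\to L(\mathcal{V}(\lambda))$. Restricting to the highest weight space yields an odd $\mathfrak{h}$-supermodule map of $\mathcal{V}(\lambda)$, which gives $\mathcal{V}(\lambda)\cong\Pi\mathcal{V}(\lambda)$, contradicting Lemma~\ref{lem1}\eqref{lem1.2}.

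The main obstacle lies in the last step: to check that the Harish-Chandra bimodule $\mathcal{L}(L,N)$ corresponds under the equivalence of Proposition~\ref{propHC} (or its Lemma~\ref{lem555} modification) to the supermodule $L(\mathcal{V}(\lambda))$ in a way that sends odd bimodule endomorphisms to odd supermodule endomorphisms. Because the classifying bimodule $B$, the reference $\mathtt{g}$-module $L$, and the auxiliary weight $\mu$ all vary with the subcase, the bookkeeping must be carried out separately for each of the cases treated in Propositions~\ref{prop10}--\ref{prop41}, with particular care in the cases $\lambda_1-\lambda_2\in\{-2,-3,\dots\}$ where $\mathcal{L}'_\lambda$ must take the place of $\mathcal{L}_\lambda$.
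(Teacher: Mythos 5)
Your reduction to the parity behavior of $\mathcal{V}(\lambda)$ via Lemma~\ref{lem1}\eqref{lem1.2} is a correct and pleasant observation, and your treatment of the typical-but-not-strongly-typical case — $L(\mathcal{V}(\lambda))\cong\Pi L(\mathcal{V}(\lambda))$ forces $B\cong\Pi B$ for the classifying bimodule $B=\mathcal{L}(X,L(\mathcal{V}(\lambda)))$ and hence $N\cong\Pi N$ — is essentially the paper's own.

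For the strongly typical and nonzero atypical cases, however, your argument genuinely stops at the step you flag, and what remains is not bookkeeping but the substance of the proof. To turn the odd endomorphism of $N$ into an odd endomorphism of $L(\mathcal{V}(\lambda))$ you would need (a) that $\mathcal{L}(L,N)$ itself, not merely $B$, is sent by the equivalence of Proposition~\ref{propHC} to $L(\mathcal{V}(\lambda))$; (b) a discussion of how that equivalence carries the internal $\mathbb{Z}/2$-grading — the morphism spaces in $\mathcal{H}^1_\lambda$ and $\mathcal{O}_\lambda$ are by the paper's convention purely even, so the ``odd morphism'' you produce lives outside those categories and its transport needs a separate argument; and (c) in the cases $\lambda_1-\lambda_2\in\{-2,-3,\dots\}$ (which do occur for atypical $\lambda$), where $\mathcal{L}_\lambda\neq\mathcal{L}'_\lambda$ and Proposition~\ref{propHC} is not directly applicable, an argument that the finite-dimensional error coming from Lemma~\ref{lem555}\eqref{lem555.2} does not obstruct the transport. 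None of this is supplied, so this half of the proposal is a plan rather than a proof.

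The paper's proof of precisely these two cases is much shorter and bypasses Harish-Chandra bimodules altogether, working with explicit elements of $U(\mathfrak{q})$. For nonzero atypical $\lambda$ it multiplies by the odd element $\overline{H}_1+\overline{H}_2$, which squares to $H_1+H_2=0$ on $N$; hence of the two induced $\mathtt{g}$-maps $N_{\overline{0}}\to N_{\overline{1}}$ and $N_{\overline{1}}\to N_{\overline{0}}$ one is an isomorphism and the other is zero, and $\Pi$ interchanges them, so $N\not\cong\Pi N$. For strongly typical $\lambda$ it uses the anticentral element $T_{\mathfrak{q}}$ of \cite[Section~10]{Go2}, which acts by a scalar $\tau$ on $N_{\overline{0}}$ and by $-\tau$ on $N_{\overline{1}}$, with $\tau\neq 0$ by \cite[Theorem~10.3]{Go2}; parity change flips the sign, so again $N\not\cong\Pi N$. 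You should either adopt one of these explicit-element arguments or carry out the transport of the odd endomorphism carefully and case by case.
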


\begin{proof}
The claim is trivial for the trivial supermodule $N$ and the
corresponding $\Pi N$. Let us assume first that $N$ is a nontrivial 
atypical supermodule and consider the action of  the element 
$\overline{H}_1+\overline{H}_2$ on $N$. As we have seen in 
Subsection~\ref{s3.4}, this action defines 
a $\mathtt{g}$-homomorphism from $N_{\overline{0}}$ to 
$N_{\overline{1}}$, and a $\mathtt{g}$-homomorphism from 
$N_{\overline{0}}$ to  $N_{\overline{1}}$. One of these homomorphisms is
zero while the other one is an isomorphism. Changing the parity swaps
these two maps and proves the claim in the case of atypical supermodules.

For strongly typical supermodules we can distinguish $N$ and $\Pi N$ via
the action of the anticenter of $U(\mathfrak{q})$. By \cite[Section~10]{Go2},
the algebra $U(\mathfrak{q})$ contains a unique up to scalar element
$T_{\mathfrak{q}}\in U(\mathfrak{q})_{\overline{0}}$, 
which commutes with all elements from $U(\mathfrak{q})_{\overline{0}}$ 
and anticommutes with all elements from $U(\mathfrak{q})_{\overline{1}}$. 
In particular, this elements acts as a scalar, say $\tau$, on 
the $U(\mathfrak{q})_{\overline{0}}$-module $N_{\overline{0}}$ and 
thus as the scalar $-\tau$ on the
$U(\mathfrak{q})_{\overline{0}}$-module $N_{\overline{1}}$.
If $N$ is strongly typical then $\tau\neq 0$ by 
\cite[Theorem~10.3]{Go2}, which yields that $T_{\mathfrak{q}}$ acts
with the eigenvalue $-\tau\neq \tau$ on $(\Pi N)_{\overline{0}}$, implying 
$N\not\cong\Pi N$.

Let now $N$ be typical but not strongly typical with annihilator
$\mathcal{I}_{\lambda}$. Then from Subsection~\ref{s3.4} we have that 
$\mathcal{V}(\lambda)\cong \Pi\mathcal{V}(\lambda)$, which yields
\begin{displaymath}
\mathcal{L}(L,\mathcal{V}(\lambda))\cong 
\mathcal{L}(L,\Pi\mathcal{V}(\lambda))\cong \Pi
\mathcal{L}(L,\mathcal{V}(\lambda)) 
\end{displaymath}
for any $\mathtt{g}$-module $L$. From Lemma~\ref{lem9} and the proofs of 
Propositions~\ref{prop21}, \ref{prop31}, \ref{prop32} and \ref{prop41}
we have that $N$ either has the form
$\mathcal{L}(L,\mathcal{V}(\lambda))\otimes_{U(\mathtt{g})}L$
or $\Pi\mathcal{L}(L,\mathcal{V}(\lambda))\otimes_{U(\mathtt{g})}L$
for some simple $\mathtt{g}$-module $L$. The claim of the proposition
follows.
\end{proof}

Now we are ready to prove our first main theorem, namely
Theorem~\ref{mthm1}.

\begin{proof}[Proof of Theorem~\ref{mthm1}.]
The claim Theorem~\ref{mthm1}\eqref{mthm1.1} is Proposition~\ref{prop8}.
The claim Theorem~\ref{mthm1}\eqref{mthm1.2}
follows from Lemma~\ref{lem9} and  Propositions~\ref{prop10},
\ref{prop21}, \ref{prop31}, \ref{prop32} and \ref{prop41}.   
The claim Theorem~\ref{mthm1}\eqref{mthm1.3} is Proposition~\ref{prop51}.
\end{proof}

\subsection{Weight supermodules}\label{s3.9} 

Weight supermodules form a very special and important class of supermodules.
It is easy to see that all bijections between simple 
$\mathfrak{q}$-supermodules and $\mathtt{g}$-modules, described in 
Lemma~\ref{lem9} and  Propositions~\ref{prop10}, \ref{prop21}, \ref{prop31}, 
\ref{prop32} and \ref{prop41}, restrict to the corresponding subclasses of
weight (super)modules. For a classification of simple weight
$\mathtt{g}$-module we refer the reader to \cite[Chapter~3]{Ma}.
In this subsection we present an alternative
approach to the classification of simple weight 
$\mathfrak{q}$-supermodules using the coherent families
approach  from \cite{Mat}
(see \cite[Section~3.5]{Ma} for the corresponding arguments
in the case of $\mathtt{g}$-modules). For some other Lie superalgebras
analogous approach can be found in \cite{Gr}. 

For $z\in\Bbbk$ denote by ${}^zU'$ the 
$U(\mathfrak{q})\text{-}U(\mathfrak{q})$ bimodule $U'$, where 
the right action of $U(\mathfrak{q})$ is given by the usual multiplication,
while left action of $U(\mathfrak{q})$ is given by 
multiplication twisted by $\theta_z$.

\begin{proposition}\label{prop61}
Every simple weight $\mathfrak{q}$-supermodule is isomorphic to
one of the following supermodules:
\begin{enumerate}[(a)]
\item\label{prop61.1} Simple finite dimensional supermodule.
\item\label{prop61.2} Simple infinite dimensional highest weight
supermodule.
\item\label{prop61.3} Simple infinite dimensional lowest weight
supermodule.
\item\label{prop61.4} A simple supermodule of the form
$L^z:={}^{z}U'\otimes_{U(\mathfrak{q})}L$ for some $z\in\Bbbk$, 
where $L$ is a simple infinite dimensional highest weight supermodule.
\end{enumerate}
\end{proposition}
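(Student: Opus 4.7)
The plan is to carry out a Mathieu-style case analysis based on whether the even root vectors $E$ and $F$ act locally nilpotently on $N$, and to invoke the twisting functors $\Theta_{z}$ from Subsection~\ref{s2.55} in the generic case. A useful preliminary observation is that $\overline{E}^{\,2}=\overline{F}^{\,2}=0$ in $U(\mathfrak{q})$ (since a direct computation in the matrix realization shows $[\overline{E},\overline{E}]=[\overline{F},\overline{F}]=0$), so local nilpotence of $E$ (resp.\ $F$) on $N$ is equivalent to local nilpotence of the entire subsuperalgebra $\mathfrak{n}$ (resp.\ $\mathfrak{m}$).

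Suppose first that $E$ acts locally nilpotently on $N$. Since $[E,\overline{E}]=0$ and $\overline{E}^{\,2}=0$, the space $N^{\mathfrak{n}}$ of $\mathfrak{n}$-invariants is nonzero: any nonzero element of $\ker E$ lies in an $\overline{E}$-stable subspace on which $\overline{E}$ still satisfies $\overline{E}^{\,2}=0$, producing a common zero of $E$ and $\overline{E}$. As $N^{\mathfrak{n}}$ is $\mathfrak{h}_{\overline{0}}$-stable, it contains a nonzero weight vector $v$; the cyclic $U(\mathfrak{h})$-supermodule $U(\mathfrak{h})v$ is at most four-dimensional (from the relations $\overline{H}_{i}^{\,2}=H_{i}$ and $\overline{H}_{1}\overline{H}_{2}+\overline{H}_{2}\overline{H}_{1}=0$ in $U(\mathfrak{h})$), so it contains a simple $\mathfrak{h}$-subsupermodule $V$. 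Standard adjunction together with simplicity of $N$ then gives $N\cong L(V)$, placing $N$ in class~(a) or~(b). The symmetric argument (interchanging $\mathfrak{n}$ and $\mathfrak{m}$) shows that if $F$ acts locally nilpotently then $N$ is a simple lowest weight supermodule, in class~(a) or~(c).

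Assume now that neither $E$ nor $F$ acts locally nilpotently on $N$. I would first show that $F$ acts injectively on $N$: because $\mathrm{ad}(F)$ is locally nilpotent on the finite-dimensional $\mathfrak{q}$, the locally $F$-nilpotent vectors in $N$ form a $\mathfrak{q}$-subsupermodule, which by simplicity and our hypothesis must be zero. Consequently $N$ embeds into $M:=U'\otimes_{U(\mathfrak{q})}N$, a simple $U'$-supermodule. The task is then to produce $z\in\Bbbk$ with $\Theta_{-z}(N)\neq 0$: given such a $z$, the first case above, applied to the $U(\mathfrak{q})$-supermodule $\Theta_{-z}(N)$ (on which $E$ acts locally nilpotently by construction), produces a simple highest weight subsupermodule $L$, which is necessarily infinite-dimensional since $F$ acts injectively on $M$. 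Simplicity of $M$ as a $U'$-supermodule and the Ore property then allow us to reconstruct $N\cong L^{z}={}^{z}U'\otimes_{U(\mathfrak{q})}L$, placing $N$ in class~(d).

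The main obstacle is precisely the existence of this twisting parameter $z$, which is the heart of the coherent-families machinery of \cite{Mat}. The point is that, via the family $\{\theta_{z}\}_{z\in\Bbbk}$, the action of $E$ on each $\mathfrak{h}_{\overline{0}}$-weight space of $M$ depends polynomially on $z$ (see \cite[Lemma~4.3]{Mat}); a Vandermonde-type argument, analogous to the one used in the proof of Proposition~\ref{prop5-new}, then selects a value of $z$ at which this polynomial family acquires a nonzero kernel on some weight space, producing the required nonzero element of $\Theta_{-z}(N)$.
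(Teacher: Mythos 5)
Your proposal is correct and takes essentially the same route as the paper: the trichotomy on local nilpotence of $E$ and $F$, the use of $\overline{E}^{\,2}=0$ together with $[E,\overline{E}]=0$ to upgrade a vector in $\ker E$ to a common zero of $E$ and $\overline{E}$ (hence a highest weight vector), and twisting by $\theta_{z}$ via Mathieu's coherent-family machinery in the dense case. One minor remark: pinning down the value $z$ with $\Theta_{-z}(N)\neq 0$ ultimately rests on a determinant/polynomial-vanishing argument from \cite{Mat} rather than on a Vandermonde-inversion argument of the kind used in the proof of Proposition~\ref{prop5-new}; this step is also left implicit in the paper's own proof, so you are in the same position.
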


The supermodules described in Proposition~\ref{prop61}\eqref{prop61.4}
are called {\em dense} supermodules, see \cite[Chapter~3]{Ma}.

\begin{proof}
Let $N$ be a simple weight $\mathfrak{q}$-supermodule. Assume first 
that $N$ contains a nonzero vector $v$ of weight $\lambda$ such that
$E(v)=0$. If we have $\overline{E}(v)=0$, then $v$ is a highest weight vector.
Using the universal property of Verma supermodules we get an epimorphism
from either $M(\mathcal{V}(\lambda))$ or $M(\Pi\mathcal{V}(\lambda))$
to $N$ and hence $N\cong L(\mathcal{V}(\lambda))$
or $N\cong L(\Pi\mathcal{V}(\lambda))$.
If $\overline{E}(v)=w\neq 0$, we have $\overline{E}(w)=0$
as $\overline{E}^2=0$ and $E(w)=E\overline{E}(w)=\overline{E}E(w)=0$.
Hence $w$ is a highest weight vector and as above we get that 
$N$ is  a highest weight supermodule (this supermodule might 
be finite dimensional).

Similarly if $N$ contains a nonzero weight vector $v$ such that 
$F(v)=0$, we obtain that $N$ is a lowest weight supermodule.

Assume, finally, that both $E$ and $F$ act injectively on $N$.
As $\mathrm{Res}_{\mathtt{g}}^{\mathfrak{q}}\, N$ is a 
$\mathtt{g}$-module of finite length,
from \cite[Chapter~3]{Ma} we get that all weight spaces of 
$N$ are finite dimensional and hence both $E$ and $F$ act, in fact,
bijectively on $N$. Thus we can lift the $U(\mathfrak{q})$-action on
$N$ to a $U'$-action and twist the later by $\theta_z$ for any
$z\in \Bbbk$. Denote the obtained $U'$-supermodule (and also its restriction
to $U(\mathfrak{q})$) by $N^{z}$. By polynomiality of $\theta_z$ we get 
that for some $z\in \Bbbk$ the supermodule $N^{z}$ contains a non-zero
weight element $v$, annihilated by $E$. By the same arguments as in the
first paragraph of this proof, we get that $N^{z}$ contains a highest
weight vector, say $w$. Let $X$ be the $U(\mathfrak{q})$-subsupermodule of 
$N^{z}$, generated by $w$. As the action of $F$ on $N^{z}$ is injective, 
the action of $F$ on $X$ is injective as well. So, $X$ cannot be
a finite dimensional supermodule. Let $Y$ be some simple subsupermodule in the
socle of $X$. Then $Y$ is a simple infinite dimensional highest weight
supermodule (in particular, $F$ acts injectively on $Y$ as well). 

By construction, the supermodule
${}^{-z}U'\otimes_{U(\mathfrak{q})}Y$ is
a nonzero subsupermodule of $N$ and hence is isomorphic to $N$. The
claim of the proposition follows.
\end{proof}

To obtain an irredundant complete list of pairwise nonisomorphic
simple weight $\mathfrak{q}$-supermodules, one could use the
following lemma:

\begin{lemma}\label{lem52}
Let $L$ be a simple infinite dimensional highest 
weight $\mathfrak{q}$-supermodule.
\begin{enumerate}[(i)]
\item\label{lem52.1} If $z,z'\in\Bbbk$, then the supermodules
$L^z$ and ${L}^{z'}$ are isomorphic if and only if
$z-z'\in \mathbb{Z}$.  
\item\label{lem52.2}
There exists at most one coset $t+\mathbb{Z}\in\Bbbk/\mathbb{Z}$ 
such that the supermodule $L^{z}$ is not simple
if and only if $z\in \mathbb{Z}$ or $z\in t+\mathbb{Z}$.
\end{enumerate}
\end{lemma}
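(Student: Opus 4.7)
The plan is to handle the two parts of the lemma in sequence. The starting point is the explicit action of $\theta_z$ on the Cartan: from $[H_1,F]=-F$, $[H_2,F]=F$ together with the polynomiality of $\theta_z$ in $z$ one obtains $\theta_z(H_1)=H_1+z$ and $\theta_z(H_2)=H_2-z$, so a weight vector $v\in L$ of weight $\mu$ gives rise to a vector $F^{-k}\otimes v\in L^z$ of weight $\mu+(k+z)\alpha$ for every $k\geq 0$. Consequently $\mathrm{supp}(L^z)=\lambda+z\alpha+\mathbb{Z}\alpha$, where $\lambda$ is the highest weight of $L$, and this support lies entirely in a single $\mathbb{Z}\alpha$-coset in $\mathfrak{h}^*_{\overline{0}}$ determined by $z$ modulo $\mathbb{Z}$.

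Part~(i) follows at once. The ``only if'' direction is the support comparison, giving $(z-z')\alpha\in\mathbb{Z}\alpha$. For the ``if'' direction, set $n:=z-z'\in\mathbb{Z}$ and define $\phi\colon L^z\to L^{z'}$ by $\phi(u\otimes v):=F^{-n}u\otimes v$, which is well-defined on the tensor product and bijective since $F^n$ is a unit in $U'$. The $U(\mathfrak{q})$-intertwining property reduces to the identity $F^{-n}\theta_z(x)=\theta_{z'}(x)F^{-n}$, which follows from the group law $\theta_{z'}=\theta_{-n}\circ\theta_z$ together with the fact that, for integer $n$, the automorphism $\theta_{-n}$ is conjugation by $F^{-n}$.

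For Part~(ii), I first treat $z\in\mathbb{Z}$ via Part~(i) to reduce to $z=0$: the canonical map $L\hookrightarrow L^0=U'\otimes_{U(\mathfrak{q})}L$, $v\mapsto 1\otimes v$, is injective because $F$ acts injectively on $L$ (by Lemmata~\ref{lem3} and \ref{lem4}, $\mathrm{Res}\,L$ is either $M^{\mathtt{g}}(\lambda)$ or a length-two Verma extension, and $F$ is injective on both), and its image is proper because $F^{-1}\otimes v$ has weight $\lambda+\alpha\notin\mathrm{supp}(L)$ for $v$ a nonzero top weight vector. Thus $\mathbb{Z}\subseteq S:=\{z\in\Bbbk:L^z\text{ is not simple}\}$. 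For $z\notin\mathbb{Z}$, assume $L^z$ is not simple and (granting the existence claim discussed below) fix a $\mathfrak{q}$-highest weight vector $v$ of weight $\mu$ in a proper submodule. Centrality of $H_1+H_2$ forces $\mu_1+\mu_2=\lambda_1+\lambda_2$. The Casimir $\mathtt{c}=(H_1-H_2+1)^2+4FE$ acts on $v$ as the scalar $(\mu_1-\mu_2+1)^2$, and by Lemmata~\ref{lem3} and \ref{lem4} its eigenvalues on $L$ lie in $\{(\lambda_1-\lambda_2+1)^2,(\lambda_1-\lambda_2-1)^2\}$ (with only the first occurring in the atypical case), so $(\mu_1-\mu_2+1)^2\in\{(\lambda_1-\lambda_2\pm 1)^2\}$. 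Solving these two scalar constraints produces at most four candidate weights $\mu\in\{\lambda,\,\lambda-\alpha,\,(\lambda_2,\lambda_1),\,(\lambda_2-1,\lambda_1+1)\}$: the first two lie in $\mathrm{supp}(L)\subseteq\lambda+\mathbb{Z}\alpha$ and hence belong to $\mathrm{supp}(L^z)$ only for $z\in\mathbb{Z}$, while the remaining two differ by $\alpha$ and so determine a single $\mathbb{Z}\alpha$-coset, forcing $z\in t+\mathbb{Z}$ for a unique $t\equiv -(\lambda_1-\lambda_2)\pmod{\mathbb{Z}}$.

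The main obstacle is justifying the existence of a $\mathfrak{q}$-highest weight vector inside a proper submodule of $L^z$ when $z\notin\mathbb{Z}$: a priori such a submodule could have support unbounded above along the $\alpha$-chain. My plan is to use that $F$ acts bijectively on the $U'$-module $L^z$, so all weight multiplicities of $L^z$ coincide with some finite $d$; consequently $\mathrm{Res}\,L^z$ is a weight $U(\mathtt{g})$-module supported in a single $\mathbb{Z}\alpha$-coset with multiplicities bounded by $d$, hence of finite length by the classification of torsion-free weight $\mathfrak{sl}_2$-modules in \cite[Chapter~3]{Ma}. A proper $\mathfrak{q}$-submodule $N\subsetneq L^z$ then either admits a $\mathtt{g}$-highest weight vector $w$, from which $w$ or $\overline{E}w$ yields a $\mathfrak{q}$-highest weight vector (using $[E,\overline{E}]=0$ in $\mathfrak{q}$ and $\overline{E}^2=0$), or all its simple $\mathfrak{q}$-subquotients are dense; the latter case is then eliminated by matching primitive ideals via Proposition~\ref{prop5-new}, which confines the twist parameter of any dense subquotient to the same coset $t+\mathbb{Z}$ identified above.
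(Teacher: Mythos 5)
Your Part~(i) is correct and is essentially the paper's argument, just written out more explicitly for general integer difference $n=z-z'$ rather than reducing to $n=1$; both ultimately rest on the observation that left multiplication by a power of $F$ intertwines two consecutive twists.

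Your Part~(ii) is also on the right track: the weight and Casimir constraints on a putative $\mathfrak{q}$-highest weight vector $v$ are exactly what the paper invokes (``the eigenvalue of $\mathtt{c}$ is not affected by $\theta_z$ and is given by a quadratic polynomial''), and your explicit list of the four candidate weights $\{\lambda,\ \lambda-\alpha,\ (\lambda_2,\lambda_1),\ (\lambda_2-1,\lambda_1+1)\}$ is correct. The real issue is the step you yourself flag as the ``main obstacle.'' Your proposed dichotomy (\emph{either} $N$ has a $\mathtt{g}$-highest weight vector \emph{or} all simple $\mathfrak{q}$-subquotients of $N$ are dense) is not exhaustive as stated, and, more importantly, your proposed disposal of the second branch via ``matching primitive ideals'' through Proposition~\ref{prop5-new} does not go through: $\mathcal{I}_\lambda$ being contained in the annihilator of a dense subquotient $Q$ does not pin down the twist parameter of $Q$, since the highest weight module underlying $Q$ and its own twist parameter are both unknowns, and Proposition~\ref{prop5-new} constrains neither independently. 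So as written there is a genuine gap.

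The paper closes this gap with a single, cleaner observation which you should replace your argument by. If $X$ is any proper subsupermodule of $L^z$, then $F$ \emph{cannot} act bijectively on $X$: otherwise $X$ extends to a $U'$-submodule, and untwisting gives a $U'$-submodule $X'\subsetneq L^0=U'\otimes_{U(\mathfrak q)}L$ with constant weight multiplicities; since the weight multiplicities of $L$ stabilize to those of $L^0$ far enough down the $\alpha$-chain, a character comparison forces $L\cap X'\neq 0$, hence $L\subseteq X'$ by simplicity of $L$, hence $X'=L^0$ and $X=L^z$, a contradiction. Granting this, on a proper $X$ the operator $F$ is injective (inherited from $L^z$) but not surjective, so for some weight $\mu$ we have $FX_{\mu+\alpha}\subsetneq X_\mu$; then $FE$ maps $X_\mu$ into this proper subspace, so $FE$ is not surjective and therefore (by finite dimensionality of $X_\mu$) not injective, producing a nonzero $v\in X_\mu$ with $FEv=0$ and hence $Ev=0$. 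As in the proof of Proposition~\ref{prop61}, either $v$ or $\overline{E}v$ is then a $\mathfrak{q}$-highest weight vector in $X$, and the Casimir constraint you computed finishes the argument. This also automatically disposes of the ``all dense'' scenario you were worried about, since that scenario would make $F$ bijective on a proper submodule.

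Two small secondary points. First, what you need is a dichotomy on the socle of $N$, not on arbitrary subquotients: a simple subsupermodule of $N$ inherits injectivity of $F$ from $L^z$, so (by Proposition~\ref{prop61}) it can only be highest weight or dense; but subquotients in general can be lowest weight or finite dimensional, so your stated dichotomy is formally incomplete. Second, your appeal to the ``classification of torsion-free weight $\mathfrak{sl}_2$-modules'' for finite length of $\mathrm{Res}\,L^z$ is not quite right, since $\mathrm{Res}\,L^z$ need not be torsion-free; finite length instead follows because $\mathrm{Res}\,L$ has finite length (Lemmata~\ref{lem3}, \ref{lem4}) and localization followed by a twist of the action preserves this.
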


\begin{proof}
If $z,z'\in\Bbbk$ are such that $z-z'\not\in \mathbb{Z}$, then 
the supermodules ${L}^{z}$ and $L^{z'}$ have differrent weights
and hence are not isomorphic. If $z-z'\in \mathbb{Z}$, then 
to prove that $L^z$ and ${L}^{z'}$ are isomorphic it is enough
to check that $L^0\cong L^1$. In the latter case it is easy to check
that the map  $v\mapsto F(v)$ from $L^0$ to $L^1$ is an 
isomorphism of $\mathfrak{q}$-supermodules. This proves the claim
\eqref{lem52.1}.

To prove the claim \eqref{lem52.2} we first observe 
that if $X$ is a proper subsupermodule of  ${L}^{z}$, then $F$ cannot 
act bijectively on $X$. Indeed, if $F$ would act bijectively on $X$, 
then,  comparing the characters, we would have that  
$L\cap {}^{-z}U'\otimes_{U(\mathfrak{q})}X$ would be a proper subsupermodule 
of $L$, which is not possible as $L$ is simple.

But if $F$ does not act bijectively, then  it acts only injectively on
$X$. In this case the should exist a weight element $v\in X$, which does 
not belong  to the image of $F$ and is annihilated by $E$ 
(see \cite[Chapter~3]{Ma}). 
Similarly to the proof of Proposition~\ref{prop61} one then obtains that 
either $v$ or $\overline{E}(v)$ is a highest weight vector of $X$.
However, the eigenvalue of $\mathtt{c}$ is not affected by
$\theta_z$ and is given by a quadratic polynomial. Now the claim
\eqref{lem52.2} follows from the claim \eqref{lem52.1}.
\end{proof}

\subsection{The superalgebra $\mathfrak{pq}_2$}\label{s4.10} 

The superalgebra $\mathfrak{pq}_2$ is defined as the quotient of
the superalgebra $\mathfrak{q}$ modulo the ideal, generated by the 
central element $H_1+H_2$. Hence simple $\mathfrak{pq}_2$-supermodules are 
naturally identified with simple atypical $\mathfrak{q}$-supermodules, 
and thus a classification of all simple $\mathfrak{pq}_2$-supermodules follows
directly from Subsection~\ref{s3.3} and \ref{s3.4}.

\subsection{The superalgebra $\mathfrak{sq}_2$}\label{s4.11} 

The superalgebra $\mathfrak{sq}:=\mathfrak{sq}_2$ is defines as 
a subsuperalgebra of $\mathfrak{q}$, generated by $\mathtt{g}$ and the 
elements $\overline{E}$, $\overline{F}$, $\overline{H}_1-\overline{H}_2$. 
As $\mathfrak{sq}$ is a subsuperalgebra of $\mathfrak{q}$, we
have the natural restriction functor
\begin{displaymath}
\mathrm{Res}^{\mathfrak{q}}_{\mathfrak{sq}}:
\mathfrak{q}\text{-}\mathrm{sMod}\to
\mathfrak{sq}_2\text{-}\mathrm{sMod}.
\end{displaymath}
In this section we classify all simple $\mathfrak{sq}$-supermodules.
The classification is divided into two case, first we classify simple
typical $\mathfrak{sq}$-supermodules and then we classify simple 
atypical $\mathfrak{sq}$-supermodules.

\begin{proposition}\label{prop71}
The functor $\mathrm{Res}^{\mathfrak{q}}_{\mathfrak{sq}}$ induces
a bijection between the set of isomorphism classes (up to parity
change) of simple typical $\mathfrak{q}$-supermodules and the set of 
isomorphism classes (up to parity change) of simple typical 
$\mathfrak{sq}$-supermodules.
\end{proposition}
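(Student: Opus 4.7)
The plan is to exploit the decomposition $U(\mathfrak{q}) = U(\mathfrak{sq}) \oplus U(\mathfrak{sq}) \cdot \overline{Z}$ from PBW, where $\overline{Z} := \overline{H}_1 + \overline{H}_2$ satisfies $\overline{Z}^2 = H_1 + H_2$, $[\overline{Z}, \mathtt{g}] = 0$, and $[\overline{Z}, Y] \in \mathtt{g} \subseteq \mathfrak{sq}$ for every $Y \in \mathfrak{sq}_{\overline{1}}$. Since $U(\mathfrak{q})$ is free of rank two over $U(\mathfrak{sq})$, the induction functor $\mathrm{Ind}_{\mathfrak{sq}}^{\mathfrak{q}}$ is both the left and the right adjoint to $\mathrm{Res}^{\mathfrak{q}}_{\mathfrak{sq}}$, and both are exact. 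In the typical case, $H_1 + H_2$ acts as a nonzero scalar, so $\overline{Z}$ acts invertibly.

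First I would show that $\mathrm{Res}\, N$ is a simple $\mathfrak{sq}$-supermodule for every simple typical $\mathfrak{q}$-supermodule $N$. Let $M \subseteq \mathrm{Res}\, N$ be a simple $\mathfrak{sq}$-submodule. The bracket relations ensure $M + \overline{Z} M$ is a $\mathfrak{q}$-submodule of $N$ (for $Y \in \mathfrak{sq}_{\overline{1}}$, $Y\overline{Z} m = -\overline{Z}(Ym) + [Y, \overline{Z}] m \in \overline{Z} M + M$), so by simplicity $M + \overline{Z} M = N$. Similarly, the subset $M' := \{m \in M : \overline{Z} m \in M\}$ is an $\mathfrak{sq}$-submodule of $M$ (stability under odd $Y$ uses $\overline{Z} Ym = -Y(\overline{Z} m) + [\overline{Z}, Y] m \in M$ whenever $\overline{Z} m \in M$). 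By simplicity of $M$, either $M' = M$---in which case $M$ is $\overline{Z}$-stable, hence $\mathfrak{q}$-stable, forcing $M = N$---or $M' = 0$. The latter case would yield $M \cap \overline{Z} M = 0$ and $N = M \oplus \overline{Z} M$ as a $\mathtt{g}$-module, with $\overline{Z}$ interchanging the summands; I would rule it out via the invertibility $\overline{Z}^2 = c \neq 0$, which provides the Clifford-like square-root structure needed to construct a nonscalar $\mathfrak{q}$-endomorphism of $N$, contradicting Schur's lemma.

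For surjectivity, given a simple typical $\mathfrak{sq}$-supermodule $M$, the induced module $\mathrm{Ind}\, M$ is a $\mathfrak{q}$-supermodule of finite length (since $M$ has finite length as a $\mathtt{g}$-module by the analogue of Proposition~\ref{prop8}, and $\mathrm{Ind}$ is a finite extension). Any simple quotient $N$ of $\mathrm{Ind}\, M$ is typical (as $H_1 + H_2$ still acts by $c \neq 0$) and, by the first step, $\mathrm{Res}\, N$ is simple. Adjunction then gives $\mathrm{Hom}_{\mathfrak{sq}}(M, \mathrm{Res}\, N) \neq 0$, forcing $\mathrm{Res}\, N \cong M$ up to parity. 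For injectivity modulo parity change: if $\mathrm{Res}\, N_1 \cong \mathrm{Res}\, N_2$, adjunction produces a nonzero $\mathfrak{q}$-homomorphism $N_1 \to N_2$ (after adjusting by $\Pi$ via $\mathrm{Res} \circ \Pi \cong \Pi \circ \mathrm{Res}$), which by simplicity and Schur's lemma is an isomorphism.

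The main obstacle is eliminating the case $M' = 0$ in step one: showing that a simple typical $\mathfrak{q}$-supermodule admits no proper simple $\mathfrak{sq}$-submodule. The construction of the contradicting $\mathfrak{q}$-endomorphism hinges on the typical condition---specifically the invertibility of $\overline{Z}$---providing just enough structure to distinguish the two $\mathtt{g}$-summands $M$ and $\overline{Z} M$ in a manner compatible with the full $\mathfrak{q}$-action.
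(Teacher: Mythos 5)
Your approach is genuinely different from the paper's, and the structural part of it is clean and correct: exploiting the decomposition $U(\mathfrak{q}) = U(\mathfrak{sq}) \oplus U(\mathfrak{sq})\overline{Z}$ with $\overline{Z} = \overline{H}_1 + \overline{H}_2$, and showing that both $M + \overline{Z}M$ (a $\mathfrak{q}$-submodule of $N$) and $M' := \{m \in M : \overline{Z}m \in M\}$ (an $\mathfrak{sq}$-submodule of $M$) behave as you claim, is a nice piece of Lie-theoretic bookkeeping. The dichotomy $M' = M$ or $M' = 0$ and the conclusion $M = N$ in the first case are correct. The paper instead establishes simplicity of $\mathrm{Res}^{\mathfrak{q}}_{\mathfrak{sq}} N$ by first handling highest weight supermodules (where simplicity of the restricted highest weight space forces simplicity of the restriction) and then using, for the infinite-dimensional cases, the explicit $\mathtt{g}$-module structure of $N$ from Propositions~\ref{prop21}, \ref{prop31}, \ref{prop32}, \ref{prop41} together with a central-character contradiction. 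Your route, if it worked, would be shorter and more self-contained.

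However, there is a genuine gap at exactly the step you flag: ruling out $M' = 0$. You assert that the invertibility $\overline{Z}^2 = c \neq 0$ yields a nonscalar $\mathfrak{q}$-endomorphism of $N$, but do not construct it, and the obvious candidates fail. The ``sign'' map $\sigma = \mathrm{id}_M \oplus (-\mathrm{id}_{\overline{Z}M})$ anticommutes with $\overline{Z}$, so it is not a $\mathfrak{q}$-homomorphism; worse, because $\overline{Z}M$ is not $\mathfrak{sq}$-stable (for $Y \in \mathfrak{sq}_{\overline{1}}$ and $m\in M$ one has $Y\overline{Z}m = \{Y,\overline{Z}\}m - \overline{Z}Ym$ with $\{Y,\overline{Z}\}m \in M$ generally nonzero), $\sigma$ does not even commute with $\mathfrak{sq}_{\overline{1}}$. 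Nor does $\overline{Z}\sigma$, for the same reason. What you actually get from $M' = 0$ is only that $N \cong \mathrm{Ind}^{\mathfrak{q}}_{\mathfrak{sq}} M$ (the surjection $\mathrm{Ind}\, M \twoheadrightarrow N$ from adjunction must be an isomorphism by a length count), and ruling out simplicity of $\mathrm{Ind}\, M$ is not obviously easier than the original problem. It is instructive that the scenario $M' = 0$ genuinely occurs for \emph{atypical} $N$: there $\overline{Z}$ is nilpotent, $N_{\overline{0}}$ and $N_{\overline{1}}$ are $\mathfrak{sq}$-submodules, and $\mathrm{Res}^{\mathfrak{q}}_{\mathfrak{sq}} N$ has length two (this is precisely Proposition~\ref{prop72}). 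So any completion of your argument must use typicality in a way that the formal Clifford structure alone does not supply --- which is exactly where the paper falls back on the explicit $\mathtt{g}$-module descriptions. One further small caveat: your claim that $\mathrm{Ind}^{\mathfrak{q}}_{\mathfrak{sq}}$ is both left and right adjoint to $\mathrm{Res}^{\mathfrak{q}}_{\mathfrak{sq}}$ is not quite right as stated; since $\dim_{\Bbbk}(\mathfrak{q}_{\overline{1}}/\mathfrak{sq}_{\overline{1}}) = 1$ is odd, coinduction differs from induction by a parity shift, which should be tracked in the injectivity/surjectivity steps.
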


To prove Proposition~\ref{prop71} we would need the following lemma:

\begin{lemma}\label{lem75} 
\begin{enumerate}[(i)]
\item\label{lem75.1} Every simple $\mathfrak{sq}$-supermodule is 
a subsupermodule of the supermodule, induced from a simple 
$\mathtt{g}$-module.
\item\label{lem75.2} Every simple $\mathfrak{sq}$-supermodule is 
of finite length, when considered as a $\mathtt{g}$-module.
\item\label{lem75.3} Every simple $\mathfrak{sq}$-supermodule is 
a subsupermodule of the restriction of some simple 
$\mathfrak{q}$-supermodule.
\end{enumerate}
\end{lemma}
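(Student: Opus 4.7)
The plan is to establish parts \eqref{lem75.1}--\eqref{lem75.3} in sequence, reusing the strategies of Propositions~\ref{prop7} and \ref{prop8} for the first two parts and then bootstrapping via a second induction, this time from $\mathfrak{sq}$ to $\mathfrak{q}$, for the third.

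For \eqref{lem75.1} I would imitate the proof of Proposition~\ref{prop7} almost verbatim. By the PBW theorem for Lie superalgebras, $U(\mathfrak{sq})$ is free of rank $2^3=8$ over $U(\mathtt{g})$ (a basis is given by the ordered monomials in $\overline{E},\overline{F},\overline{H}_1-\overline{H}_2$), so for any simple $\mathfrak{sq}$-supermodule $N$ the $\mathtt{g}$-module $\mathrm{Res}^{\mathfrak{sq}}_{\mathtt{g}}N$ is finitely generated. Picking a minimal homogeneous generating system, quotienting by the $U(\mathtt{g})$-span of all but one generator, and then quotienting further by a maximal left ideal of $U(\mathtt{g})$ containing the annihilator of the remaining generator produces a simple $\mathtt{g}$-quotient $L$ of $\mathrm{Res}\,N$ (after a parity change of $N$ if needed). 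The Frobenius adjunction $\mathrm{Hom}_{\mathtt{g}}(\mathrm{Res}^{\mathfrak{sq}}_{\mathtt{g}}N,L)\cong \mathrm{Hom}_{\mathfrak{sq}}(N,U(\mathfrak{sq})\otimes_{U(\mathtt{g})}L)$ together with the simplicity of $N$ then forces the required embedding.

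For \eqref{lem75.2} I would follow Proposition~\ref{prop8}. Using \eqref{lem75.1}, embed $N$ into the induced supermodule $U(\mathfrak{sq})\otimes_{U(\mathtt{g})}L$ with $L$ simple; by PBW its $\mathtt{g}$-restriction is isomorphic to $\bigwedge \mathfrak{sq}_{\overline{1}}\otimes_{\Bbbk}L$. Since $L$ is holonomic by \cite{Ba3} and tensoring with a finite-dimensional module cannot raise the Gelfand--Kirillov dimension, the induced module is holonomic and hence has finite length, so the same holds for its submodule $N$.

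For \eqref{lem75.3} observe that $\mathfrak{q}=\mathfrak{sq}\oplus \Bbbk(\overline{H}_1+\overline{H}_2)$ as a direct sum of superspaces, the complement being one-dimensional odd. By PBW, $U(\mathfrak{q})$ is therefore free of rank $2$ as a left $U(\mathfrak{sq})$-module with basis $\{1,\overline{H}_1+\overline{H}_2\}$; in particular the induction $U(\mathfrak{q})\otimes_{U(\mathfrak{sq})}{}_-$ is exact and faithful. Given a simple $\mathfrak{sq}$-supermodule $N$, the supermodule $M':=U(\mathfrak{q})\otimes_{U(\mathfrak{sq})}N$ is then nonzero and cyclic over $U(\mathfrak{q})$, so Zorn's lemma applied to proper $\mathbb{Z}/2\mathbb{Z}$-graded subsupermodules of $M'$ supplies a simple quotient $M$. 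The adjunction $\mathrm{Hom}_{\mathfrak{q}}(M',M)\cong \mathrm{Hom}_{\mathfrak{sq}}(N,\mathrm{Res}^{\mathfrak{q}}_{\mathfrak{sq}}M)$ is nonzero on the canonical surjection $M'\twoheadrightarrow M$, and the simplicity of $N$ converts this into an embedding $N\hookrightarrow \mathrm{Res}^{\mathfrak{q}}_{\mathfrak{sq}}M$ (possibly after a parity change). The main technical point that needs care throughout is to ensure that every step respects the $\mathbb{Z}/2\mathbb{Z}$-grading, so that the choices of homogeneous generators, maximal subsupermodules, and simple quotients are all carried out inside the appropriate supermodule categories; once this is done the arguments parallel those already used for Propositions~\ref{prop7} and \ref{prop8}.
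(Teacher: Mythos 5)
Your proposal is correct and follows essentially the same route as the paper: parts (i) and (ii) by running the arguments of Propositions~\ref{prop7} and~\ref{prop8} with $\mathfrak{sq}$ in place of $\mathfrak{q}$, and part (iii) by inducing from $\mathfrak{sq}$ up to $\mathfrak{q}$, passing to a simple quotient, and applying Frobenius adjunction. The one small divergence is cosmetic: the paper guarantees a simple quotient of $\mathrm{Ind}^{\mathfrak{q}}_{\mathfrak{sq}}N$ by first showing it has finite length (via $\mathrm{Res}^{\mathfrak{q}}_{\mathfrak{sq}}\circ\mathrm{Ind}^{\mathfrak{q}}_{\mathfrak{sq}}N\cong N\oplus N$ and part (ii)), whereas you invoke Zorn's lemma on the cyclic module directly; both are valid. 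One minor imprecision worth noting: for exactness and faithfulness of $U(\mathfrak{q})\otimes_{U(\mathfrak{sq})}{}_-$ you should cite freeness of $U(\mathfrak{q})$ as a \emph{right} $U(\mathfrak{sq})$-module (PBW gives both, as the paper records), rather than the left-module basis you wrote.
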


\begin{proof}
The claim \eqref{lem75.1} is proved analogously to Proposition~\ref{prop7}.  The claim \eqref{lem75.2} is proved analogously to Proposition~\ref{prop8}.
By the PBW theorem the algebra $U(\mathfrak{q})$ is free of rank two 
both as a left and as a right $U(\mathfrak{sq})$-supermodule.
Hence the induction functor 
$\mathrm{Ind}^{\mathfrak{q}}_{\mathfrak{sq}}:
\mathfrak{sq}\text{-}\mathrm{sMod}\to
\mathfrak{q}\text{-}\mathrm{sMod}$ is exact and for 
any simple $\mathfrak{sq}$-supermodule $L$ we have 
\begin{displaymath}
\mathrm{Res}^{\mathfrak{q}}_{\mathfrak{sq}}\circ 
\mathrm{Ind}^{\mathfrak{q}}_{\mathfrak{sq}}\, L\cong L\oplus L.
\end{displaymath}
In particular, it follows that 
$\mathrm{Ind}^{\mathfrak{q}}_{\mathfrak{sq}}\, L$ is of finite
length as a $\mathtt{g}$-module, and hence also as a 
$\mathfrak{sq}$-supermodule.

Now let $L$ be a simple $\mathfrak{sq}$-supermodule and $N$ be
some simple quotient of $\mathrm{Ind}^{\mathfrak{q}}_{\mathfrak{sq}}\, L$.
Using the adjunction we have
\begin{displaymath}
0\neq \mathrm{Hom}_{\mathfrak{q}}
(\mathrm{Ind}^{\mathfrak{q}}_{\mathfrak{sq}}\, L,N)=
\mathrm{Hom}_{\mathfrak{sq}}
(L,\mathrm{Res}^{\mathfrak{q}}_{\mathfrak{sq}}\, N).
\end{displaymath}
The claim \eqref{lem75.3} follows.
\end{proof}

\begin{proof}[Proof of Proposition~\ref{prop71}.]
Because of Lemma~\ref{lem75}\eqref{lem75.3}, to prove the claim of
Proposition~\ref{prop71} it is enough to show that the restriction
of every typical simple $\mathfrak{q}$-supermodule $N$ to
$\mathfrak{sq}$ is a simple $\mathfrak{sq}$-supermodule.

Note that, since we consider now typical supermodules, we have that the element 
$H_1+H_2\in \mathfrak{sq}$ still induces an isomorphism between 
the $\mathtt{g}$-submodules $X_{\overline{0}}$ and $X_{\overline{1}}$
for any $\mathfrak{sq}$-supermodule $X$. 

Let us first observe that the restriction of every typical 
simple highest weight $\mathfrak{q}$-supermodule $L(\mathcal{V}(\lambda))$
($\Pi L(\mathcal{V}(\lambda)$) to $\mathfrak{sq}$ is a simple
$\mathfrak{sq}$-supermodule. From the previous paragraph we have
that the highest weight weight space remains a simple supermodule
over the Cartan subsuperalgebra after restriction. Hence if the
supermodule $\mathrm{Res}^{\mathfrak{q}}_{\mathfrak{sq}}\,
L(\mathcal{V}(\lambda))$ would be not simple, it would have to 
have a non-trivial new primitive element, that is $v\neq 0$
such that $E(v)=\overline{E}(v)=0$. However, the action of 
$E$ and $\overline{E}$ remain unchanged during restriction and hence
this is not possible.

From \cite{Mu} we thus obtain that typical primitive ideals of 
$U(\mathfrak{sq})$ are annihilators of the typical supermodules
$\mathrm{Res}^{\mathfrak{q}}_{\mathfrak{sq}}\,
L(\mathcal{V}(\lambda))$. In fact, the previous paragraph 
(and Lemma~\ref{lem75}\eqref{lem75.3}) now implies the
classification of all typical simple finite dimensional 
$\mathfrak{sq}$-supermodules (they are just restrictions of the
corresponding typical simple finite dimensional 
$\mathfrak{q}$-supermodules).

Let now $N$ be some typical simple infinite dimensional 
$\mathfrak{q}$-supermodule. The restriction 
$X:=\mathrm{Res}^{\mathfrak{q}}_{\mathfrak{sq}}\, N$ has finite length 
as a $\mathtt{g}$-module, and hence also as an $\mathfrak{sq}$-supermodule 
(since $\mathtt{g}$ is a subalgebra of $\mathfrak{sq}$).

If we assume that $0\neq Y\subsetneq X$ is a proper $\mathfrak{sq}$-subsupermodule,
then it is also a $\mathtt{g}$-submodule. From our explicit description of
the $\mathtt{g}$-module structure on $N$
(see Propositions~\ref{prop21}, \ref{prop31}, \ref{prop32} 
and \ref{prop41}) we obtain that
in this case the annihilator of $Y$ contains the element $\mathtt{c}-c$
for some $c\in\Bbbk$. However, from the first part of the proof we know 
that the element $\mathtt{c}-c$ does not annihilate the corresponding 
highest weight $\mathfrak{sq}$-supermodule and thus cannot belong to the
corresponding primitive ideal. The obtained contradiction shows that
$0\neq Y\subsetneq X$ is not possible and completes the proof.
\end{proof}

To classify atypical simple $\mathfrak{sq}$-supermodules we 
consider the Lie algebra $\mathtt{a}:=\mathfrak{sl}_2$ 
(the subalgebra of $\mathtt{g}$) as a Lie subsuperalgebra of
$\mathfrak{sq}$. In this case  we
have the natural restriction functor
\begin{displaymath}
\mathrm{Res}^{\mathfrak{sq}}_{\mathtt{a}}:
\mathfrak{sq}\text{-}\mathrm{sMod}\to
\mathtt{a}\text{-}\mathrm{Mod}.
\end{displaymath}

\begin{proposition}\label{prop72}
The functor $\mathrm{Res}^{\mathfrak{sq}}_{\mathtt{a}}$ induces
a bijection between the set of isomorphism classes of simple purely even
$\mathtt{a}$-(super)modules and the set of  isomorphism classes (up to 
parity change) of simple atypical $\mathfrak{sq}$-supermodules.
\end{proposition}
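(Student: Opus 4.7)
The plan is to show that every simple atypical $\mathfrak{sq}$-supermodule is annihilated by $H_1+H_2$ and by the odd part of $\mathfrak{sq}$, so that it descends to a simple $\mathtt{a}$-module equipped with a choice of parity. The functor $\mathrm{Res}^{\mathfrak{sq}}_{\mathtt{a}}$ then realises the claimed bijection, with inverse sending a simple $\mathtt{a}$-module $L$ to $L$ viewed as a purely even $\mathfrak{sq}$-supermodule on which the odd part and $H_1+H_2$ act by zero.

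First I would verify that $H_1+H_2$ is central in $U(\mathfrak{sq})$: the only thing to check is $[H_1+H_2,\overline{E}]=[H_1+H_2,\overline{F}]=[H_1+H_2,\overline{H}_1-\overline{H}_2]=0$, and these follow directly from the matrix definitions of the generators. By Lemma~\ref{lem75}\eqref{lem75.3} every simple $\mathfrak{sq}$-supermodule embeds in the restriction of a simple $\mathfrak{q}$-supermodule, so the scalar by which $H_1+H_2$ acts agrees with the corresponding scalar on the ambient $\mathfrak{q}$-supermodule, which in the atypical case is $0$. Hence every simple atypical $\mathfrak{sq}$-supermodule factors through the quotient $U(\mathfrak{sq})/\langle H_1+H_2\rangle$.

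Next I would compute the remaining odd-odd superbrackets of $\mathfrak{sq}$ from the matrix presentation: $[\overline{E},\overline{F}]=H_1+H_2$ and $[\overline{H}_1-\overline{H}_2,\overline{H}_1-\overline{H}_2]=2(H_1+H_2)$, while $[\overline{E},\overline{H}_1-\overline{H}_2]=[\overline{F},\overline{H}_1-\overline{H}_2]=0$ and $[\overline{E},\overline{E}]=[\overline{F},\overline{F}]=0$. In the quotient by $H_1+H_2$ every one of these brackets vanishes, so the three-dimensional odd part $V=\Bbbk\overline{E}\oplus\Bbbk\overline{F}\oplus\Bbbk(\overline{H}_1-\overline{H}_2)$ is super-abelian and the PBW theorem gives an isomorphism of super vector spaces between the quotient enveloping algebra and $U(\mathtt{a})\otimes\bigwedge V$. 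The two-sided ideal $I$ generated by $V$ satisfies $I^{4}=0$, because any product of four elements of $V$ already lies in $\bigwedge^{\geq 4}V=0$. A standard nilpotence argument (if $IN\neq 0$ for a simple $N$ then $IN=N$, hence $0=I^{4}N=N$) shows that $I$ annihilates every simple atypical $\mathfrak{sq}$-supermodule.

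It follows that any simple atypical $\mathfrak{sq}$-supermodule is a simple $U(\mathtt{a})$-supermodule, i.e., a simple $\mathtt{a}$-module equipped with one of two parities. Conversely, any simple $\mathtt{a}$-module $L$ extends to a simple $\mathfrak{sq}$-supermodule by letting $H_1+H_2$ and all odd elements act by zero; simplicity follows because any $\mathfrak{sq}$-subsupermodule of $L$ is automatically an $\mathtt{a}$-submodule. Passing to isomorphism classes up to $\Pi$ gives the announced bijection, realised by $\mathrm{Res}^{\mathfrak{sq}}_{\mathtt{a}}$. The main obstacle is the computation of odd-odd superbrackets; once these are in hand the nilpotence argument and the extension back up to $\mathfrak{sq}$ are essentially automatic, and the only genuinely delicate point is bookkeeping for the parity-change convention.
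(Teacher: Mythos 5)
Your proof is correct, but it takes a genuinely different and more elementary route than the paper.

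The paper's argument is indirect: it takes a simple atypical $\mathfrak{q}$-supermodule $N$, and uses the explicit generating set of the ideal $\mathcal{J}_{\lambda}$ obtained in Lemma~\ref{lem111} (which in turn rests on Lemma~\ref{lem11} and the Harish-Chandra bimodule machinery) to conclude that $\overline{E},\overline{F},\overline{H}_1-\overline{H}_2$ all annihilate one of the parity components $N_{\overline{0}}$ or $N_{\overline{1}}$; it then combines this with Lemma~\ref{lem75}\eqref{lem75.3} and the classification results of Section~\ref{s3.4} to get the bijection. Your argument instead works directly inside $U(\mathfrak{psq}_2)$: once one checks, as you do, that all odd-odd superbrackets in $\mathfrak{sq}_2$ land in $\Bbbk(H_1+H_2)$, the odd part $V$ of $\mathfrak{psq}_2$ is an abelian (and supercommutative) Lie ideal with $v^2=0$ for all $v\in V$, so by PBW the two-sided ideal $I=UV=VU$ it generates satisfies $I^{4}=U V^{4}=0$. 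The standard nilpotence trick then forces every simple atypical $\mathfrak{sq}_2$-supermodule to be annihilated by $V$, so simple atypical $\mathfrak{sq}_2$-supermodules are exactly simple $\mathtt{a}$-modules concentrated in one parity, and the bijection is immediate. This buys you independence from Lemma~\ref{lem111} and the Harish-Chandra bimodule techniques, and it treats the finite-dimensional and infinite-dimensional atypical cases uniformly, whereas the paper implicitly needs Lemma~\ref{lem9} for the former and Proposition~\ref{prop10}/Lemma~\ref{lem111} for the latter. One small presentational remark: the appeal to Lemma~\ref{lem75}\eqref{lem75.3} to pin down the scalar action of $H_1+H_2$ is essentially just unwinding the definition of ``atypical'' for $\mathfrak{sq}_2$-supermodules; it is harmless but could be replaced by simply stating that atypical means the central element $H_1+H_2$ acts by zero.
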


\begin{proof}
Let $N$ denote some simple atypical $\mathfrak{q}$-supermodule. Then 
from Lemma~\ref{lem111} it follows that either 
$N_{\overline{0}}$ or $N_{\overline{1}}$ is an
$\mathfrak{sq}$-subsupermodule of
$\mathrm{Res}^{\mathfrak{q}}_{\mathfrak{sq}}\, N$. Call
this subsupermodule $X$ and we have 
$U(\mathfrak{sq})_{\overline{1}}X=0$, which means that 
$X$ is just an $\mathtt{a}$-module, trivially extended to
an $\mathfrak{sq}$-supermodule. It follows also that
$\mathrm{Res}^{\mathfrak{q}}_{\mathfrak{sq}}\, N/X\cong\Pi X$.
This defines a map from the set of  isomorphism classes (up to 
parity change) of simple atypical $\mathfrak{sq}$-supermodules
to the set of isomorphism classes of simple $\mathtt{a}$-modules
(considered as simple purely even $\mathtt{a}$-supermodules).

From Lemma~\ref{lem9} and Proposition~\ref{prop10} we have that
for any simple $\mathtt{a}$-module $L$ there exists a (unique)
$\mathfrak{q}$-supermodule $N$, whose restriction to 
$\mathtt{a}$ is isomorphic to $L\oplus L$. Hence the above
map is bijective. The claim of the proposition follows.
\end{proof}

\begin{remark}\label{rem73}
{\rm
The claim of Proposition~\ref{prop51} obviously extends to
simple $\mathfrak{sq}$-supermodules.
}
\end{remark}

\subsection{The superalgebra $\mathfrak{psq}_2$}\label{s4.12} 

The superalgebra $\mathfrak{psq}_2$ is defined as the quotient of
the superalgebra $\mathfrak{sq}_2$ modulo the ideal, generated by the 
central element $H_1+H_2$. Hence simple $\mathfrak{psq}_2$-supermodules are 
naturally identified with simple atypical $\mathfrak{sq}_2$-supermodules and 
thus a classification of all simple $\mathfrak{psq}_2$-supermodules follows
directly from Proposition~\ref{prop72}.
\vspace{1cm}

\vspace{1cm}

\noindent
Department of Mathematics, Uppsala University, Box 480, SE-75106,
Uppsala, SWEDEN, e-mail: {\tt mazor\symbol{64}math.uu.se}

\end{document}